\newcommand\version{March 9, 2023}
\newtheorem{theorem}{Theorem}
\newtheorem{proposition}[theorem]{Proposition}
\newtheorem{lemma}[theorem]{Lemma}
\newtheorem{corollary}[theorem]{Corollary}
\theoremstyle{definition}
\theoremstyle{remark}
\newtheorem{remark}[theorem]{Remark}
\newcommand{\const}{\mathrm{const}\ }
\renewcommand{\epsilon}{\varepsilon}
\renewcommand{\phi}{\varphi}
\newcommand{\R}{\mathbb{R}}
\newcommand{\Sph}{\mathbb{S}}
\newcommand{\Z}{\mathbb{Z}}
\DeclareMathOperator{\dist}{dist}
\DeclareMathOperator{\dom}{dom}
\def\ce{\mathcal{E}}
\def\cL{\mathcal{L}}
\newcommand{\me}[1]{\mathrm{e}^{#1}}
\newcommand{\one}{\mathbf{1}}
\begin{document}

\date{\version}

\title[Hardy operators in a half-space]{On Sobolev norms involving\\ Hardy operators in a half-space}

\author[R. L. Frank]{Rupert L. Frank}
\address[Rupert L. Frank]{Mathematisches Institut, Ludwig-Maximilans Universit\"at M\"unchen, The\-resienstr. 39, 80333 M\"unchen, Germany, and Munich Center for Quantum Science and Technology (MCQST), Schellingstr. 4, 80799 M\"unchen, Germany, and Mathematics 253-37, Caltech, Pasa\-de\-na, CA 91125, USA}
\email{r.frank@lmu.de}

\author[K. Merz]{Konstantin Merz}
\address[Konstantin Merz]{Institut f\"ur Analysis und Algebra, Technische Universit\"at Braunschweig, Universit\"atsplatz 2, 38106 Braun\-schweig, Germany, and Department of Mathematics, Graduate School of Science, Osaka University, Toyonaka, Osaka 560-0043, Japan}
\email{k.merz@tu-bs.de}

\begin{abstract}
  We consider Hardy operators on the half-space, that is, ordinary and fractional Schr\"odinger operators with potentials given by the appropriate power of the distance to the boundary. We show that the scales of homogeneous Sobolev spaces generated by the Hardy operators and by the fractional Laplacian are comparable with each other when the coupling constant is not too large in a quantitative sense. Our results extend those in the whole Euclidean space and rely on recent heat kernel bounds.
\end{abstract}

\thanks{\copyright\, 2023 by the authors. This paper may be reproduced, in its entirety, for non-commercial purposes.\\
	Support through U.S. National Science Foundation grant DMS-1954995, by the Deutsche Forschungsgemeinschaft through Germany's Excellence Strategy, grant EXC-2111-390814868 (R.L.F.), and by the PRIME programme of the German Academic Exchange Service (DAAD) with funds from the German Federal Ministry of Education and Research (BMBF) (K.M.) is acknowledged.}

\maketitle
\tableofcontents

\section{Introduction and main result}

\subsection{Setting of the problem}

In this paper we consider the Hardy operators in a half-space, given informally by
\begin{equation}
	\label{eq:hardyops}
	L_\lambda^{(\alpha)} = (-\Delta)^{\alpha/2}_{\R^d_+} + \lambda x_d^{-\alpha}
	\qquad
	\text{in}\ L^2(\R^d_+) \,.
\end{equation}
Here and in what follows $\R^d_+=\R^{d-1}\times (0,\infty)$ and we write $x=(x',x_d)\in\R^{d-1}\times (0,\infty)$.

We are mostly interested in the fractional case $\alpha\in(0,2)$, but our results are also new in the local case $\alpha=2$. The operators $L_\lambda^{(\alpha)}$ are considered with a Dirichlet boundary condition for $\alpha=2$ and a certain analogue for $\alpha<2$. The precise meaning of $(-\Delta)^{\alpha/2}_{\R^d_+}$ will be explained in the next subsection; it is sometimes called the regional fractional Laplacian; see, e.g., \cite{Bogdanetal2003} and \cite[Section~8.4]{Kwasnicki2019}. 

The constant $\lambda$ is assumed to satisfy
$$
\lambda\geq \lambda_*
$$
where
$$
\lambda_* := -\frac{\Gamma(\frac{1+\alpha}{2})}{\pi} \left( \Gamma(\tfrac{1+\alpha}{2}) - \frac{2^{\alpha-1}\sqrt\pi}{\Gamma(1-\frac\alpha 2)} \right).
$$
Note that $\lambda_*$ depends on $\alpha$, but not on $d$, and that $\lambda_*=-\frac14$ if $\alpha=2$. Also, $\lambda_*<0$ if $\alpha\in(0,1)\cup(1,2]$ and $\lambda_*=0$ if $\alpha=1$.

The constant $\lambda_*$ plays the role of a critical coupling constant. As is well-known for $\alpha=2$ and shown by Bogdan and Dyda \cite{BogdanDyda2011} for $\alpha<2$, the constant $\lambda_*$ is the optimal constant in Hardy's inequality, which states that
$$
L_{\lambda_*}^{(\alpha)}\geq 0 \,.
$$
Our goal in this paper is to study the powers
$$
\left( L_\lambda^{(\alpha)} \right)^{s/2}
\qquad\text{with}\ s\in(0,2] \,.
$$
More precisely, we are interested in the domains of these operators (which are subspaces containing the operator domain of $L_\lambda^{(\alpha)}$) and, in particular, in the question how these domains for general $\lambda\geq\lambda_*$ compare with the domain of this operator in the case $\lambda=0$. When $\lambda>\lambda_*\neq 0$ and $s\leq 1$, it is easy to see that the domains of $( L_\lambda^{(\alpha)} )^{s/2}$ and $( L_0^{(\alpha)})^{s/2}$ coincide; see, e.g., \cite[Remark 1.2]{Franketal2021} for a similar argument. Our main interest is therefore in the case $s>1$, corresponding to subspaces between the form domain and the operator domain. In our main result (Theorem \ref{equivalencesobolev} below) we will show that, for a certain explicit range of $s$, depending on $\lambda$, the domains of $( L_\lambda^{(\alpha)} )^{s/2}$ and $( L_0^{(\alpha)})^{s/2}$ coincide.

There are several motivations for studying this question, coming both from pure mathematics and from applications to nonlinear dispersive equations and mathematical physics, and we will discuss some of them in Subsection \ref{s:motivation} below. There, we will also give references to the growing literature on the analogous question in other settings. Pioneering papers on this topic are those by Killip, Visan and Zhang \cite{Killipetal2016} and by Killip, Miao, Visan, Zhang and Zheng \cite{Killipetal2018}.


\subsection{Main result}

Before presenting our results, we will first discuss the definition of the operators \eqref{eq:hardyops} and then introduce a parameterization of the coupling constant $\lambda$ that will be important in what follows.

\subsubsection*{Definition of the operators}

Let us give the precise definition of $L_\lambda^{(\alpha)}$ as selfadjoint, nonnegative operators in the Hilbert space $L^2(\R^d_+)$. Throughout this paper, we assume that $d\geq 1$, $\alpha\in(0,2]$ and $\lambda\in[\lambda_*,\infty)$, except where explicitly stated otherwise.

For $\alpha\in(0,2)$, we consider the quadratic form
\begin{align*}
	\tfrac12\, \mathcal A(d,-\alpha) \iint_{\R_+^d\times\R_+^d} \frac{|u(x)-u(y)|^2}{|x-y|^{d+\alpha}}\,dx\,dy + \lambda \int_{\R^d_+} \frac{|u(x)|^2}{x_d^\alpha}\,dx
\end{align*}
with
\begin{align}
	\label{eq:adalpha}
	\mathcal A(d,-\alpha) := \frac{\alpha}{2^{1-\alpha}\pi^{d/2}}\ \frac{\Gamma(\frac{d+\alpha}2)}{\Gamma(1-\frac \alpha2)} \,,
\end{align}
and for $\alpha=2$ we consider the quadratic form
$$
\int_{\R^d_+} |\nabla u(x)|^2\,dx + \lambda \int_{\R^d_+} x_d^{-2}|u(x)|^2\,dx \,.
$$
These quadratic forms are considered for functions $u\in C^1_c(\R^d_+)$, that is, continuously differentiable functions whose support is a compact subset of the open set $\R^d_+$. According to the classical Hardy inequality for $\alpha=2$ and its sharp extension to $\alpha<2$ by Bogdan and Dyda \cite{BogdanDyda2011}, these quadratic forms are nonnegative if (and only if) $\lambda\in[\lambda_*,\infty)$. By a theorem of Friedrichs these forms therefore give rise to selfadjoint, nonnegative operators $L_\lambda^{(\alpha)}$ in $L^2(\R^d_+)$ for which $C^1_c(\R^d_+)$ is a form core.

The operators $(L_\lambda^{(\alpha)})^{s/2}$ appearing below are defined by the spectral theorem. We will use the fact that $C^\infty_c(\R^d_+)$ belongs to the domain of these operators for any $s\in [0,2]$ and any $\alpha\in(0,2]$; see Lemma \ref{domain}.


\subsubsection*{Definition of the exponent $p$}

For given $\alpha\in(0,2]$ (not reflected in the notation) we set $M:=\alpha$ if $\alpha<2$ and $M:=\infty$ if $\alpha=2$ and introduce the function
\begin{align}
	\label{eq:defcp}
	\begin{split}
		(-1,M)\ni p \mapsto C(p)
		& := \frac1\pi \left( \Gamma(\alpha)\, \sin\frac{\pi\alpha}2 + \Gamma(1+p)\,\Gamma(\alpha-p)\, \sin\frac{\pi(2p-\alpha)}{2} \right).
	\end{split}
\end{align}
When $\alpha=2$, one sees that the poles of $\Gamma(\alpha-p)$ cancel with the zeros of $\sin\frac{\pi(2p-\alpha)}{2}$ and, indeed, that $C(p)=p(p-1)$ for all $p>-1$. Similarly, for $\alpha=1$ one finds $C(p) = \frac1\pi (1-\pi p\cot\pi p)$.

The following properties of $C$ are known and we refer to Appendix \ref{s:defp} for details and references. The function $p\mapsto C(p)$ is continuous and symmetric with respect to $p=\frac{\alpha-1}2$, strictly increasing on $[\frac{\alpha-1}2,M)$ and its value at $p=\frac{\alpha-1}2$ is $\lambda_*$. Moreover, $\lim_{p\to M} C(p) = +\infty$. Thus, for any $\lambda\in[\lambda_*,\infty)$ there is a unique
\begin{align}
	\label{eq:defp}
	p \in [\tfrac{\alpha-1}2,M)
	\quad \text{with} \quad
  C(p) = \lambda \,.
\end{align}
We emphasize that $p$ depends on $\alpha$, besides $\lambda$.

One can show that $C(\alpha-1)=C(0)=0$. Thus, the case $\lambda=0$ corresponds to $p=(\alpha-1)_+:=\max\{\alpha-1,0\}$ and the case $\lambda>0$ to $p>(\alpha-1)_+$.

Using the explicit expression of $C(p)$ for $\alpha=2$ we see that
\begin{align}
	\label{eq:defp2}
	p =\tfrac12 \left( 1+\sqrt{1+4\lambda}\right)
	\qquad \text{if}\ \alpha=2 \,.
\end{align}

\subsubsection*{Notation}

We write
$$
A\wedge B := \min\{A,B\} \,,
\qquad
A\vee B := \max\{A,B\} \,.
$$
Moreover, in order to abbreviate some statements we suppress constants and write $A\lesssim B$ for $A,B\in\R_+$ whenever there is a constant $c>0$ such that $A\leq cB$. The notation $A\sim B$ means $A\lesssim B\lesssim A$ and in this case we say that $A$ and $B$ are comparable. If we want to emphasize that the constant $c$ may depend on some parameter, say $\tau$, we write $A\lesssim_\tau B$.

\subsubsection*{Main result -- Equivalence of Sobolev norms}

Our main result is contained in the following theorem. It states that the $L^2(\R_+^d)$-norms generated by certain powers of $L_\lambda^{(\alpha)}$ are comparable to those generated by the corresponding powers of $L_0^{(\alpha)}$.

\begin{theorem}
  \label{equivalencesobolev}
  Let
  $\alpha\in(0,2]$ and let $\lambda\geq0$ when $\alpha<2$ and $\lambda\geq-1/4$ when $\alpha=2$. Let $p$ be defined by \eqref{eq:defp}, and let $s\in(0,2]$.
  \begin{enumerate}
  \item If $s<(1+2p)/\alpha$, then $\dom (L_\lambda^{(\alpha)})^{s/2} \subset \dom (L_0^{(\alpha)})^{s/2}$ and 
    \begin{align}\label{eq:equivalencesobolev1}
      \| (L_0^{(\alpha)})^{s/2}u\|_{L^2(\R_+^d)} \lesssim_{d,\alpha,\lambda,s} \| (L_\lambda^{(\alpha)})^{s/2}u\|_{L^2(\R_+^d)}
      \quad \text{for all}\ u\in \dom (L_\lambda^{(\alpha)})^{s/2} \,.
    \end{align}
	Moreover, $C_c^\infty(\R^d_+)$ is an operator core of $(L_\lambda^{(\alpha)})^{s/2}$.
	
  \item If $s<(1+2(\alpha-1)_+)/\alpha$, then $\dom (L_0^{(\alpha)})^{s/2} \subset \dom (L_\lambda^{(\alpha)})^{s/2}$ and 
    \begin{align}\label{eq:equivalencesobolev2}
      \| (L_\lambda^{(\alpha)})^{s/2}u\|_{L^2(\R_+^d)} \lesssim_{d,\alpha,\lambda,s} \|(L_0^{(\alpha)})^{s/2}u\|_{L^2(\R_+^d)}
      \quad \text{for all}\ u\in \dom (L_0^{(\alpha)})^{s/2} \,.
    \end{align}
	Moreover, $C_c^\infty(\R^d_+)$ is an operator core of $(L_0^{(\alpha)})^{s/2}$.
  \end{enumerate}
\end{theorem}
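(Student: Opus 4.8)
The plan is to reduce everything to the $L^2(\R^d_+)$-boundedness of the two transplantation operators
\[
	T_1 := (L_0^{(\alpha)})^{s/2}(L_\lambda^{(\alpha)})^{-s/2}, \qquad T_2 := (L_\lambda^{(\alpha)})^{s/2}(L_0^{(\alpha)})^{-s/2},
\]
$T_1$ for part (1) and $T_2$ for part (2). Indeed, if $T_1$ extends to a bounded operator on $L^2(\R^d_+)$, then for $u\in\dom (L_\lambda^{(\alpha)})^{s/2}$ one puts $g=(L_\lambda^{(\alpha)})^{s/2}u$, approximates $g$ by elements of the a priori domain of $T_1$, and uses that $(L_0^{(\alpha)})^{s/2}$ is closed to conclude $u=(L_\lambda^{(\alpha)})^{-s/2}g\in\dom (L_0^{(\alpha)})^{s/2}$ together with \eqref{eq:equivalencesobolev1}; part (2) is analogous. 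For $s\le1$ these boundedness statements are elementary: for $\lambda\ge0$ one has the form inequalities $L_0^{(\alpha)}\le L_\lambda^{(\alpha)}$ and, when $\lambda_*<0$, also $L_\lambda^{(\alpha)}\le(1+\lambda/|\lambda_*|)\,L_0^{(\alpha)}$, and $t\mapsto t^{s/2}$ is operator monotone for $s/2\le1$. The substance is therefore $s\in(1,2]$, where operator monotonicity of $t\mapsto t^{s/2}$ fails and the finer structure of the operators must be exploited.

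For $s\in(1,2)$ put $\beta=s/2\in(\tfrac12,1)$ and use the subordination formulas
\[
	(L_0^{(\alpha)})^{\beta}-(L_\lambda^{(\alpha)})^{\beta}=\frac{\beta}{\Gamma(1-\beta)}\int_0^\infty t^{-1-\beta}\bigl(e^{-tL_\lambda^{(\alpha)}}-e^{-tL_0^{(\alpha)}}\bigr)\,dt, \qquad (L_\lambda^{(\alpha)})^{-\beta}=\frac1{\Gamma(\beta)}\int_0^\infty\sigma^{\beta-1}e^{-\sigma L_\lambda^{(\alpha)}}\,d\sigma,
\]
so that $T_1-I=[(L_0^{(\alpha)})^{\beta}-(L_\lambda^{(\alpha)})^{\beta}](L_\lambda^{(\alpha)})^{-\beta}$ acquires an explicit integral kernel built from the heat kernels $p_t^0$ and $p_t^\lambda$. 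To capture the cancellation between the two semigroups I would not estimate the terms separately but insert the Duhamel identity $p_t^0(x,y)-p_t^\lambda(x,y)=\lambda\int_0^t\!\!\int_{\R^d_+}p_{t-r}^\lambda(x,z)\,z_d^{-\alpha}\,p_r^0(z,y)\,dz\,dr$ and then plug in the two-sided heat-kernel bounds, which for $\alpha<2$ take the form
\[
	p_t^{\lambda}(x,y)\asymp\Bigl(1\wedge\frac{x_d}{t^{1/\alpha}}\Bigr)^{\!p}\Bigl(1\wedge\frac{y_d}{t^{1/\alpha}}\Bigr)^{\!p}\Bigl(t^{-d/\alpha}\wedge\frac{t}{|x-y|^{d+\alpha}}\Bigr),
\]
and similarly for $p_t^0$ with $p$ replaced by $(\alpha-1)_+$ (with the Gaussian replacing the last factor when $\alpha=2$). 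Carrying out the $z$–, $r$–, $t$– and $\sigma$–integrations yields a pointwise bound on the kernel of $T_1-I$, and the operator bound then follows from Schur's test with the power weights $w(x)=x_d^{\gamma}$ — the natural choice, since $T_1$ is invariant under the scaling $x\mapsto rx$ and under translations in $x'$, so its kernel is homogeneous of degree $-d$ and $\int|K_{T_1}(x,y)|\,y_d^\gamma\,dy$ is a finite constant times $x_d^\gamma$ exactly when an integrability condition on $\gamma$ holds. Tracking the exponents, a valid $\gamma$ exists and all integrations converge precisely when $s<(1+2p)/\alpha$, which is the origin of the threshold in part (1). Part (2) is entirely parallel, interchanging $L_0^{(\alpha)}$ and $L_\lambda^{(\alpha)}$; there the boundary exponent of $p_t^0$ enters and the analogous integrals converge exactly for $s<(1+2(\alpha-1)_+)/\alpha$. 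The endpoint $s=2$ is relevant only in part (1), and only when $p>\alpha-\tfrac12$; it is handled directly from $L_0^{(\alpha)}(L_\lambda^{(\alpha)})^{-1}=I-\lambda\,x_d^{-\alpha}(L_\lambda^{(\alpha)})^{-1}$ together with $\|x_d^{-\alpha}(L_\lambda^{(\alpha)})^{-1}f\|_2\lesssim\|f\|_2$ (again a consequence of the heat-kernel bounds), or, alternatively, one proves the statement for a single $s$ just below the threshold and fills in the interval $(1,s]$ by complex interpolation with the $s=1$ case.

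For the operator-core statements, recall $C_c^\infty(\R^d_+)\subset\dom (L_\lambda^{(\alpha)})^{s/2}$ by Lemma \ref{domain}, so it suffices to approximate an arbitrary $u\in\dom (L_\lambda^{(\alpha)})^{s/2}$ in the graph norm. I would first replace $u$ by $\1_{[0,N]}(L_\lambda^{(\alpha)})u$, which lies in every $\dom (L_\lambda^{(\alpha)})^k$ and, by the heat-kernel bounds, decays at spatial infinity and vanishes like $x_d^{p}$ at the boundary; then cut off at infinity, and finally excise a neighbourhood $\{x_d<\epsilon\}$ of the boundary and mollify. The only nontrivial convergence is that of the near-boundary cut-off in the graph norm of $(L_\lambda^{(\alpha)})^{s/2}$: one must show that a function vanishing like $x_d^{p}$ near $\{x_d=0\}$, multiplied by a cut-off supported in $\{x_d<\epsilon\}$, has $(L_\lambda^{(\alpha)})^{s/2}$-norm $o(1)$ as $\epsilon\to0$, and this is precisely where the hypothesis $s<(1+2p)/\alpha$ is used; it is checked through the same heat-kernel representation of $(L_\lambda^{(\alpha)})^{s/2}$.

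I expect the main obstacle to be the sharp kernel estimate feeding the Schur test: one has to exploit the cancellation between $e^{-tL_\lambda^{(\alpha)}}$ and $e^{-tL_0^{(\alpha)}}$ through the Duhamel formula rather than through crude pointwise bounds, keep precise track of the competing boundary exponents $p$ and $(\alpha-1)_+$ and of the dimension $d$, and organize the several integrations so that they converge up to — but not past — the stated value of $s$. The case $\alpha=2$ is notationally lighter, the heat-kernel bounds being Gaussian, but the scheme is the same.
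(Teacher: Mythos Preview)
Your proposal uses the same core ingredients as the paper --- two-sided heat-kernel bounds, Duhamel for the semigroup difference, and weighted Schur tests --- but organizes them differently. Rather than estimate $T_1-I$ directly, the paper inserts the Hardy weight and proves two stand-alone inequalities: a \emph{reversed Hardy inequality} $\|((L_\lambda)^{s/2}-(L_0)^{s/2})u\|_2\lesssim\|x_d^{-\alpha s/2}u\|_2$ (from difference-of-heat-kernel bounds) and a \emph{generalized Hardy inequality} $\|x_d^{-\alpha s/2}u\|_2\lesssim\|(L_\lambda)^{s/2}u\|_2$ (from Riesz-kernel bounds for $L_\lambda^{-s/2}$), then chains them by the triangle inequality. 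This decoupling replaces your composite $(z,r,t,\sigma)$-integration by two independent, more tractable estimates, each of which is also of independent interest. One subtlety you do not anticipate and would meet either way: for $\alpha<2$ and $p\ge\frac\alpha2(1+\tfrac s2)$ the Riesz kernel $L_\lambda^{-s/2}(x,y)$ is genuinely larger than the expected product $|x-y|^{\alpha s/2-d}(1\wedge\frac{x_d}{|x-y|})^p(1\wedge\frac{y_d}{|x-y|})^p$ in the region $x_d\vee y_d\le|x-y|$; this extra growth requires additional care in the Schur test.

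For the operator-core statement your sketch has two weak spots. First, the paper does not use spectral projections $\1_{[0,N]}(L_\lambda)u$; it approximates through $e^{-tL_\lambda}C_c^\infty(\R_+^d)$, for which the heat-kernel bounds together with the compact support of the pre-image yield pointwise decay at infinity, boundary vanishing of order $x_d^p$, and --- via Schauder estimates for $(-\Delta)^{\alpha/2}$ --- local $C^\beta$ bounds with $\beta>\alpha-1$. These H\"older bounds are essential when $\alpha\ge1$ to control the commutator $[(-\Delta)^{\alpha/2},\chi\theta]$ with the cut-offs; your spectral-projection substitute does not obviously deliver them, nor does it give pointwise spatial decay for general $u\in L^2$. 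Second, your reduction of the domain inclusion to boundedness of $T_1$ already presupposes the core property: to pass from the a priori domain to all of $\dom(L_\lambda)^{s/2}$ you need $u_n\in C_c^\infty$ with both $u_n\to u$ and $(L_\lambda)^{s/2}u_n\to(L_\lambda)^{s/2}u$, not merely $g_n\to g$ in $L^2$ (since $(L_\lambda)^{-s/2}$ is unbounded); the core theorem must therefore be established first, as the paper does.
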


In particular, for $s\in(0,2]$ with $s< \frac{1+2(p\wedge(\alpha-1)_+)}{\alpha}$ we have the equality $\dom (L_\lambda^{(\alpha)})^{s/2} = \dom (L_0^{(\alpha)})^{s/2}$ as well as the equivalence
\begin{equation*}
	\| (L_\lambda^{(\alpha)})^{s/2}u\|_{L^2(\R_+^d)} \sim_{d,\alpha,\lambda,s} \|(L_0^{(\alpha)})^{s/2}u\|_{L^2(\R_+^d)}
	\quad \text{for all}\ u\in \dom (L_\lambda^{(\alpha)})^{s/2} \,.
\end{equation*}
Note also that 
$$
p\wedge(\alpha-1)_+ = \begin{cases} (\alpha-1)_+ & \text{if}\ \lambda\geq 0 \,, \\ p & \text{if}\ \lambda\leq 0 \,. \end{cases}
$$
In Section \ref{sec:proofmain} we will see that the assumption $s< \frac{1+2(p\wedge(\alpha-1)_+)}{\alpha}$ is necessary for the equality $\dom (L_\lambda^{(\alpha)})^{s/2} = \dom (L_0^{(\alpha)})^{s/2}$ (under the additional assumption $\alpha<3/2$ if $d=1$).

For $\alpha=1$, we have $\lambda_*=0$ and the assumption $\lambda\geq 0$ in Theorem \ref{equivalencesobolev} is optimal, as is the assumption $\lambda\geq-1/4$ for $\alpha=2$. For $\alpha\in(0,2)\setminus\{1\}$ the restriction to $\lambda\geq 0$ is probably technical. It comes from bounds on the heat kernel of $L_\lambda^{(\alpha)}$, which are an ingredient in our proofs and which are currently known only for $\lambda\geq0$ when $\alpha<2$. Since we expect these bounds to be true also for $\lambda\in[\lambda_*,0)$, we will accompany each of our main results with a remark stating the potential extension.

\begin{remark}
  Let $\alpha\in(0,2)$, $\lambda\in[\lambda_*,0)$ and assume that $\me{-tL_\lambda^{(\alpha)}}(x,y)$ satisfies the upper bound in \eqref{eq:heatkernel} below with $p$ defined by \eqref{eq:defp}. Then Theorem \ref{equivalencesobolev} remains valid for this value of $\lambda$. This follows by the same arguments as in the proof below, taking into account Remarks~\ref{genhardyrem}, \ref{reversehardyrem} and~\ref{densityrem}.
\end{remark}

We next present two important ingredients in the proof of Theorem \ref{equivalencesobolev} which are of independent interest. They concern variants of Hardy's inequality.

\begin{theorem}[Generalized Hardy inequality]
  \label{genhardy}
  Let
  $\alpha\in(0,2]$ and let
  $\lambda\geq0$ when $\alpha\in(0,2)$ and $\lambda\geq-1/4$ when $\alpha=2$.
  Let $p$ be defined by \eqref{eq:defp}. Then, if $s\in(0,\frac{1+2p}{\alpha} \wedge \frac{2d}\alpha)$, one has
  \begin{align}
    \label{eq:genhardy}
    \| x_d^{-\alpha s/2}u\|_{L^2(\R_+^d)}
    \lesssim_{d,\alpha,\lambda,s} \|(L_\lambda^{(\alpha)})^{s/2}u\|_{L^2(\R_+^d)}
    \qquad\text{for all}\ u\in C_c^\infty(\R_+^d) \,.
  \end{align}
\end{theorem}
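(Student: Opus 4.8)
The plan is to reduce the generalized Hardy inequality \eqref{eq:genhardy} to the classical ground‑state representation associated with the exact Hardy inequality, combined with the heat‑kernel information for $L_\lambda^{(\alpha)}$. The starting point is the observation that $p$ is chosen precisely so that the function $w(x) = x_d^{(2p+1-\alpha)/2}$ (up to normalization of the exponent; in the case $\alpha=2$ this is $x_d^p$) is a formal solution of $L_\lambda^{(\alpha)} w = 0$ on $\R^d_+$, i.e. a virtual ground state at the bottom of the spectrum. For the endpoint $s=1$, plugging this ground state into the form and using the standard substitution $u = w v$ yields an identity expressing the quadratic form of $L_\lambda^{(\alpha)}$ as a nonnegative weighted Dirichlet form in $v$ plus, crucially, the term $C(p)\,\|x_d^{-\alpha/2}u\|^2$; since $C(p) \geq C(\tfrac{\alpha-1}2) = \lambda_*$ with strict monotonicity, for $\lambda \geq 0$ (hence $p \geq (\alpha-1)_+$) one gets a quantitative lower bound $\|(L_\lambda^{(\alpha)})^{1/2}u\|^2 \gtrsim \|x_d^{-\alpha/2}u\|^2$. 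This is essentially the content of \cite{BogdanDyda2011} made quantitative, and should appear as a preliminary lemma (a ``generalized Hardy inequality at level $s=1$'').

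To pass to general $s \in (0, \tfrac{1+2p}{\alpha} \wedge \tfrac{2d}{\alpha})$ I would interpolate, but not naively: the correct tool is a Stein–Weiss / Hardy–Rellich type bound in the functional calculus of $L_\lambda^{(\alpha)}$. Concretely, I would prove that for $s$ in the stated range the operator $x_d^{-\alpha s/2} (L_\lambda^{(\alpha)})^{-s/2}$ is bounded on $L^2(\R^d_+)$. The natural route is to use the subordination formula
\[
  (L_\lambda^{(\alpha)})^{-s/2} = \frac{1}{\Gamma(s/2)} \int_0^\infty t^{s/2 - 1}\, \me{-tL_\lambda^{(\alpha)}}\, dt
\]
and to estimate $\| x_d^{-\alpha s/2} \me{-tL_\lambda^{(\alpha)}} f \|_{L^2}$ using the heat‑kernel upper bound \eqref{eq:heatkernel} (the bound encoding the $x_d^p \wedge 1$–type boundary decay with exponent $p$). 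Inserting the pointwise kernel bound, Schur‑testing the resulting integral operator against the weight $x_d^{-\alpha s/2}$, and carefully tracking which powers of $t$ and $x_d$ appear, one finds that the $t$–integral converges near $t=0$ exactly when $s < 2d/\alpha$ (a dimensional/local integrability constraint) and near $t=\infty$ — after using the boundary decay $x_d^p$ — exactly when $s < (1+2p)/\alpha$. The constants are then finite and depend only on $d,\alpha,\lambda,s$, which is what the statement claims. Since $C_c^\infty(\R^d_+)$ lies in $\dom (L_\lambda^{(\alpha)})^{s/2}$ by Lemma \ref{domain}, the inequality for such $u$ follows.

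The main obstacle is the borderline bookkeeping in the heat‑kernel integral: one must insert the two‑sided (here one needs only the upper) kernel bound with the correct boundary behavior in both the $x$ and $y$ variables, and verify that the weight $x_d^{-\alpha s/2}$ is absorbed by the factor $x_d^{p}$ coming from the kernel without losing the gain near $x_d \to 0$, while simultaneously the short‑time singularity $t^{-d/\alpha}$ is controlled by the on‑diagonal decay $|x-y|^{-(d+\alpha)}$‑type scaling. This is where the two constraints $s<(1+2p)/\alpha$ and $s<2d/\alpha$ each become sharp, and getting the Schur weights right (rather than the naive choice) is the delicate point; the $s=1$ ground‑state identity is comparatively routine once $p$ is identified as the root of $C(p)=\lambda$.
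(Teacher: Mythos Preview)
Your high-level strategy is the same as the paper's: reduce \eqref{eq:genhardy} to the $L^2(\R^d_+)$-boundedness of $x_d^{-\alpha s/2}(L_\lambda^{(\alpha)})^{-s/2}$ (the paper's Theorem~\ref{genhardybdd}), represent the negative power via the subordination formula, insert the heat kernel upper bound from Theorems~\ref{heatkernel}--\ref{localheatkernelhardy}, and finish with Schur tests. The ground-state detour for $s=1$ is unnecessary --- the paper treats all admissible $s$ uniformly --- but it is not wrong.

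There are, however, two genuine gaps in your proposal.

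First, your identification of where the constraint $s<(1+2p)/\alpha$ comes from is incorrect. It is \emph{not} a $t\to\infty$ convergence condition for the subordination integral. The paper first integrates out $t$ to obtain two-sided \emph{Riesz kernel} bounds (Theorem~\ref{riesz}); the $t$-integral convergence there requires only $s<\tfrac{2d}{\alpha}\wedge\tfrac{2(d+2p)}{\alpha}$. The constraint $s<(1+2p)/\alpha$ arises afterwards, in the spatial Schur test for the operator with kernel $x_d^{-\alpha s/2}L_\lambda^{-s/2}(x,y)$: in the region $4x_d\leq|x-y|\leq 4y_d$ one needs a weight $(x_d/|x-y|)^\beta$ with $\tfrac{\alpha s}{2}-p<\beta<1+p-\tfrac{\alpha s}{2}$, and such a $\beta$ exists precisely when $\alpha s<1+2p$. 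If you try to first Schur-test $x_d^{-\alpha s/2}e^{-tL_\lambda}$ uniformly in $t$ and then integrate, a naive bound yields the strictly worse condition $s<2p/\alpha$; the order of operations and the choice of Schur weight genuinely matter.

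Second, you miss the main technical novelty in this half-space setting. When $\alpha<2$ and $p\geq\tfrac{\alpha}{2}(1+\tfrac{s}{2})$, the Riesz kernel in the region $x_d\vee y_d\leq|x-y|$ does \emph{not} have the product form $|x-y|^{\alpha s/2-d}(x_d y_d/|x-y|^2)^p$ that one would expect from analogy with \cite{Killipetal2016,Killipetal2018,Franketal2021}; see \eqref{eq:rieszimprovedsim}. There is an extra factor that grows as $x_d\vee y_d\to 0$ for fixed $|x-y|$, caused by the slow off-diagonal decay of the fractional heat kernel. The Schur test in that region must accommodate this, and the paper does so by imposing an additional constraint $|\beta-\gamma|<1+\alpha$ on the Schur weight exponents. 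Your proposal does not anticipate this phenomenon, and the ``careful tracking of powers'' you describe would break down here without it.
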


\begin{remark}\label{genhardyrem}
  Let $\alpha\in(0,2)$, $\lambda\in[\lambda_*,0)$ and assume that $\me{-tL_\lambda}(x,y)$ satisfies the upper bound in \eqref{eq:heatkernel} below with $p$ defined by \eqref{eq:defp}. Then Theorem \ref{genhardy} remains valid for this value of $\lambda$. This follows by the same arguments as in the proof below, taking into account Remark \ref{genhardybddrem}.
\end{remark}

It is interesting to compare the assumption $s\in(0,\frac{1+2p}{\alpha} \wedge \frac{2d}\alpha)$ in Theorem \ref{genhardy} with the corresponding assumption for the Hardy inequality in $\R^d$ with weight $|x|^{-\alpha s/2}$ with a point singularity, namely $s\in(0,\frac{d+2p}{\alpha} \wedge \frac{2d}\alpha)$; cf.~\cite[Proposition~1.4]{Franketal2021} or \cite[Proposition~3.2]{Killipetal2018}. The difference between $d$ and $1$ in this assumption reflects the different dimensionalities of the sets where the Hardy weight is singular.

\begin{theorem}[Reversed Hardy inequality]
  \label{reversehardy}
  Let
  $\alpha\in(0,2]$ and let
  $\lambda\geq0$ when $\alpha<2$ and $\lambda\geq-1/4$ when $\alpha=2$. Let $p$ be defined by \eqref{eq:defp} and let $s\in(0,2]$. Then
  \begin{align*}
    \left\| \left( (L_{\lambda}^{(\alpha)})^{s/2} - (L_0^{(\alpha)})^{s/2} \right) u \right\|_{L^2(\R_+^d)}
    \lesssim_{d,\alpha,\lambda,s} \| x_d^{-\alpha s/2}u\|_{L^2(\R_+^d)}
    \quad\text{for all}\ u\in C_c^\infty(\R_+^d) \,.
  \end{align*}
\end{theorem}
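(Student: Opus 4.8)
\textbf{Proof plan for Theorem \ref{reversehardy}.}
The plan is to express the difference of the fractional powers by a Balakrishnan-type integral representation and then to estimate the resolvent difference pointwise against the Hardy weight. Concretely, for $s\in(0,2]$ write $s/2 = \theta \in (0,1]$; if $\theta=1$ the difference is just $\lambda\, x_d^{-\alpha}$ applied to $u$ and the bound is trivial, so assume $\theta\in(0,1)$. Using
\begin{equation*}
	B^\theta = \frac{\sin\pi\theta}{\pi}\int_0^\infty t^{\theta-1}\, B\,(B+t)^{-1}\,dt
\end{equation*}
for a nonnegative selfadjoint operator $B$, applied to $B=L_\lambda^{(\alpha)}$ and $B=L_0^{(\alpha)}$, and the second resolvent identity, one obtains for $u\in C_c^\infty(\R^d_+)$
\begin{equation*}
	\left( (L_\lambda^{(\alpha)})^\theta - (L_0^{(\alpha)})^\theta \right) u
	= \frac{\sin\pi\theta}{\pi}\, \lambda \int_0^\infty t^{\theta}\, (L_\lambda^{(\alpha)}+t)^{-1}\, x_d^{-\alpha}\, (L_0^{(\alpha)}+t)^{-1}\, u \;\frac{dt}{t}\,,
\end{equation*}
after a short manipulation turning $B(B+t)^{-1} - B'(B'+t)^{-1} = t\big((B'+t)^{-1}-(B+t)^{-1}\big)$ and using $L_\lambda^{(\alpha)} - L_0^{(\alpha)} = \lambda x_d^{-\alpha}$ on $C_c^\infty(\R^d_+)$. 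Taking $L^2$-norms and using the triangle inequality in integral form, it suffices to bound
\begin{equation*}
	\int_0^\infty t^{\theta-1}\, \big\| (L_\lambda^{(\alpha)}+t)^{-1}\, x_d^{-\alpha}\, (L_0^{(\alpha)}+t)^{-1}\, u \big\|_{L^2} \, dt
	\lesssim \| x_d^{-\alpha\theta} u\|_{L^2}\,.
\end{equation*}

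The key analytic input is a pair of weighted resolvent bounds, obtained from the heat kernel estimates \eqref{eq:heatkernel} by the formula $(L+t)^{-1} = \int_0^\infty e^{-tr} e^{-rL}\, dr$. First, for the left factor I would prove a mapping estimate of the form
\begin{equation*}
	\big\| x_d^{-\alpha\theta}\, (L_\lambda^{(\alpha)}+t)^{-1}\, x_d^{-\alpha(1-\theta)}\, f \big\|_{L^2} \lesssim t^{-\theta}\, \|f\|_{L^2}\,,
\end{equation*}
equivalently a bound on the kernel of $x_d^{-\alpha\theta}(L_\lambda^{(\alpha)}+t)^{-1}x_d^{-\alpha(1-\theta)}$ via Schur's test, where the heat kernel bound $e^{-rL_\lambda^{(\alpha)}}(x,y)\lesssim (\ldots)$ involving the factors $(1\wedge x_d^{p}r^{-p/\alpha})(1\wedge y_d^{p} r^{-p/\alpha})$ from \eqref{eq:heatkernel} controls the $x_d$-weights precisely when $\alpha\theta < (1+2p)/2$ on each side — and here $\theta\le 1$ combined with $p\ge(\alpha-1)/2$ gives room, but one must track constants carefully. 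Second, and symmetrically, for the right factor one needs
\begin{equation*}
	\big\| x_d^{\alpha(1-\theta)}\, (L_0^{(\alpha)}+t)^{-1}\, x_d^{-\alpha\theta} g \big\|_{L^2} \lesssim t^{-\theta}\, \|g\|_{L^2}\,,
\end{equation*}
i.e.\ the same type of bound with $\lambda=0$ (so $p=(\alpha-1)_+$) but with the weights on the opposite sides, which again follows from the heat kernel bound for $L_0^{(\alpha)}$. Composing these two and inserting $x_d^{-\alpha} = x_d^{-\alpha\theta}\cdot x_d^{-\alpha(1-\theta)}$ in the middle so that the inner weights cancel, one gets for the integrand the bound $t^{-\theta}\cdot t^{-(1-\theta)}\,\|x_d^{-\alpha\theta}u\|_{L^2} = t^{-1}\|x_d^{-\alpha\theta}u\|_{L^2}$ near $t\to\infty$ and a complementary bound near $t\to 0$; the factor $t^{\theta-1}$ together with the operator-norm decay then makes the $t$-integral converge at both ends, which is where the strict inequality $\theta<1$ (equivalently $s<2$) is used, with the endpoint $s=2$ handled separately as noted.

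The main obstacle I anticipate is establishing the weighted resolvent (equivalently, weighted heat kernel) bounds with the \emph{shifted} weights $x_d^{-\alpha\theta}$ and $x_d^{-\alpha(1-\theta)}$ on the two sides rather than the symmetric $x_d^{-\alpha/2}$ on each side: one has to run Schur's test against a comparison weight $x_d^{\beta}$ and check the finiteness of $\int_0^\infty e^{-rL}(x,y)\, y_d^{-\alpha(1-\theta)}\, y_d^{\beta}\, dy$ and its transpose uniformly in the remaining variable and in $r$, which forces a careful bookkeeping of the interplay between the Gaussian/polynomial off-diagonal decay in \eqref{eq:heatkernel}, the boundary-layer factors $(1\wedge x_d^{p}r^{-p/\alpha})$, and the integrability exponents near $x_d=0$ and $x_d=\infty$. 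A secondary technical point is justifying the Balakrishnan representation and the manipulations above on $C_c^\infty(\R^d_+)$ — here one uses that $C_c^\infty(\R^d_+)\subset\dom(L_\lambda^{(\alpha)})$ so that $L_\lambda^{(\alpha)}u = L_0^{(\alpha)}u + \lambda x_d^{-\alpha}u$ makes sense in $L^2$, together with dominated convergence to interchange the $t$-integral with the $L^2$-norm; these steps are routine once the quantitative resolvent bounds are in hand. Finally, I would remark that the whole argument is insensitive to the sign of $\lambda$ beyond the standing assumption, so that the extension to $\lambda\in[\lambda_*,0)$ claimed in the remarks follows verbatim once the heat kernel bound \eqref{eq:heatkernel} is available in that range.
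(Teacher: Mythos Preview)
Your Balakrishnan/second-resolvent setup is formally correct, but the estimation strategy has a scaling obstruction that cannot be repaired within your scheme. The operator you actually need to bound on $L^2$ is $T(t):=(L_\lambda^{(\alpha)}+t)^{-1}x_d^{-\alpha}(L_0^{(\alpha)}+t)^{-1}x_d^{\alpha\theta}$ (so that the input is $g=x_d^{-\alpha\theta}u$). Under the unitary dilations $U_rf(x)=r^{d/2}f(rx)$ one has $U_rL^{(\alpha)}U_r^{-1}=r^{-\alpha}L^{(\alpha)}$ and $U_rx_d^{b}U_r^{-1}=r^{b}x_d^{b}$, hence $U_rT(t)U_r^{-1}=r^{\alpha(1+\theta)}T(r^\alpha t)$ and therefore $\|T(t)\|=t^{-(1+\theta)}\|T(1)\|$ for every $t>0$. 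Your integrand is then $t^{\theta-1}\,t^{-(1+\theta)}=t^{-2}$, and the $t$-integral diverges at both endpoints; no ``complementary bound near $t\to0$'' can exist, because the $t$-dependence is forced exactly by homogeneity. The same scaling kills your individual claimed estimates: for example $x_d^{-\alpha\theta}(L_\lambda^{(\alpha)}+t)^{-1}x_d^{-\alpha(1-\theta)}$ is scale-invariant, so its norm is a constant independent of $t$ and cannot be $\lesssim t^{-\theta}$ as $t\to\infty$.

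The paper's proof avoids this by never passing to operator norms inside the $t$-integral. It uses the heat-semigroup representation and bounds the \emph{kernel} of the difference $K_t^\alpha(x,y)=e^{-tL_0}(x,y)-e^{-tL_\lambda}(x,y)$ pointwise (Theorem~\ref{differenceheatkernel3}), where Duhamel's formula produces an extra smallness factor $t\,(x_d\vee y_d)^{-\alpha}$ in the region where both points are far from the boundary (the term $M_t^\alpha$). One then integrates the pointwise kernel bound in $t$ first and only afterwards runs a weighted Schur test in $(x,y)$ (Step~3 of the proof and Proposition~\ref{schurkillip2}). The point is that a Schur test on $\int_0^\infty t^{-1-s/2}|K_t^\alpha(x,y)|\,y_d^{\alpha s/2}\,dt$ can succeed even though $\int_0^\infty t^{-1-s/2}\|K_t^\alpha\|_{\rm op}\,dt$ would diverge; the spatial cancellation captured by the $M_t^\alpha$ bound is exactly what your operator-norm splitting discards. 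To salvage a resolvent approach you would need a mechanism that keeps the $t$- and $(x,y)$-variables coupled (e.g.\ a square-function or Stein-interpolation argument), which is essentially a different proof.
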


\begin{remark}
  \label{reversehardyrem}
  Let $\alpha\in(0,2)$, $\lambda\in[\lambda_*,0)$ and assume that $\me{-tL_\lambda^{(\alpha)}}(x,y)$ satisfies the upper bound in \eqref{eq:heatkernel} below with $p$ defined by \eqref{eq:defp}. Then Theorem \ref{reversehardy} remains valid for this value of $\lambda$. This follows by the same arguments as in the proof below, taking into account Remark \ref{differenceheatkernel3rem}.
\end{remark}


\begin{remark}\label{otherdef}
	We have made the choice to compare the operators $L_\lambda^{(\alpha)}$ for general $\lambda\geq\lambda_*$ with the operator $L_0^{(\alpha)}$ for the case $\lambda=0$. This is natural given the quadratic form definition of the operators $L_\lambda^{(\alpha)}$. For $\alpha<2$, there is another natural choice for the comparison operator, namely $L_{\lambda_0}^{(\alpha)}$ with $\lambda_0\in (0,\infty)$ defined by
	$$
	\mathcal A(d,-\alpha) \int_{\R_-^d} \frac{dy}{|x-y|^{d+\alpha}} = \frac{\lambda_0}{x_d^\alpha} \,.
	$$
	(The fact that the left side is a constant multiple of $x_d^{-\alpha}$ follows by simple translation and dilation considerations.) With this definition of $\lambda_0$, we have for $u\in C^1_c(\R^d_+)$, identified with its extension by zero to $\R^d$,
	\begin{align*}
		\| (-\Delta)^{\alpha/4} u \|_{L^2(\R^d)}^2 & = \tfrac12\, \mathcal A(d,-\alpha) \iint_{\R^d\times\R^d} \frac{|u(x)-u(y)|^2}{|x-y|^{d+\alpha}}\,dx\,dy \\
		& = \tfrac12\, \mathcal A(d,-\alpha) \iint_{\R_+^d\times\R_+^d} \frac{|u(x)-u(y)|^2}{|x-y|^{d+\alpha}}\,dx\,dy + \lambda_0 \int_{\R^d_+} \frac{|u(x)|^2}{x_d^\alpha}\,dx \,.
	\end{align*}
	In this sense the operator $L_{\lambda_0}^{(\alpha)}$ is equally natural as $L_{0}^{(\alpha)}$. Our arguments in this paper extend without significant changes to the case where we compare with $L_{\lambda_0}^{(\alpha)}$. However, for the sake of concreteness and conciseness we have decided to present the arguments in the case of comparison with the operator $L_0^{(\alpha)}$.
\end{remark}

\begin{remark}
  We consider the Schr\"odinger operators $L_\lambda^{(\alpha)}$ whose potential is precisely $\lambda x_d^{-\alpha}$. In some applications it is necessary to allow more general potentials $V$ satisfying $\lambda |x|^{-\alpha} \leq V(x) \leq \tilde \lambda x_d^{-\alpha}$ for all $x\in\R^d_+$ with some $\lambda_*\leq\lambda\leq\tilde\lambda<\infty$. In this case an analogue of Theorem \ref{equivalencesobolev} holds with $p$ defined by \eqref{eq:defp} with the given $\lambda$; in particular, it is independent of $\tilde\lambda$. This follows by a simple modification of our proofs. We have carried out the details in \cite[Section 4]{Franketal2021} in the case of Hardy weights with point singularities and omit the corresponding details here.
\end{remark}

\subsection{Background and motivation}
\label{s:motivation}

After having presented our main results, we would like to put them into context and discuss some previous, related results.

Homogeneous operators appear frequently in applications as model operators or as scaling limits of more complicated operators, and one aims at analyzing them in as much detail as possible to draw conclusions about the perturbed versions that appear in applications. From the point of view of pure mathematics and harmonic analysis homogeneous operators are interesting as testing grounds of how much of Euclidean Fourier analysis remains valid when one dispenses with translation invariance.

A typical feature of homogeneous operators is the appearance of critical coupling constants. These are often related to sharp constants in Hardy-type inequalities. For instance, Hardy's original inequality \cite{Hardy1919,Hardy1920,OpicKufner1990,Kufneretal2006} is the case $d=1$ of the inequality
$$
\int_{\R^d_+} |\nabla u|^2\,dx \geq \frac14 \int_{\R^d_+} \frac{|u|^2}{x_d^2}\,dx
\qquad\text{for all}\ u\in C^1_c(\R^d_+) \,.
$$
This inequality is precisely what guarantees that the operators $L_\lambda^{(2)}$ with $\lambda\geq-\frac14$ are lower semibounded on $C^1_c(\R^d_+)$ and therefore can be realized as selfadjoint operators in $L^2(\R^d_+)$. The fact that the constant $\frac14$ in Hardy's inequality is sharp means that the operators $L_\lambda^{(2)}$ with $\lambda<-\frac14$ are not lower semibounded on $C^1_c(\R^d_+)$ and therefore cannot have a lower bounded selfadjoint extension. In applications the operators $L_\lambda^{(2)}$ appear almost only with $\lambda\geq-\frac14$.

Another natural extension of Hardy's inequality to the higher dimensional case is
$$
\int_{\R^d} |\nabla u|^2\,dx \geq \frac{(d-2)^2}4 \int_{\R^d} \frac{|u|^2}{|x|^2}\,dx
\qquad\text{for all}\ u\in C^1_c(\R^d) \ \text{if}\ d\geq 3 \,.
$$
The corresponding operators $-\Delta + \lambda |x|^{-2}$ for $\lambda \geq - \frac{(d-2)^2}{4}$ where studied in the influential paper by Killip, Miao, Visan, Zhang and Zheng \cite{Killipetal2018}. These authors were motivated by the analysis of nonlinear dispersive PDEs, more precisely, by the study of the global well-posedness and scattering for the nonlinear Schr\"odinger equation with inverse-square potential \cite{Killipetal2017,Killipetal2017T}. In \cite{Killipetal2018} the domains of the operators $(-\Delta + \lambda |x|^{-2})^{s/2}$ were compared with the homogeneous Sobolev spaces $\dot H^{s}(\R^d)$ and in this connection a relation between the power $s$ and the coupling constant $\lambda$ was observed for the first time. Earlier, Killip, Visan and Zhang \cite{Killipetal2016} had studied a similar question for the Dirichlet Laplacian on the complement of a compact, convex set, motivated again by questions about nonlinear Schr\"odinger equations. The techniques developed in \cite{Killipetal2016,Killipetal2018} play an important role in our analysis.

Hardy's inequality has been generalized to powers of the Laplacian. A special case of a result by Herbst \cite{Herbst1977} is that
$$
\left\| (-\Delta)^{\alpha/4} u \right\|_{L^2(\R^d)}^2 \geq 2^{\alpha} \, \frac{\Gamma(\frac{d+\alpha}{4})^2}{\Gamma(\frac{d-\alpha}{4})^2}\, \left\| |x|^{-\alpha/2} u \right\|_{L^2(\R^d)}^2
\qquad\text{for all}\ u\in\dot H^\frac\alpha2(\R^d) \ \text{if}\ d>\alpha \,.
$$
For alternative proofs of Herbst's inequality see \cite{Kovalenkoetal1981,Yafaev1999,Franketal2008H,FrankSeiringer2008}. Of particular importance is the case $\alpha=1$ and $d=3$, since the operator $\sqrt{-\Delta+m^2}-m^2 +\lambda |x|^{-1}$ in $L^2(\R^3)$ provides a model for a relativistic description of an electron in the Coulomb field of a point nucleus. The scale invariant model problem for the latter operator is the homogeneous operator $\sqrt{-\Delta} +\lambda |x|^{-1}$ and many results about the latter operator have implications for the quantum mechanics with relativistic effects. For instance, Lieb--Thirring inequalities for the latter operator were used to solve the problem of stability of matter in the presence of magnetic fields \cite{Franketal2008H,Franketal2007S}.

Recently, in joint work with Heinz Siedentop and Barry Simon, we discussed the analogue of the strong Scott conjecture for relativistic electrons \cite{Franketal2020P}. This is a quantum many-body problem, where the underlying one-body operator is again $\sqrt{-\Delta+m^2}-m^2 +\lambda |x|^{-1}$ in $L^2(\R^3)$. In connection with this investigation we needed information about the domains of the operators $(\sqrt{-\Delta} +\lambda |x|^{-1})^{s/2}$. More precisely, in our approach we needed to know that for any $\lambda>\lambda_*$ there is an $s>1$ such that the $L^2(\R^d)$-norms generated by $(\sqrt{-\Delta} +\lambda |x|^{-1})^{s/2}$ are equivalent to those generated by $(-\Delta)^{s/2}$. That this is indeed the case was shown in \cite{Franketal2021}, thus leading to a proof of the strong Scott conjecture in the relativistic case. For an alternative proof see \cite{Franketal2023} and for a review about the Scott conjecture see \cite{Franketal2023T}.

In passing we mention that the papers \cite{Killipetal2016,Killipetal2018} also deal with the case where the underlying norms are those in $L^p(\R^d)$ with $p\neq 2$. Similarly, the results in \cite{Franketal2021}, which concerned $L^2$-norms, have been extended to $L^p$-norms with general $1<p<\infty$; see \cite{Merz2021} for $\lambda>0$ and \cite{BuiDAncona2023,BuiNader2022} for all $\lambda\geq\lambda_*$. Proofs for $p\neq 2$ often rely on multiplier theorems in the spirit of the Mikhlin--H\"ormander theorem. (Note that such multiplier theorems are immediate consequences of the spectral theorem when $p=2$.) In the local case $\alpha=2$ the proof of multiplier theorems can be based on heat kernel bounds with Gaussian off-diagonal decay. In the absence of such bounds the case $\alpha<2$ is substantially more complicated; see also \cite{Merz2022}.

\medskip

In the present paper we address the analogous question in the $L^2$-case for fractional operators on half-spaces. The corresponding sharp Hardy inequality in this setting is due to Bogdan and Dyda \cite{BogdanDyda2011} and states that
\begin{align*}
	\tfrac12\, \mathcal A(d,-\alpha) \iint_{\R_+^d\times\R_+^d} \frac{|u(x)-u(y)|^2}{|x-y|^{d+\alpha}}\,dx\,dy \geq -\lambda_* \int_{\R^d_+} \frac{|u(x)|^2}{x_d^\alpha}\,dx
	\ \text{for all}\ u\in C^1_c(\R^d_+) \,.
\end{align*}
For an alternative proof see \cite{FrankSeiringer2010}.

The main new difficulty compared to previous investigations is the presence of a boundary in the fractional case. Note that there is an interplay between the order $\alpha$ of the operator and the effect of the boundary. For $\alpha<1$ we expect the influence of the boundary to be negligible, with $\alpha=1$ being a subtle borderline case. This expectation manifests itself, for instance, in the appearance of the positive part $(\alpha-1)_+$ in part~(2) of Theorem \ref{equivalencesobolev}. Related to this is the appearance, for small $\alpha$ and large $\lambda$, of a large extra factor in the Riesz kernel bounds (Theorem~\ref{riesz} below) when the distance of both points to the boundary is much smaller than their mutual distance. This is a phenomenon not encountered in previous studies of similar questions.

We expect our results in the model case of a homogeneous operator on a half-space to have applications and extensions to the study of both more general operators and more general domains.

\subsection{Method of proof and organization of the paper}

The proof of Theorem \ref{equivalencesobolev} consists of two parts. In the first part, we prove the relevant inequalities for functions in $C_c^\infty(\R^d_+)$ and in the second part, we show that the latter set is an operator core, thereby extending the inequalities to all functions in the domain in the relevant operators.

The first part of the proof of Theorem \ref{equivalencesobolev}, is an immediate consequence of Theorems \ref{genhardy} and \ref{reversehardy}. The main ingredient for the proof of both of these theorems are pointwise bounds on the heat kernels of the operators $L_\lambda^{(\alpha)}$, which have been proved recently by Cho, Kim, Song and Vondra\v{c}ek \cite{Choetal2020} and Song, Wu and Wu \cite{Songetal2022} for $\alpha<2$. The structure of these bounds is that they differ from the whole space heat kernel by a product of two extra factors that depend on the distance of $x$ (resp.\ $y$) from the boundary relative to $t^{1/\alpha}$. This is summarized in Section \ref{s:heatkernel}, with some technical details deferred to Appendix \ref{s:localheatkernel}.

For the proof of Theorem \ref{genhardy} we use these heat kernel bounds to deduce Riesz kernel bounds, that is, bounds on the kernels of the operators $(L_\lambda^{(\alpha)})^{-s/2}$ with $s<\frac{2d}\alpha$; see Theorem \ref{riesz}. For $\alpha=2$ and all $\lambda$, or for $\alpha<2$ and all not too large $\lambda$ (depending on $\alpha$ and $s$), these Riesz kernel bounds inherit the structure of the heat kernel bounds, namely the whole space kernel multiplied by two extra factors. When $\alpha<2$ and $\lambda$ is large, however, this product structure of the Riesz kernel bounds is no longer valid and needs to be replaced by a term, which relative to the product structure becomes unbounded when both $x$ and $y$ are close to the boundary (compared to $|x-y|$). This phenomenon does not occur in previous works on related questions, such as \cite{Killipetal2016,Killipetal2018,Franketal2021}. 

Once the Riesz kernel bounds have been established, the generalized Hardy inequality in Theorem \ref{genhardy} follows by Schur tests; see Section \ref{s:genhardy}. This is conceptually similar to \cite{Killipetal2016,Killipetal2018,Franketal2021}, but the violation of the product structure for certain $\lambda$ necessitates some extra efforts. This will complete the proof of Theorem~\ref{genhardy}.

Turning to the proof of Theorem \ref{reversehardy}, we need bounds on the \emph{difference} of the heat kernels of $L_\lambda^{(\alpha)}$ and $L_0^{(\alpha)}$. Those are derived in Section \ref{s:differenceheatkernels}. The difficulty here is that in a certain region of space, namely when both $x$ and $y$ are far away from the boundary (compared to $t^{1/\alpha}$), but close together (compared to their distance from the boundary), one needs to quantify a cancellation coming from taking the \emph{difference} of the heat kernels. Again there are similarities to earlier such arguments, but we believe that here we carry them out more efficiently than in \cite{Franketal2021} and that our new arguments would simplify the proof in \cite{Franketal2021}.

Once the bounds on the difference of the heat kernels have been established, the reverse Hardy inequality in Theorem \ref{reversehardy} follows by Schur tests; see Section \ref{s:schur}. These Schur tests are again conceptually similar to earlier arguments, but require substantially more technical work.

It is perhaps worth pointing out the simple idea that guides the technical work in Sections \ref{s:genhardy}, \ref{s:differenceheatkernels} and \ref{s:schur}, namely to exploit the invariance of the operators $L_\lambda^{(\alpha)}$ with respect to translations parallel to the boundary. This implies that the kernels of the various operators discussed above depend on the variables $x'$ and $y'$ only through their difference $x'-y'$ (in fact, only on $|x'-y'|$), and therefore one aims at integrating out these variables. In this way we try to effectively reduce the problem to the one for the operator $L_\lambda^{(\alpha)}$ in one dimension. Once one is in one dimension, the distinction of the various regions (defined through the length scales $x_d$, $y_d$, $|x-y|$ and $t^{1/\alpha}$) simplifies considerably and allows one to conclude the proof.

We also note that we could have used the invariance with respect to translations parallel to the boundary already at the beginning and written $L_\lambda^{(\alpha)}$ as a direct integral of certain operators $L_\lambda^{(\alpha)}(\xi')$ in $L^2(\R_+)$, depending on a parameter $\xi'\in\R^{d-1}$, the Fourier variable corresponding to the space variable $x'$. In this way, we can rewrite all inequalities in Theorems \ref{equivalencesobolev}, \ref{genhardy} and \ref{reversehardy} as inequalities for the operators $L_\lambda^{(\alpha)}(\xi')$ \emph{with constants uniform in} $\xi'$. While this would have immediately reduced the problem to the one-dimensional case, one would have to deal with the uniformity in the parameter $\xi'$. Also, as far as we know, precise heat kernel bounds for the operators $L_\lambda^{(\alpha)}(\xi')$ are not available in the literature. (In this connection we mention the recent heat kernel bounds for $(-\Delta+1)^{\alpha/2} + V_\lambda^{(\alpha)}$ in $L^2(\R^d)$ for certain critical potentials $V_\lambda^{(\alpha)}$ that satisfy $V_\lambda^{(\alpha)} \sim \lambda |x|^{-\alpha}$ as $x\to 0$; see \cite{Jakubowskietal2022,Jakubowskietal2022R}.) We also note that precise information on the operators $(-\tfrac{d^2}{dx^2}+|\xi'|^2)^{\alpha/2}$ in $L^2(\R_+)$ (defined on $C^1_c(\R_+)$ via extension by zero to $\R$, then action on $\R$ and then restriction back to $\R_+$) has been obtained in \cite{Kwasnicki2011}. This information has been instrumental in \cite{FrankGeisinger2016}. These operators are similar, but in general different from the operators $L_\lambda^{(\alpha)}(\xi')$.

This concludes our discussion of the first part of the proof of Theorem \ref{equivalencesobolev}. The second part, namely the proof of the operator core property, takes up Sections \ref{sec:comm} and \ref{sec:density}. The main result here is Theorem \ref{density} in Section \ref{sec:density}. Its proof relies once more on the heat kernel bounds in Section \ref{s:heatkernel}. The novel ingredient here is a combination of these bounds with Schauder theory for the Laplacian and its fractional analogue. Applying Schauder estimates on appropriately chosen scales we obtain local H\"older norm bounds. These allow us to control action of the commutator of $(-\Delta)^{\alpha/2}$ with cut-off functions. Such bounds are the topic of Section \ref{sec:comm}.

We end this introduction by noting that in this paper we have restricted ourselves to the case where the underlying norms are $L^2$-norms. This is the case most frequently encountered in applications, including the before-mentioned ones to mathematical physics. There are other applications, such as those in connection with nonlinear Schr\"odinger equations, where one needs $L^p$-norm with general $1<p<\infty$. Also from a harmonic analysis point of view the proof of such bounds is a formidable problem, related to spectral multiplier theorems; see the references above in the case of a point singularity. Proving an analogue in the present situation of singularities along a hyperplane is an open problem.

\section{Heat kernel bounds for Hardy operators}
\label{s:heatkernel}

\textbf{Notation.}
In the following, we omit the superscript $(\alpha)$ in the notation for $L_\lambda^{(\alpha)}$ and write merely $L_\lambda\equiv L_\lambda^{(\alpha)}$ when there is no danger of confusion. 

\medskip


Of fundamental importance for us are pointwise bounds on the heat kernel of $L_\lambda$. We begin with the case $\alpha<2$.

\begin{theorem}
  \label{heatkernel}
  Let
  $\alpha\in(0,2)$ and let $\lambda\geq0$. Let $p$ be defined by \eqref{eq:defp}. Then one has, for all $x,y\in\R_+^d$ and $t>0$,
  \begin{align}
    \label{eq:heatkernel}
    e^{-tL_\lambda}(x,y) \sim \left( 1 \wedge \frac{x_d}{t^{1/\alpha}} \right)^{p} \left( 1 \wedge \frac{y_d}{t^{1/\alpha}} \right)^{p} t^{-\frac d{\alpha}} \left( 1 \wedge \frac{t^{1/\alpha}}{|x-y|} \right)^{d+\alpha}\,.
  \end{align}
\end{theorem}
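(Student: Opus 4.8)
The plan is to assemble \eqref{eq:heatkernel} from the probabilistic literature and to verify that the boundary exponent appearing in those references coincides with the $p$ of \eqref{eq:defp}. I would start with $\lambda=0$, where $L_0$ is the regional (censored) $\alpha$-stable operator on $\R^d_+$. Its transition density is known to satisfy \eqref{eq:heatkernel} with $p=(\alpha-1)_+$: for $\alpha\in(0,1]$ the censored process essentially does not feel the boundary, so near $\partial\R^d_+$ the kernel agrees up to constants with the whole-space one, while for $\alpha\in(1,2)$ one picks up the factor $(1\wedge x_d/t^{1/\alpha})^{\alpha-1}(1\wedge y_d/t^{1/\alpha})^{\alpha-1}$; see \cite{Bogdanetal2003} and subsequent work on censored stable processes. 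Since $C(0)=C(\alpha-1)=0$, the number $(\alpha-1)_+$ is exactly the value assigned by \eqref{eq:defp} to $\lambda=0$, so this case is consistent with the claim.

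For $\lambda>0$ the first task is to identify \emph{why} $p$ is the governing exponent. By translation invariance along $\partial\R^d_+$ and scaling in $x_d$, for each admissible $q$ the function $x_d^{q}$ satisfies $(-\Delta)^{\alpha/2}_{\R^d_+}[x_d^{q}]=C(q)\,x_d^{q-\alpha}$ in $\R^d_+$ with a constant $C(q)$ independent of $d$, whose explicit value is \eqref{eq:defcp} (this is the computation underlying Bogdan--Dyda's sharp Hardy inequality; see Appendix \ref{s:defp}). Thus $x_d^{q}$ is $L_\lambda$-harmonic precisely when $C(q)=\lambda$, and of the two such $q$ the slowest-growing one is $q=p\in[\tfrac{\alpha-1}2,M)$. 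This is the exponent that must govern the decay of $e^{-tL_\lambda}(\cdot,y)$ toward the boundary, which one would formalize through a boundary Harnack principle comparing $e^{-tL_\lambda}(\cdot,y)$ with $x_d^{p}$ on the scale $t^{1/\alpha}$.

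To get the two-sided bound for $\lambda>0$ I would treat $\lambda x_d^{-\alpha}$ as a Schr\"odinger perturbation of $L_0$ through the Feynman--Kac formula. The upper bound amounts to a sharp estimate of $\mathbb E_x\big[\exp(-\lambda\int_0^t(X^{(d)}_s)^{-\alpha}\,ds)\,;\,X_t\in dy\big]$ against the $\lambda=0$ kernel times the two factors $(1\wedge x_d/t^{1/\alpha})^{p}$ and $(1\wedge y_d/t^{1/\alpha})^{p}$, while the matching lower bound follows from a chain (3P-type) argument together with the boundary Harnack principle. This is precisely the content of \cite{Choetal2020} and \cite{Songetal2022}, and the only remaining work would be bookkeeping: matching their normalization of $\mathcal A(d,-\alpha)$ and their parameterization of the coupling to ours, and confirming that their exponent equals $p$.

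The \textbf{main obstacle} is the criticality of the Hardy potential: $\lambda x_d^{-\alpha}$ has the same homogeneity as $L_0$, so one can neither simply localize near the boundary nor expand in a convergent Duhamel series; the survival-probability estimates must be carried out globally and must be sharp \emph{simultaneously} in all four length scales $x_d$, $y_d$, $|x-y|$ and $t^{1/\alpha}$, with uniform constants. It is this combination of non-locality with a scale-invariant boundary singularity that makes \eqref{eq:heatkernel} a genuine theorem, which is why we quote it from \cite{Choetal2020,Songetal2022} rather than reproving it here.
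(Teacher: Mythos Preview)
Your proposal is correct and matches the paper's approach: the paper does not prove this theorem either but simply quotes it from the literature (\cite{ChenKumagai2003} and \cite{Chenetal2010} for $\lambda=0$, and \cite{Choetal2020}, \cite{Songetal2022} for $\lambda\geq 0$), with the only additional work being the verification in Appendix~\ref{s:defp} that the parameterization of $C(p)$ agrees with that in \cite{Choetal2020}. Your discussion of why $p$ is the governing exponent via the harmonic function $x_d^{p}$ and your sketch of the Feynman--Kac/boundary Harnack mechanism are helpful heuristic context that the paper omits, but the bottom line is the same: cite and match exponents.
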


Let us give references for where this theorem is proved. For $\lambda=0$ and $\alpha\leq 1$ the bound appears in \cite{ChenKumagai2003}. (More precisely, \cite{ChenKumagai2003} considers reflected processes, but for $\alpha\leq 1$ this coincides with the censored processes that we are interested in.) For $\lambda=0$ and $1<\alpha<2$ the bound appears in \cite{Chenetal2010}. (More precisely, \cite{Chenetal2010} only has this bound up to some arbitrary, but fixed time. However, by scaling invariance, once this bound is proved for any given time, it follows for all times.) The case $\lambda\geq0$ has been treated more recently and the bound appears in \cite{Choetal2020}; see also \cite{Songetal2022}.

Our definition of the function $p\mapsto C(p)$, which relates $p$ and the coupling constant $\lambda$, is seemingly different from the one used in \cite{Choetal2020}. We show that it is not in Appendix \ref{s:defp}.

As we have already said in the introduction, the restriction $\lambda\geq 0$ in our main results is a consequence of this restriction in Theorem \ref{heatkernel}. We expect that the latter theorem, and therefore also our main results, extend to the full range $\lambda\geq\lambda_*$.


We now turn the case $\alpha=2$.

\begin{theorem}\label{localheatkernelhardy}
  Let $\alpha=2$ and let $\lambda\geq-\frac14$. Let $p$ be given by \eqref{eq:defp}, that is, by \eqref{eq:defp2}. Then, for all $x,y\in\R_+^d$ and $t>0$,
	\begin{align}
		\label{eq:localheatkernelhardy}
		\exp\left(-tL_\lambda\right)(x,y)
		\asymp \left(1\wedge\frac{x_d}{\sqrt t}\right)^{p}\left(1\wedge\frac{y_d}{\sqrt t}\right)^{p} t^{-d/2}\me{-c|x-y|^2/t}\,,
	\end{align}
	where the notation $\asymp$ means the same as $\sim$, but where the constants $c$ in the exponential function are allowed to be different in the upper and the lower bounds.
\end{theorem}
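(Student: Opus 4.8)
The plan is to reduce the $d$-dimensional bound to a one-dimensional one via the tensor structure of the operator, and then to establish the one-dimensional bound by combining the well-known two-sided estimate for the Dirichlet heat kernel on the half-line with a Doob $h$-transform (ground-state transformation) argument. First I would observe that, writing $x=(x',x_d)$, the operator $L_\lambda^{(2)} = -\Delta_{x'} - \partial_{x_d}^2 + \lambda x_d^{-2}$ on $L^2(\R^d_+) = L^2(\R^{d-1})\otimes L^2(\R_+)$ splits as $L_\lambda^{(2)} = (-\Delta_{x'})\otimes \1 + \1\otimes \ell_\lambda$, where $\ell_\lambda := -\frac{d^2}{dr^2} + \lambda r^{-2}$ is the Friedrichs (for $\lambda\ge 3/4$; minimal-via-Hardy for $-1/4\le\lambda<3/4$) realization in $L^2(\R_+)$; one has to be a little careful here that the one-dimensional operator whose form closure matches the restriction of the $d$-dimensional form is indeed $\ell_\lambda$ with the correct boundary behaviour at $0$, which is standard. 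Consequently
\[
\me{-tL_\lambda^{(2)}}(x,y) = (4\pi t)^{-(d-1)/2}\, \me{-|x'-y'|^2/(4t)}\, \me{-t\ell_\lambda}(x_d,y_d),
\]
so the claim follows once one proves the one-dimensional estimate
\[
\me{-t\ell_\lambda}(r,\rho) \asymp \Bigl(1\wedge\tfrac r{\sqrt t}\Bigr)^{p}\Bigl(1\wedge\tfrac \rho{\sqrt t}\Bigr)^{p} t^{-1/2}\, \me{-c|r-\rho|^2/t},
\qquad r,\rho>0,\ t>0,
\]
with $p = \tfrac12(1+\sqrt{1+4\lambda})$. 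Combining the Gaussian factor in $x'-y'$ with the $t^{-1/2}$ and the one in $r-\rho$ reconstructs the full $t^{-d/2}\me{-c|x-y|^2/t}$ (the product of two Gaussians of the same width is, up to constants adjustable in $c$, a Gaussian of that width in the combined variables), and the two $(1\wedge \cdot)^p$ factors are exactly those in \eqref{eq:localheatkernelhardy}.

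For the one-dimensional estimate I would use the classical fact that $\ell_\lambda$ is exactly solvable: the equation $\ell_\lambda h = 0$ has the two solutions $r^{p}$ and $r^{1-p}$ (or $r^{1/2},\ r^{1/2}\log r$ in the borderline case $\lambda=-1/4$, $p=1/2$), and $r^p$ is the positive solution singled out by the boundary condition at $0$ that corresponds to our operator. Performing the ground-state substitution $u = r^{p} v$ conjugates $\ell_\lambda$ to the Bessel-type operator $-r^{-2p}\partial_r(r^{2p}\partial_r)$, i.e.\ (after the further change of variables identifying it with a radial Laplacian in fractional dimension $n = 2p+1 > 0$) to the generator of a Bessel process. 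The transition density of the Bessel process of dimension $n$ is given explicitly in terms of the modified Bessel function $I_{(n-2)/2}$, and two-sided bounds for it — including precisely the small-argument behaviour $I_\nu(z)\sim c z^\nu$ that produces the factors $(r/\sqrt t)^p$, $(\rho/\sqrt t)^p$ — are classical. Undoing the $h$-transform, $\me{-t\ell_\lambda}(r,\rho) = r^p\rho^p\, q^{(n)}_t(r,\rho)$ where $q^{(n)}_t$ is the Bessel transition density (suitably normalized), and inserting the Bessel bounds yields exactly the displayed one-dimensional estimate, with the $r^p\rho^p$ from the transform combining with the $r^{-p}$, $\rho^{-p}$ hidden in the small-argument Bessel asymptotics to give the bounded factors $(1\wedge r/\sqrt t)^p(1\wedge\rho/\sqrt t)^p$. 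An alternative, should one prefer to avoid Bessel special functions, is to prove the one-dimensional two-sided bound directly by the same heat-kernel-comparison method used for $\alpha<2$: the upper bound from a parabolic maximum principle / Duhamel comparison against the free heat kernel together with the supersolution $r^p$, and the lower bound from the explicit supersolution combined with a standard chaining (Harnack) argument away from the boundary and the known boundary decay rate near it.

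The main obstacle I anticipate is not any single hard estimate but rather the careful bookkeeping at the boundary $\lambda$-values and in the regime $r,\rho\ll\sqrt t$: one must check that the self-adjoint realization of $\ell_\lambda$ obtained by restricting the $d$-dimensional form (equivalently, the Friedrichs extension for $\lambda\ge 3/4$, and the unique nonnegative extension for $-1/4\le\lambda<3/4$) is exactly the one whose heat kernel behaves like $r^p$ rather than $r^{1-p}$ at the origin — in the subcritical window $-1/4<\lambda<3/4$ both $r^p$ and $r^{1-p}$ are locally $L^2$, so the identification of the correct exponent is the delicate point and is precisely what fixes the branch $p=\tfrac12(1+\sqrt{1+4\lambda})\ge\tfrac12$ in \eqref{eq:defp2}. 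Once that is pinned down, the remaining work — the tensorization identity, the explicit Bessel computation or the comparison argument, and the recombination of Gaussian factors — is routine. As with the $\alpha<2$ case, one could also simply cite the literature on heat kernels for Schrödinger operators with inverse-square potentials on the half-line (e.g.\ the Bessel-process description, or Gueorguiev–Milani–Taglialatela and related works); but since the statement is clean and self-contained, giving the short $h$-transform proof is preferable.
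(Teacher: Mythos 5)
Your proposal follows essentially the same route as the paper: tensorize to reduce to $d=1$, conjugate by the ground state $r^{p}$ to a Bessel-type operator, use the explicit heat kernel $\me{-t\cL_\mu}(r,s)=(2t)^{-1}(rs)^{-\mu}\exp(-\tfrac{r^2+s^2}{4t})I_\mu(\tfrac{rs}{2t})$ with $\mu=p-\tfrac12$, and identify the correct self-adjoint realization by matching quadratic forms on $C^1_c(\R_+)$ (which is how the paper pins down the branch $r^p$ versus $r^{1-p}$ — the unitary $(Uf)(x)=x^{\mu+1/2}f(x)$ preserves $C^1_c(\R_+)$ and intertwines the forms, and both operators are Friedrichs extensions off that core).

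One step is glossed over, and it is the only place in the paper's Step 2 where real work happens. The Bessel asymptotics $I_\mu(z)\sim cz^\mu$ for $z\to0$ and $I_\mu(z)\sim z^{-1/2}e^{z}/\sqrt{2\pi}$ for $z\to\infty$ give the two-sided bound with the factor $\bigl(1\wedge\frac{rs}{t}\bigr)^{\mu+1/2}$, since the argument of $I_\mu$ is the \emph{product} $rs/2t$, not $r/\sqrt t$ and $s/\sqrt t$ separately. In the mixed regime $r\le\sqrt t\le s$ one has $\bigl(1\wedge\frac{rs}{t}\bigr)^{p}\ge\bigl(1\wedge\frac{r}{\sqrt t}\bigr)^{p}\bigl(1\wedge\frac{s}{\sqrt t}\bigr)^{p}$ with possibly large ratio, so the product form is \emph{not} an immediate consequence of "inserting the Bessel bounds"; one must absorb the discrepancy $s^{p}$ (resp.\ $r^{-p}$) into the Gaussian by strictly decreasing the constant $c$ in the exponent. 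This is exactly why the theorem is stated with $\asymp$ rather than $\sim$, and the paper devotes the inequality $(1\wedge rs)^{p}e^{-(r-s)^2}\lesssim r^{p}e^{-c(r-s)^2}$ for $r\le\tfrac12\le s$ to it. Adding that elementary but non-optional estimate would complete your argument.
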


While an explicit expression of the heat kernel of $L_\lambda$ for $\alpha=2$ is available, it leads to a somewhat different heat kernel bound and we explain in Appendix \ref{s:localheatkernel} how to obtain the bound stated in Theorem \ref{localheatkernelhardy}, where one is willing to give up something in the constant $c$ in the exponent, but insists on the product structure of the prefactor.

\section{Riesz kernel bounds}

In this section we use the heat kernel bounds from the previous section to prove two-sided bounds on the kernels of the Riesz operators $L_\lambda^{-s/2}$. They are crucial for the proof of the generalized Hardy inequality (Theorem~\ref{genhardy}).

\begin{theorem}
  \label{riesz}
  Let
  $\alpha\in(0,2]$ and let
  $\lambda\geq0$ when $\alpha\in(0,2)$ and $\lambda\geq-1/4$ when $\alpha=2$.
 Let $p$ be defined by \eqref{eq:defp} and let
  $s\in(0,\frac{2d}{\alpha}\wedge\frac{2(d+2p)}{\alpha})$. Then the following holds.
  \begin{enumerate}
  \item[(a)] For all $x,y\in\R^d_+$ with $|x-y|\leq x_d\vee y_d$,
  \begin{align}
      \label{eq:riesz}
      L_\lambda^{-s/2}(x,y)  \sim_{d,\alpha,\lambda,s} |x-y|^{\alpha\frac{s}{2}-d}\left(1\wedge\frac{x_d}{|x-y|} \wedge\frac{y_d}{|x-y|}\right)^{p}.
  \end{align}

  \item[(b)] For all $x,y\in\R^d_+$ with $x_d \vee y_d \leq |x-y|$,
    \begin{align}
      \label{eq:rieszimprovedsim}
      \begin{split}
        & L_\lambda^{-s/2}(x,y)
        \sim_{d,\alpha,\lambda,s} |x-y|^{\alpha\frac{s}{2}-d} \left(\frac{x_d\,y_d}{|x-y|^2}\right)^p \cdot \Biggl[\one_{\alpha=2} \\
        & \quad + \left(\one_{p\leq\frac{\alpha}{2}(1+\frac s2)} + \left(\ln\frac{|x-y|}{x_d\vee y_d}\right)\one_{p=\frac{\alpha}{2}(1+\frac s2)} + \left( \frac{|x-y|}{x_d\vee y_d}\right)^{2p-\alpha(1+\frac s2)}\one_{p>\frac{\alpha}{2}(1+\frac s2)} \right) \one_{\alpha<2} \Biggr].
      \end{split}
    \end{align}
  \end{enumerate}
\end{theorem}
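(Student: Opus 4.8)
\emph{Proof proposal.}
The natural starting point is subordination. Since $L_\lambda\geq 0$, the spectral theorem gives $L_\lambda^{-s/2}=\Gamma(s/2)^{-1}\int_0^\infty t^{s/2-1}\me{-tL_\lambda}\,dt$, and passing to integral kernels with the help of Tonelli's theorem and the positivity of the heat kernel yields
\begin{equation*}
	L_\lambda^{-s/2}(x,y)=\frac{1}{\Gamma(s/2)}\int_0^\infty t^{\frac s2-1}\,\me{-tL_\lambda}(x,y)\,dt \,.
\end{equation*}
We insert the two-sided heat kernel bound \eqref{eq:heatkernel} (for $\alpha<2$), respectively \eqref{eq:localheatkernelhardy} (for $\alpha=2$), and substitute $t=r^\alpha$. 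Writing $a:=x_d\wedge y_d$, $b:=x_d\vee y_d$ and $R:=|x-y|$ (so that always $R\geq b-a$), this reduces the problem, up to multiplicative constants, to the two-sided estimation of
\begin{equation*}
	I:=\int_0^\infty r^{\frac{\alpha s}{2}-1-d}\Bigl(1\wedge\frac ar\Bigr)^{p}\Bigl(1\wedge\frac br\Bigr)^{p}\,K\Bigl(\frac rR\Bigr)\,dr \,,
\end{equation*}
where $K(\rho)=(1\wedge\rho)^{d+\alpha}$ if $\alpha<2$ and $K(\rho)=\me{-c/\rho^2}$ if $\alpha=2$; in the latter case $c>0$ may differ in the upper and the lower bound, but a rescaling of $\rho$ shows that this only affects the final constants. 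The hypothesis $s<\tfrac{2(d+2p)}{\alpha}$ is exactly what makes $I$ converge at $r=\infty$, where the integrand is $\sim(ab)^p r^{\frac{\alpha s}{2}-1-d-2p}$; convergence at $r=0$ holds for all $s>0$.

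Everything now reduces to a one-dimensional computation. The three points $r=a\leq b$ and $r=R$ split $(0,\infty)$ into at most four intervals, on each of which the two ``$1\wedge$'' factors and, away from the leftmost interval, also $K(r/R)$ are comparable to constants, so that the integrand is comparable to $r^{\gamma-1}$ for an explicit exponent $\gamma$ depending on the interval; on the leftmost interval one uses instead $\me{-c/\rho^2}\lesssim_N\rho^N$ (any $N\geq 0$) when $\alpha=2$, so that there too the integrand is a pure power with a sufficiently large exponent. The contribution of each interval is then comparable to a product of powers of $a,b,R$: it is governed by the left endpoint when $\gamma<0$, by the right endpoint when $\gamma>0$, and equals a constant times the logarithm of the ratio of the endpoints when $\gamma=0$. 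Using $s<\tfrac{2d}{\alpha}$, $s<\tfrac{2(d+2p)}{\alpha}$ and the elementary bounds $0\leq p<M$ coming from the definition of $p$ and our hypothesis on $\lambda$, the sign of each $\gamma$ is determined, and one adds the nonnegative contributions and reads off the largest. In case (a), where $R\leq b$, one distinguishes $R\leq a$ and $a<R\leq b$ and finds in both that $r\sim R$ is the dominant scale and the other intervals are controlled by the same quantity; this gives $I\sim R^{\frac{\alpha s}{2}-d}\bigl(1\wedge\frac aR\bigr)^p$, which is \eqref{eq:riesz}. In case (b), where $b\leq R$, the intervals $r<b$ are again controlled, the interval $r>R$ contributes $R^{\frac{\alpha s}{2}-d}(ab/R^2)^p$, and --- for $\alpha<2$ --- on the interval $b\leq r\leq R$ the integrand is a constant times $(ab)^p R^{-(d+\alpha)}r^{\frac{\alpha s}{2}+\alpha-2p-1}$, so the sign of $\tfrac{\alpha s}{2}+\alpha-2p$, i.e.\ the position of $p$ relative to $\tfrac\alpha2(1+\tfrac s2)$, produces precisely the three alternatives in \eqref{eq:rieszimprovedsim} (bounded factor / logarithm / the power $(|x-y|/(x_d\vee y_d))^{2p-\alpha(1+s/2)}$); for $\alpha=2$ the Gaussian $K$ suppresses the range $r<R$, leaving only $R^{\frac{\alpha s}{2}-d}(ab/R^2)^p$, i.e.\ the term $\one_{\alpha=2}$. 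For the lower bound in case (b) it suffices to estimate $I$ from below by the integral over the single interval $[b,R]$ when $\alpha<2$, respectively $[R,2R]$ when $\alpha=2$.

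I expect the main difficulty to be organizational rather than conceptual: one has to run through the three orderings of $a,b,R$ and, within each, through the subintervals and the sign of every exponent, while carrying the upper and the lower heat kernel bounds in parallel (this is painless, since each step uses only positivity of the integrand and monotonicity of the integral). The one point requiring genuine care is to verify that the ``extra factor'' in case (b), which is always $\geq 1$, really shows up in the \emph{lower} bound as well; this is what the one-interval lower bound just mentioned provides. A minor additional check is that, for $\alpha=2$, keeping the full Gaussian (rather than bounding it crudely by $1$) is needed to control the intervals with $r$ small, for which the estimate $\me{-c/\rho^2}\lesssim_N\rho^N$ with $N$ large is the convenient device.
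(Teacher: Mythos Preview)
Your proposal is correct and follows essentially the same route as the paper: subordination, insertion of the two-sided heat kernel bounds, and a one-dimensional splitting of the $t$-integral at the thresholds set by $x_d$, $y_d$ and $|x-y|$, with the case distinction according to the ordering of these three scales. The paper normalizes slightly differently --- it changes variables to $\tau$ with $t=|x-y|^\alpha/\tau$, sets $T=(|x-y|/x_d)^\alpha$, $S=(|x-y|/y_d)^\alpha$, and proves a clean lemma about $\int_0^\infty \tau^{-1-s/2}(1\wedge\tau^{d/\alpha+1})(1\wedge(\tau/T)^{1/\alpha})^p(1\wedge(\tau/S)^{1/\alpha})^p\,d\tau/\tau$ under the constraint $|T^{-1/\alpha}-S^{-1/\alpha}|\leq 1$ --- but its three cases $S\leq T\leq 1$, $S\leq 1\leq T$, $1\leq S\leq T$ correspond exactly to your sub-cases $R\leq a$, $a<R\leq b$, $b\leq R$, and the piecewise power analysis is the same.
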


\begin{remark}
	\label{rieszrem}
	Let $\alpha\in(0,2)$, $\lambda\in[\lambda_*,0)$ and assume that $\me{-tL_\lambda}(x,y)$ satisfies the bound in \eqref{eq:heatkernel} with $p$ defined by \eqref{eq:defp}. Then \eqref{eq:riesz} and \eqref{eq:rieszimprovedsim} remain valid. Similarly, the upper (resp.~lower) bound in \eqref{eq:heatkernel} implies the upper (resp.~lower) bound in \eqref{eq:riesz} and \eqref{eq:rieszimprovedsim}. This follows by the same arguments as in the proof below.
\end{remark}

Note that when $\alpha=2$ or when $\alpha<2$ and $p<\frac\alpha2(1+\frac s2)$ the bound in the theorem can be written as
\begin{equation}
	\label{eq:rieszexpected}
	L_\lambda^{-s/2}(x,y)  \sim_{d,\alpha,\lambda,s} |x-y|^{\alpha\frac{s}{2}-d} \left(1\wedge\frac{x_d}{|x-y|} \right)^{p} \left(1 \wedge\frac{y_d}{|x-y|}\right)^{p}
\end{equation}
for all $x,y\in\R^d_+$. This is reminiscent of the Riesz kernel bounds in \cite{Killipetal2016,Franketal2021}. Remarkably, a bound of this form does not hold globally when $\alpha<2$ and $p\geq\frac\alpha2 (1+\frac s2)$, and in the region $x_d \vee y_d \leq |x-y|$ the Riesz kernel is larger than the right side in \eqref{eq:rieszexpected}. This is a consequence of the slow off-diagonal decay of the heat kernel in the case $\alpha<2$. We will see in the following sections that this worse behavior does not lead to additional restrictions in the generalized Hardy inequality or the reverse Hardy inequality.

\begin{proof}
  By the spectral theorem, the Riesz kernel can be represented as
  \begin{align*}
    L_\lambda^{-s/2}(x,y) = \frac{1}{\Gamma(s/2)}\int_0^\infty\me{-tL_{\lambda}}(x,y)\,t^{s/2} \,\frac{dt}{t} \,.
  \end{align*}
  Inserting the two-sided bounds for $\me{-tL_{\lambda}}(x,y)$ in \eqref{eq:heatkernel} and \eqref{eq:localheatkernelhardy} and changing variables, we see that the left side of \eqref{eq:riesz} is comparable to
  \begin{align*}
    \begin{split}
      &\int_0^\infty \frac{dt}{t}\, t^{-\frac d\alpha+\frac{s}{2}}
      \left(1\wedge\frac{x_d}{t^{1/\alpha}}\right)^p
      \left(1\wedge\frac{y_d}{t^{1/\alpha}}\right)^p \left[\left(1\wedge\frac{t^{1+d/\alpha}}{|x-y|^{d+\alpha}}\right)\one_{\alpha<2}+\exp\left(-\frac{c|x-y|^2}{t}\right)\one_{\alpha=2}\right]\\
      & \quad = |x-y|^{\alpha\frac{s}{2}-d}
      \int_0^\infty \frac{d\tau}{\tau}\, \tau^{-1-\frac s2} \left[\left( 1 \wedge \tau^{\frac{d}{\alpha}+1} \right)\one_{\alpha<2} + \tau^{\frac d2+1}\me{-c\tau}\one_{\alpha=2}\right] \\
      & \qquad\qquad\qquad\qquad\qquad\qquad \times \left(1\wedge\frac{x_d\,\tau^{1/\alpha}}{|x-y|}\right)^p \left(1\wedge\frac{y_d\,\tau^{1/\alpha}}{|x-y|}\right)^p
    \end{split}
  \end{align*}
  for certain $c>0$, possibly different for the upper and lower bounds. The integral is similar to that in \cite[Lemma~5.2]{Killipetal2016} (or \cite[(2.3)]{Franketal2021}, but with $x_d$ and $y_d$ in place of $|x|$ and $|y|$ and $p$ in place of $-\delta$). There are, however, some differences, in particular in the case $\alpha<2$ and $p\geq\frac\alpha 2(1+\frac s2)$, so we include the details of the bounds.
  
  We shall show that for all $T,S>0$ with $|T^{-\frac1\alpha}-S^{-\frac1\alpha}|\leq 1$ we have
  \begin{align*}
    & \int_0^\infty \frac{d\tau}{\tau}\, \tau^{-1-\frac s2} \left[\left( 1 \wedge \tau^{\frac{d}{\alpha}+1} \right)\one_{\alpha<2} + \tau^{\frac d2+1}\me{-c\tau}\one_{\alpha=2}\right] 
    \left(1\wedge (\tau/T)^{1/\alpha} \right)^p \left(1\wedge (\tau/S)^{1/\alpha} \right)^p \\
    & \sim
    \begin{cases}
      \left( 1\wedge T^{-\frac1\alpha} \wedge S^{-\frac1\alpha} \right)^p & \text{if}\ T \wedge S\leq 1,\\
      \begin{aligned}
        & (TS)^{-\frac p\alpha} \Bigl[\one_{\alpha=2} \\
        & \ + \left(\one_{p\leq\frac{\alpha}{2}(1+\frac s2)} + \ln \left(T\wedge S\right)\one_{p=\frac{\alpha}{2}(1+\frac s2)} + (T\wedge S)^{\frac{2p}{\alpha}-1-\frac s2} \right) \one_{\alpha<2} \Bigr]
      \end{aligned} & \text{if}\ T\wedge S\geq 1.
    \end{cases}
  \end{align*}
  Setting $T:= (|x-y|/x_d)^\alpha$, $S:=(|x-y|/y_d)^\alpha$, we easily deduce from this the assertion. Note that the bound $|T^{-\frac1\alpha}-S^{-\frac1\alpha}|\leq 1$ comes from $|x_d-y_d|\leq|x-y|$.
  
  To prove the above assertion, by symmetry we may assume that $S\leq T$.
  
  \medskip
  \textit{Case $S\leq T\leq 1$}. In this case we have $S^{-\frac1\alpha}\leq T^{-\frac1\alpha}+1\leq 2 T^{-\frac1\alpha}$ and so $S\sim T$. Thus, the relevant integral is comparable to
  $$
  \int_0^\infty \frac{d\tau}{\tau}\, \tau^{-1-\frac s2} \left[\left( 1 \wedge \tau^{\frac{d}{\alpha}+1} \right)\one_{\alpha<2} + \tau^{\frac d2+1}\me{-c\tau}\one_{\alpha=2}\right] 
  \left(1\wedge (\tau/T)^{1/\alpha} \right)^{2p} \,,
  $$
  and we claim that this is comparable to $1$. Indeed,
  \begin{align*}
  	& \int_T^\infty \frac{d\tau}{\tau}\, \tau^{-1-\frac s2} \left[\left( 1 \wedge \tau^{\frac{d}{\alpha}+1} \right)\one_{\alpha<2} + \tau^{\frac d2+1}\me{-c\tau}\one_{\alpha=2}\right] 
  	\left(1\wedge (\tau/T)^{1/\alpha} \right)^{2p} \\
	& = \int_T^\infty \frac{d\tau}{\tau}\, \tau^{-1-\frac s2} \left[\left( 1 \wedge \tau^{\frac{d}{\alpha}+1} \right)\one_{\alpha<2} + \tau^{\frac d2+1}\me{-c\tau}\one_{\alpha=2}\right] \sim 1 \,,
  \end{align*}
  since $T\leq 1$ and since the integral converges at both zero (according to the assumption $\frac s2<\frac d\alpha$) and infinity. For a lower bound we drop the integral between $0$ and $T$ and for an upper bound, we estimate it by
  \begin{align*}
  	& \int_0^T \frac{d\tau}{\tau}\, \tau^{-1-\frac s2} \left[\left( 1 \wedge \tau^{\frac{d}{\alpha}+1} \right)\one_{\alpha<2} + \tau^{\frac d2+1}\me{-c\tau}\one_{\alpha=2}\right] 
  	\left(1\wedge (\tau/T)^{1/\alpha} \right)^{2p} \\
  	& \leq \int_0^T \frac{d\tau}{\tau}\, \tau^{-1-\frac s2} \tau^{\frac{d}{\alpha}+1} (\tau/T)^\frac{2p}{\alpha}
  	\sim T^{-\frac s2+\frac d\alpha} \leq 1 \,,
  \end{align*}
  since $\frac{\alpha s}{2}<d\wedge (d+2p)$ ensures the convergence of the integral and the last inequality.
  
  \medskip
  \textit{Case $S\leq 1\leq T$}. In this case we have $S^{-\frac1\alpha}\leq T^{-\frac1\alpha} + 1\leq 2$ and so $S\sim 1$. Thus, the relevant integral is comparable to
  $$
  \int_0^\infty \frac{d\tau}{\tau}\, \tau^{-1-\frac s2} \left[\left( 1 \wedge \tau^{\frac{d}{\alpha}+1} \right)\one_{\alpha<2} + \tau^{\frac d2+1}\me{-c\tau}\one_{\alpha=2}\right] 
  \left(1\wedge (\tau/T)^{1/\alpha} \right)^p \left(1\wedge \tau^{1/\alpha} \right)^p \,,
  $$
  and we claim that this is comparable to $T^{-\frac p\alpha}$. Indeed, we have, using $\frac{s}{2}<\frac{d+2p}{\alpha}$,
  \begin{align*}
  	& \int_0^1 \frac{d\tau}{\tau}\, \tau^{-1-\frac s2} \left[\left( 1 \wedge \tau^{\frac{d}{\alpha}+1} \right)\one_{\alpha<2} + \tau^{\frac d2+1}\me{-c\tau}\one_{\alpha=2}\right] 
  	\left(1\wedge (\tau/T)^{1/\alpha} \right)^p \left(1\wedge \tau^{1/\alpha} \right)^p \\
  	& \sim T^{-\frac p\alpha} \int_0^1 \frac{d\tau}{\tau}\, \tau^{-1-\frac s2} \tau^{\frac{d}{\alpha}+1} \tau^\frac{2p}\alpha \sim T^{-\frac p\alpha} \,.
  \end{align*} 
  For a lower bound we drop the integral between $1$ and $\infty$ and for an upper bound, we estimate, using $1+\frac{s}{2}-\frac{p}{\alpha}>0$ when $\alpha<2$ (as a consequence of $s>0$ and $p<\alpha$),
  \begin{align*}
  	& \int_1^T \frac{d\tau}{\tau}\, \tau^{-1-\frac s2} \left[\left( 1 \wedge \tau^{\frac{d}{\alpha}+1} \right)\one_{\alpha<2} + \tau^{\frac d2+1}\me{-c\tau}\one_{\alpha=2}\right] 
  	\left(1\wedge (\tau/T)^{1/\alpha} \right)^p \left(1\wedge \tau^{1/\alpha} \right)^p \\
  	& \sim \int_1^T \frac{d\tau}{\tau}\, \tau^{-1-\frac s2} \left[ \one_{\alpha<2} + \tau^{\frac d2+1}\me{-c\tau}\one_{\alpha=2}\right] (\tau/T)^{\frac p\alpha} \lesssim T^{-\frac p\alpha}
  \end{align*}
  and, using again $1+\frac{s}{2}-\frac{p}{\alpha}\geq 0$ when $\alpha<2$,
  \begin{align*}
  	& \int_T^\infty \frac{d\tau}{\tau}\, \tau^{-1-\frac s2} \left[\left( 1 \wedge \tau^{\frac{d}{\alpha}+1} \right)\one_{\alpha<2} + \tau^{\frac d2+1}\me{-c\tau}\one_{\alpha=2}\right] 
  	\left(1\wedge (\tau/T)^{1/\alpha} \right)^p \left(1\wedge \tau^{1/\alpha} \right)^p \\
  	& = \int_T^\infty \frac{d\tau}{\tau}\, \tau^{-1-\frac s2} \left[ \one_{\alpha<2} + \tau^{\frac d2+1}\me{-c\tau}\one_{\alpha=2}\right] 
  	\sim T^{-1-\frac s2} \one_{\alpha<2} + T^{-1-\frac s2 + \frac d2} e^{-cT} \one_{\alpha=2} \lesssim T^{-\frac p\alpha} \,.
  \end{align*}
  
  \medskip
  \textit{Case $1\leq S\leq T$}. We split the relevant integral into three pieces, by cutting at $S$ and at $T$. For the first integral we find, using $s<\frac{2(d+2p)}\alpha$,
  \begin{align*}
  	& \int_0^S \frac{d\tau}{\tau}\, \tau^{-1-\frac s2} \left[\left( 1 \wedge \tau^{\frac{d}{\alpha}+1} \right)\one_{\alpha<2} + \tau^{\frac d2+1}\me{-c\tau}\one_{\alpha=2}\right] 
  	\left(1\wedge (\tau/T)^{1/\alpha} \right)^p \left(1\wedge (\tau/S)^{1/\alpha} \right)^p \\
  	& = (ST)^{-\frac p\alpha} \int_0^S \frac{d\tau}{\tau}\, \tau^{-1-\frac s2} \left[\left( 1 \wedge \tau^{\frac{d}{\alpha}+1} \right)\one_{\alpha<2} + \tau^{\frac d2+1}\me{-c\tau}\one_{\alpha=2}\right] \tau^{\frac{2p}\alpha} \\
  	& \sim (ST)^{-\frac p\alpha} \left( \left( \one_{p\leq\frac\alpha2(1+\frac s2)} + (\ln S) \one_{p=\frac\alpha 2(1+\frac s2)} + S^{\frac{2p}\alpha - 1 - \frac s2} \one_{p>\frac\alpha 2(1+\frac s2)} \right) \one_{\alpha<2} + \one_{\alpha=2} \right).
  \end{align*}
  This term is of the claimed form. Thus, for a lower bound we can drop the integral between $S$ and $\infty$.
  
  We bound the second integral from above by
  \begin{align*}
  	& \int_S^T \frac{d\tau}{\tau}\, \tau^{-1-\frac s2} \left[\left( 1 \wedge \tau^{\frac{d}{\alpha}+1} \right)\one_{\alpha<2} + \tau^{\frac d2+1}\me{-c\tau}\one_{\alpha=2}\right] 
  	\left(1\wedge (\tau/T)^{1/\alpha} \right)^p \left(1\wedge (\tau/S)^{1/\alpha} \right)^p \\
  	& = T^{-\frac p\alpha} \int_S^T \frac{d\tau}{\tau}\, \tau^{-1-\frac s2} \left[ \one_{\alpha<2} + \tau^{\frac d2+1}\me{-c\tau}\one_{\alpha=2}\right]  \tau^\frac p\alpha \\
  	& \lesssim T^{-\frac p\alpha} \left( S^{-1-\frac s2+\frac p\alpha} \one_{\alpha<2} + S^{-1-\frac s2+\frac p\alpha +\frac d2} e^{-cS} \one_{\alpha=2} \right).
  \end{align*}
  When $\alpha<2$ and $p>\frac\alpha2(1+\frac s2)$, this upper bound equals the size of the first integral, and for $p\leq \frac\alpha2(1+\frac s2)$ we bound $T^{-\frac p\alpha} S^{-1-\frac s2+\frac p\alpha}\leq (TS)^{-\frac p\alpha}$. When $\alpha=2$, have clearly $T^{-\frac p\alpha} S^{-1-\frac s2+\frac p\alpha +\frac d2} e^{-cS}\lesssim (TS)^{-\frac p\alpha}$.
  
  We bound the third integral exactly as in the case $S\leq 1\leq T$ and obtain
  \begin{align*}
  	& \int_T^\infty \frac{d\tau}{\tau}\, \tau^{-1-\frac s2} \left[\left( 1 \wedge \tau^{\frac{d}{\alpha}+1} \right)\one_{\alpha<2} + \tau^{\frac d2+1}\me{-c\tau}\one_{\alpha=2}\right] 
  	\left(1\wedge (\tau/T)^{1/\alpha} \right)^p \left(1\wedge (\tau/S)^{1/\alpha} \right)^p \\
  	& = \int_T^\infty \frac{d\tau}{\tau}\, \tau^{-1-\frac s2} \left[ \one_{\alpha<2} + \tau^{\frac d2+1}\me{-c\tau}\one_{\alpha=2}\right] 
  	\sim T^{-1-\frac s2} \one_{\alpha<2} + T^{-1-\frac s2 + \frac d2} e^{-cT} \one_{\alpha=2}  \,.
  \end{align*}
  When $\alpha<2$ and $p\leq\frac\alpha2(1+\frac s2)$, we bound
  $$
  T^{-1-\frac s2}\leq (ST)^{-\frac12(1+\frac s2)}
  \leq (ST)^{-\frac p\alpha} \left( \one_{p\leq\frac\alpha2(1+\frac s2)} + (\ln S) \one_{p=\frac\alpha 2(1+\frac s2)} \right)
  $$
  and when $\alpha<2$ and $p>\frac\alpha2(1+\frac s2)$, we bound, recalling $\frac{p}{\alpha}<1+\frac s2$,
  $$
  T^{-1-\frac s2}\leq (ST)^{-\frac p\alpha} S^{\frac{2p}\alpha - 1-\frac s2} \,.
  $$
  When $\alpha=2$, we have $p\geq\frac12>0$ and therefore $T^{-1-\frac s2 + \frac d2} e^{-cT}\lesssim (ST)^{-\frac p\alpha}$. This completes the proof.
\end{proof}

\section{Proof of the generalized Hardy inequality (Theorem \ref{genhardy})}
\label{s:genhardy}

We first prove a theorem that is closely related to Theorem \ref{genhardy}.

\begin{theorem}
	\label{genhardybdd}
	Let
	$\alpha\in(0,2]$ and let
	$\lambda\geq0$ when $\alpha\in(0,2)$ and $\lambda\geq-1/4$ when $\alpha=2$.
	Let $p$ be defined by \eqref{eq:defp}. Then, if $s\in(0,\frac{1+2p}{\alpha} \wedge \frac{2d}\alpha)$, one has
	\begin{align}
		\label{eq:genhardybdd}
		\| x_d^{-\alpha s/2} L_\lambda^{-s/2} g\|_{L^2(\R_+^d)}
		\lesssim_{d,\alpha,\lambda,s} \| g \|_{L^2(\R_+^d)}
		\qquad\text{for all}\ g\in L^2(\R_+^d) \,.
	\end{align}
	Conversely, if \eqref{eq:genhardybdd} holds for some $s\in(0,\frac{2d}{\alpha}\wedge\frac{2(d+2p)}{\alpha})$, then $s<\frac{1+2p}\alpha$.
\end{theorem}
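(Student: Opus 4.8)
The plan is to reduce both implications to a one-dimensional statement by exploiting the invariance of $L_\lambda^{(\alpha)}$ under translations parallel to the boundary. Since $L_\lambda^{-s/2}(x,y)$ depends on $(x',y')$ only through $x'-y'$, I would introduce the reduced kernel
\[
  \widetilde L(a,b):=\int_{\R^{d-1}}L_\lambda^{-s/2}\bigl((0,a),(y',b)\bigr)\,dy',\qquad a,b>0,
\]
which is symmetric in $a,b$ and, by the dilation covariance of $L_\lambda^{(\alpha)}$, homogeneous: $\widetilde L(a,b)=a^{\frac{\alpha s}{2}-1}\Phi(b/a)$ for some $\Phi\colon(0,\infty)\to(0,\infty]$. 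The whole argument rests on the behaviour of $\Phi$, which I would obtain by integrating the two-sided bounds of Theorem~\ref{riesz} over $y'\in\R^{d-1}$, splitting according to whether $|x-y|\le x_d\vee y_d$ and, in the opposite case, whether $|y'|\lesssim x_d\vee y_d$. A direct computation gives $\widetilde L(a,b)\sim a^{p}b^{\frac{\alpha s}{2}-1-p}$ for $b\gg a$ (and symmetrically for $a\gg b$), with at worst an integrable singularity $\sim|r-1|^{\frac{\alpha s}{2}-1}$ at $r=1$, so $\Phi(r)\sim r^{p}$ as $r\to0$ and $\Phi(r)\sim r^{\frac{\alpha s}{2}-1-p}$ as $r\to\infty$. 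The point deserving attention is that in the anomalous regime $\alpha<2$, $p>\frac\alpha2(1+\frac s2)$ of Theorem~\ref{riesz}(b) the slowly decaying extra factor $(|x-y|/(x_d\vee y_d))^{2p-\alpha(1+s/2)}$ is compensated exactly by the $|x-y|^{-d-\alpha}$ heat-kernel decay, so the exponent of $\Phi$ at infinity is unaffected; in the complementary regime the $y'$-integral over $\{|y'|\gtrsim x_d\vee y_d\}$ converges precisely because $s<\frac{2(1+2p)}{\alpha}$ (and if $s\ge\frac{2(1+2p)}{\alpha}$ it diverges, giving $\widetilde L\equiv+\infty$).

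For the inequality \eqref{eq:genhardybdd} I would apply the Schur test to the nonnegative kernel $K(x,y)=x_d^{-\alpha s/2}L_\lambda^{-s/2}(x,y)$ with the weight $\omega(x)=x_d^{\gamma}$. Since $\omega$ depends only on $x_d$, Tonelli's theorem collapses $\int_{\R^d_+}K(x,y)\omega(y)\,dy$ to $\omega(x)\int_0^\infty\Phi(r)r^{\gamma}\,dr$ and $\int_{\R^d_+}K(x,y)\omega(x)\,dx$ to $\omega(y)\int_0^\infty\Phi(r)r^{-1-\gamma}\,dr$. By the asymptotics of $\Phi$, both integrals are finite for every $\gamma$ in the interval $\bigl(\frac{\alpha s}{2}-1-p,\ p-\frac{\alpha s}{2}\bigr)$ — the endpoint conditions use $p>-\frac12$ (valid since $p\ge(\alpha-1)_+\ge0$ for $\alpha<2$, $\lambda\ge0$, resp. $p\ge\frac12$ for $\alpha=2$, $\lambda\ge-\frac14$), while the singularity at $r=1$ is harmless because $\frac{\alpha s}{2}-1>-1$ — and this interval is nonempty exactly when $s<\frac{1+2p}{\alpha}$; the symmetric choice $\gamma=-\frac12$ always lies in it. The Schur test then yields $\|K\|_{L^2\to L^2}\le\int_0^\infty\Phi(r)r^{-1/2}\,dr<\infty$, which is \eqref{eq:genhardybdd}.

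For the converse, assume $T:=x_d^{-\alpha s/2}L_\lambda^{-s/2}$ is bounded on $L^2(\R^d_+)$. Testing $T$ against $g(y)=g_0(y_d)\mathbf{1}_{\{|y'|<R\}}$ with $g_0\ge0$ — bounding $\|Tg\|$ from below on the slab $\{|x'|<R/2\}$, where the $y'$-integral of $L_\lambda^{-s/2}$ over $\{|y'|<R\}$ dominates $\int_{|z|<R/2}L_\lambda^{-s/2}((0,x_d),(z,y_d))\,dz$, and comparing with $\|g\|^2\sim R^{d-1}\|g_0\|^2$ — one finds the corresponding truncated one-dimensional operators bounded uniformly in $R$; letting $R\to\infty$ (monotone convergence, then Fatou) shows that the operator $T_1$ on $L^2((0,\infty))$ with kernel $a^{-\alpha s/2}\widetilde L(a,b)=a^{-1}\Phi(b/a)$ is bounded, with $\|T_1\|\lesssim\|T\|$. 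Since $\widetilde L\equiv+\infty$ when $s\ge\frac{2(1+2p)}{\alpha}$, boundedness of $T_1$ already forces $s<\frac{2(1+2p)}{\alpha}$, and then $\Phi$ has the finite asymptotics above. Now $T_1$ is a Mellin convolution: the map $f(a)\mapsto e^{-u/2}f(e^{-u})$ is unitary from $L^2((0,\infty))$ onto $L^2(\R)$ and conjugates $T_1$ to convolution with the \emph{nonnegative} kernel $\psi(w)=e^{-w/2}\Phi(e^{-w})$, whose operator norm on $L^2(\R)$ equals $\int_\R\psi\in[0,+\infty]$. Hence $\psi\in L^1(\R)$; but $\psi(w)$ behaves like $e^{-w(\frac{\alpha s}{2}-p-\frac12)}$ as $w\to-\infty$ (using the lower bound $\Phi(r)\gtrsim r^{\frac{\alpha s}{2}-1-p}$ from Theorem~\ref{riesz}(b)), so integrability at $-\infty$ forces $\frac{\alpha s}{2}-p-\frac12<0$, i.e.\ $s<\frac{1+2p}{\alpha}$.

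The main obstacle is the opening step — extracting the precise behaviour of $\Phi$ from the Riesz kernel bounds — and in particular verifying that the anomalous region of Theorem~\ref{riesz}(b) neither changes the exponent of $\Phi$ at infinity nor destroys the convergence of its defining integral. Once $\Phi$ is understood, the forward direction is a bookkeeping Schur test and the converse a short Mellin-transform computation; the only additional care required is the (elementary but slightly fiddly) justification of the reduction to one dimension in the converse, including the degenerate case $\widetilde L\equiv+\infty$.
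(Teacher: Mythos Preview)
Your proof is correct and takes a genuinely different route from the paper. For sufficiency, the paper applies Schur tests directly in $\R^d_+$: it splits the kernel $x_d^{-\alpha s/2}L_\lambda^{-s/2}(x,y)$ into four pieces according to the relative sizes of $|x-y|$, $x_d$, $y_d$, and runs a separate weighted Schur test (with weights depending on $|x-y|$ as well as on $x_d,y_d$) in each region. You instead integrate out the tangential variable $y'$ first --- exactly the reduction the paper carries out later for the reverse Hardy inequality (Proposition~\ref{schurkillip2}) --- obtain a homogeneous one-dimensional kernel $a^{-1}\Phi(b/a)$, and then a single Schur test with weight $x_d^{\gamma}$ (equivalently, a Hardy--Hilbert-type bound for the Mellin convolution) finishes the job. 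Your treatment of the anomalous regime $p>\tfrac{\alpha}{2}(1+\tfrac{s}{2})$ is correct: the extra factor in Theorem~\ref{riesz}(b) combines with $|x-y|^{\alpha s/2-d-2p}$ to give the integrable decay $|x-y|^{-d-\alpha}$, so the tail exponent of $\Phi$ is indeed unchanged. For necessity, the paper gives a short test-function argument (pick $\phi\in C_c^\infty(\R^d_+)$, use the lower Riesz bound to show $(L_\lambda^{-s/2}\phi)(x)\gtrsim x_d^{p}$ near the boundary, whence $x_d^{-\alpha s/2}L_\lambda^{-s/2}\phi\notin L^2$ when $s\ge(1+2p)/\alpha$). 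Your Mellin argument is more structural: the slab-testing reduction to one dimension is sound, and for a nonnegative convolution kernel boundedness on $L^2(\R)$ is indeed equivalent to $\psi\in L^1$ (your implicit claim), so the lower bound on $\Phi$ from Theorem~\ref{riesz}(b) forces $s<(1+2p)/\alpha$. Your approach is cleaner in that it avoids the four-region case analysis, at the price of having to first pin down the asymptotics of $\Phi$; the paper's direct Schur tests are more hands-on but do not require this preliminary computation.
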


\begin{remark}\label{genhardybddrem} 
	Let $\alpha\in(0,2)$, $\lambda\in[\lambda_*,0)$ and assume that $\me{-tL_\lambda}(x,y)$ satisfies the bound in \eqref{eq:heatkernel} with $p$ defined by \eqref{eq:defp}. Then the assertions of Theorem \ref{genhardybdd} remain valid. Similarly, the upper (resp.\ lower) bound in \eqref{eq:heatkernel} implies the sufficiency (resp.\ necessity) of the assumption $s<\frac{1+2p}{\alpha}$ for the validity of \eqref{eq:genhardybdd}. This follows by the same arguments as in the proof below, taking into account Remark \ref{rieszrem}.
\end{remark}

The basic strategy of the proof is to use Theorem \ref{riesz} in order to replace the operator $L_{\lambda}^{- s/2}$ by one with a more explicit kernel.

\begin{proof}
	We assume throughout that $\alpha$, $\lambda$ and $p$ are as in the statement of the theorem and that $s\in(0,\frac{2d}{\alpha}\wedge\frac{2(d+2p)}{\alpha})$.
	
	\medskip
	
	\emph{Necessity of the assumption $s<\frac{1+2p}{\alpha}$.} 
	We consider a similar example as in \cite[p.~1283]{Killipetal2018}. Let $w=(0,0,...,2)\in\R_+^d$ and $0\leq\phi\in C_c^\infty(\R^d_+)$ with $\phi\geq 1$ in $B_{1/2}(w)$. We use part (a) of Theorem~\ref{riesz}. (More precisely, we also use part (b) to see that the bound in part (a) is also valid for $x_d\vee y_d<|x-y|\leq 2(x_d\vee y_d)$.) This shows that, for $x\in\R_+^d$ with $|x|\leq1$, we have
	\begin{align*}
          \begin{split}
            (L_\lambda^{-s/2}\phi)(x)
            & \geq \int_{\R_+^d}dy\, \one_{|x-y| \leq 2 y_d} L_{\lambda}^{-s/2}(x,y)\phi(y)\\
            & \gtrsim \int_{\R_+^d}dy\, \one_{|x-y| \leq 2 y_d} \one_{|y-w|\leq\frac12} |x-y|^{\alpha\frac{s}{2}-d}\left(1\wedge\frac{x_d}{|x-y|} \wedge\frac{y_d}{|x-y|}\right)^{p}\\
            & \gtrsim x_d^p \int_{\R_+^d}dy\, \one_{|x-y| \leq 2 y_d}\one_{|y-w|\leq\frac12}
            \gtrsim x_d^p\,. 
          \end{split}
	\end{align*}
	In the third inequality we used the fact that $|x-y|\sim 1$ and $1\wedge\frac{x_d}{|x-y|} \wedge\frac{y_d}{|x-y|} \sim \frac{x_d}{|x-y|}$ on the domain of integration. (Indeed, clearly, $\frac12\leq |x-y|\leq |x|+|y-w|+|w| \leq \frac 72$, $x_d\leq 1$ and $y_d\geq\frac32$.) In the fourth inequality we used the fact that the inequality $|x-y|\leq 2y_d$ is satisfied for all $y$ with $|y-w|\leq\frac12$. (Indeed, $|x-y|\leq |x'-y'|+(y_d-x_d)\leq |x'|+|y'|+y_d$, where $|x'|\leq |x|\leq1\leq \frac23y_d$ and $|y'|\leq|y-w|\leq\frac12\leq\frac13 y_d$.)
	
	This allows us to bound
	\begin{align*}
		\|x_d^{-\frac{\alpha s}{2}}L_\lambda^{-s/2}\phi\|_{L^2(\R_+^d)}
		\geq \|\one_{|x|\leq1} x_d^{-\frac{\alpha s}{2}}L_\lambda^{-s/2}\phi\|_{L^2(\R_+^d)}
		\gtrsim \|\one_{|x|\leq1} x_d^{p-\frac{\alpha s}{2}}\|_{L^2(\R_+^d)} \,.
	\end{align*}
	Since the right side is infinite if $s\geq\frac{1+2p}{\alpha}$, we see that the inequality $s<\frac{1+2p}{\alpha}$ is necessary for the validity of \eqref{eq:genhardy}.
	
	\medskip
	
	\emph{Sufficiency of the assumption $s<\frac{1+2p}{\alpha}$.} 
	We shall prove the $L^2(\R_+^d)$-boundedness of the operator with kernel $x_d^{-\frac{\alpha s}{2}} L_\lambda^{-\frac s2}(x,y)$. By the upper bounds in Theorem \ref{riesz}, it suffices to prove the $L^2(\R_+^d)$-boundedness of the operator with kernel $K(x,y)$, defined to be $x_d^{-\frac{\alpha s}{2}}$ times the function appearing in the bounds in Theorem \ref{riesz}. We will divide $K$ into four pieces supported in essentially disjoint sets and show boundedness of the resulting four operators. To that end we perform Schur tests as in \cite[Proposition~3.2]{Killipetal2018} (with $s$ in place of $\frac{\alpha s}2$ and $\sigma$ in place of $-p$). These Schur tests involve weights and the weights are chosen differently for the four different pieces of $K$.
	
	The four regions are defined by $|x-y|\leq 4(x_d\wedge y_d)$, $4x_d\leq |x-y|\leq 4y_d$, $4y_d\leq|x-y|\leq 4x_d$ and $4(x_d\vee y_d)\leq |x-y|$. The factors of $4$ will be convenient in some regions and we observe that Theorem~\ref{riesz} as stated is equivalent to a variant of Theorem~\ref{riesz} where the distinction between parts (a) and (b) includes similar factors of $4$.
	
	\medskip
	\textbf{Case $|x-y|\leq 4(x_d\wedge y_d)$.}
	In this case we have $1\wedge\frac{x_d}{|x-y|}\wedge\frac{y_d}{|x-y|}\sim 1$ and therefore the kernel becomes
	\begin{align*}
		K(x,y) \sim x_d^{-\alpha\frac{s}{2}} |x-y|^{\alpha\frac{s}{2}-d}\,.
	\end{align*}
	For the first half of the Schur test we bound
	\begin{align*}
		\int\limits_{|x-y| \leq 4(x_d\wedge y_d)}x_d^{-\alpha\frac{s}{2}} |x-y|^{\alpha\frac{s}{2}-d}\,dy
		\leq \int\limits_{|x-y| \leq 4x_d}x_d^{-\alpha\frac{s}{2}} |x-y|^{\alpha\frac{s}{2}-d}\,dy
        \lesssim 1 \,.
	\end{align*}
	For the second half of the Schur test, we note that $y_d \leq x_d + |x-y| \leq x_d + 4(x_d\wedge y_d)\leq 5 x_d$ and therefore $x_d^{-\alpha\frac s2}$ in the kernel can be replaced by $y_d^{-\alpha\frac s2}$. Therefore, the second half of the Schur test is similar to the first, and we deduce the $L^2(\R_+^d)$-boundedness of the piece of $K$ in this region.
	
	\medskip
	\textbf{Case $4x_d \leq |x-y| \leq 4y_d$.}
	In this case, we have $1\wedge\frac{x_d}{|x-y|}\wedge\frac{y_d}{|x-y|} \sim \frac{x_d}{|x-y|}$ and therefore the kernel becomes
	\begin{align*}
		K(x,y) = x_d^{p-\frac{\alpha s}2}|x-y|^{\frac{\alpha s}2-d-p} \,.
	\end{align*}
	We perform a Schur test with weight
	\begin{align*}
		w(x,y) = \left(\frac{x_d}{|x-y|}\right)^\beta
		\quad \text{with}\ \frac{\alpha s}{2} - p < \beta < 1+p-\frac{\alpha s}{2} \,.
	\end{align*}
	The assumption $s<\frac{1+2p}\alpha$ guarantees that one can find such a $\beta$. 
	
	For the first half of the Schur test we bound
	\begin{align*}
		\begin{split}
			& \int\limits_{4x_d\leq|x-y|\leq4y_d}w(x,y) \, K(x,y)\,dy
			\lesssim x_d^{-\frac{\alpha s}{2}+p+\beta}\int\limits_{|x-y|\geq x_d/4}|x-y|^{\frac{\alpha s}{2}-d-p-\beta}\,dy
			\lesssim 1 \,,
		\end{split}
	\end{align*}
	where the finiteness of the integral comes from the choice of $\beta$. For the second half of the Schur test we note that in our region we have $|x-y|\geq y_d-x_d \geq y_d-|x-y|/4$, so $|x-y|\geq 4y_d/5$. We bound
	\begin{align*}
          \begin{split}
            & \int\limits_{4x_d\leq|x-y|\leq4y_d}w(x,y)^{-1} \, K(x,y) \,dx\\
            & \quad \lesssim \int\limits_{4y_d/5\leq |x-y| \leq 4y_d} x_d^{-\frac{\alpha s}2+ p-\beta}|x-y|^{\frac{\alpha s}2 -d-p+\beta} \,dx \\
            & \quad = \int\limits_{4/5 \leq |w| \leq 4}(w_d+1)^{-\frac{\alpha s}2+p-\beta} |w|^{\frac{\alpha s}2-d-p+\beta}\one_{\{w_d>-1\}}\,dw <\infty \,,
		\end{split}
	\end{align*}
	where we changed variables $x-y = y_d w$ and where the finiteness of the integral comes from the choice of $\beta$. We deduce the $L^2(\R_+^d)$-boundedness of the piece of $K$ in this region.
	
	\medskip
	\textbf{Case $4y_d \leq |x-y|\leq 4x_d$.}
	In this case, we have $1\wedge\frac{x_d}{|x-y|}\wedge\frac{y_d}{|x-y|} \sim \frac{y_d}{|x-y|}$ and therefore the kernel becomes
	\begin{align*}
		K(x,y) = x_d^{-\frac{\alpha s}2} y_d^p |x-y|^{\frac{\alpha s}2-d-p} \,.
	\end{align*}
	We perform a Schur test with weight
	\begin{align*}
		w(x,y) = \left(\frac{|x-y|}{y_d}\right)^\gamma
		\quad \text{with}\ - p < \gamma < 1+p \,.
	\end{align*}
	Since $p \geq \frac{\alpha-1}2>-\frac12$ it is possible to find such a $\gamma$.	
	
	Similarly as in the previous case (but with $x$ and $y$ interchanged), we have $|x-y|\geq 4x_d/5$ and, in particular, $y_d\leq x_d\sim|x-y|$. Therefore, for the first half of the Schur test we bound
	\begin{align*}
          \begin{split}
            \int\limits_{4y_d\leq|x-y|\leq4x_d} w(x,y) \, K(x,y) \,dy
            & \lesssim x_d^{-d-p+\gamma} \int\limits_{4y_d\leq|x-y|\leq4x_d} y_d^{p-\gamma} \,dy\\
            & \leq x_d^{-d-p+\gamma} \int\limits_{|x'-y'|\leq 4x_d \,,\ y_d\leq x_d} y_d^{p-\gamma} \,dy\\
            & = \int\limits_{|w'|\leq 4 \,,w_d\leq 1} w_d^{p-\gamma} \,dw <\infty \,,
		\end{split}
	\end{align*}
	where the finiteness of the integral comes for the choice of $\gamma$. For the second half of the Schur test we bound, using again $x_d\sim |x-y|$,
	\begin{align*}
		\int\limits_{4y_d\leq|x-y|\leq4x_d} w(x,y)^{-1} \, K(x,y) \,dx
		& \lesssim y_d^{p+\gamma} \int\limits_{|x-y|\geq4y_d}|x-y|^{-d-p-\gamma}\,dx \\
		& = \int\limits_{|w|\geq4}|w|^{-d-p-\gamma}\,dw <\infty \,,
	\end{align*}
	where the finiteness of the integral comes from the choice of $\gamma$. We deduce the $L^2(\R_+^d)$-boundedness of the piece of $K$ in this region.
	
	\medskip
	\textbf{Case $4(x_d \vee y_d) \leq |x-y|$.}
	In this region the kernel is
	\begin{align*}
          \begin{split}
            & K(x,y)
            = x_d^{-\alpha\frac{s}{2}} |x-y|^{\alpha\frac{s}{2}-d} \left(\frac{x_d\,y_d}{|x-y|^2}\right)^p \cdot \Biggl[ \one_{\alpha=2} \\
            & \ + \left(\one_{p\leq\frac{\alpha}{2}(1+\frac s2)} + \left(\ln\frac{|x-y|}{x_d\vee y_d}\right)\one_{p=\frac{\alpha}{2}(1+\frac s2)} + \left( \frac{|x-y|}{x_d\vee y_d}\right)^{2p-\alpha(1+\frac s2)} \one_{p>\frac{\alpha}{2}(1+\frac s2)} \right) \one_{\alpha<2} \Biggr].
          \end{split}
	\end{align*}
	We perform a Schur test with weight
	\begin{align*}
		w(x,y) = \left(\frac{x_d}{|x-y|}\right)^\beta\left(\frac{|x-y|}{y_d}\right)^\gamma
		\ \text{with}\ \tfrac{\alpha s}2-p<\beta<1+p-\tfrac{\alpha s}{2} \,,\ -p<\gamma<1+p \,.
	\end{align*}
	When $\alpha<2$ and $p>\frac\alpha2(1+\frac s2)$, we also assume that
	$$
	-1-\alpha < \beta - \gamma< 1+\alpha \,.
	$$
	A possible parameter choice that satisfies all the constraints is $\beta=\gamma=\frac12$.

	For the first Schur test we bound
	\begin{align*}
		& \int_{4(x_d\vee y_d)\leq |x-y|} w(x,y) K(x,y)\,dy = \sum_{R\in 2^\Z} \int_{4(x_d\vee y_d)\leq |x-y|} \one_{R\leq |x-y|< 2R} w(x,y) K(x,y)\,dy \\
		& \lesssim \sum_{2x_d< R\in 2^\Z} \int_{\R^d_+} \one_{|x'-y'|< 2R} \one_{2y_d< R} w_R(x,y) K_R(x,y)\,dy \,,
	\end{align*}
	where $w_R$ and $K_R$ are defined as $w$ and $K$, but with $|x-y|$ at each occurrence replaced by $R$. For fixed $R\in 2^\Z$ with $R>2x_d$, we carry out the $y'$-integration and, if $\alpha<2$ and $p=\frac\alpha2(1+\frac s2)$, we bound $x_d\vee y_d\geq y_d$. In this way, we obtain
	\begin{align*}
          & \int_{\R^d_+} \one_{|x'-y'|< 2R} \one_{2y_d< R} w_R(x,y) K_R(x,y)\,dy
            \lesssim x_d^{-\frac{\alpha s}{2}+p+\beta}  R^{\frac{\alpha s}{2}-2p-\beta+\gamma-1} \\
          & \times \int_0^{R/2} dy_d\, y_d^{p-\gamma} \cdot \Biggl[ \one_{\alpha=2} \\
          & \qquad\qquad + \left(\one_{p\leq\frac{\alpha}{2}(1+\frac s2)} + \left( \ln \frac{R}{y_d}\right)\one_{p=\frac{\alpha}{2}(1+\frac s2)} + \left( \frac{R}{x_d\vee y_d}\right)^{2p-\alpha(1+\frac s2)} \one_{p>\frac{\alpha}{2}(1+\frac s2)} \right) \one_{\alpha<2} \Biggr] \\
          & \lesssim x_d^{-\frac{\alpha s}{2}+p+\beta}  R^{\frac{\alpha s}{2}-p-\beta}
            + x_d^{\alpha+\beta-\gamma+1} R^{-\alpha-\beta+\gamma-1} \one_{p>\frac\alpha2(1+\frac s2)} \one_{\alpha<2} \,.
	\end{align*}
	Here the assumption $p-\gamma>-1$ guarantees the $y_d$-integral to converge near zero. The additional term in case $\alpha<2$ and $p>\frac\alpha2(1+\frac s2)$ comes from the integral between $0$ and $x_d$.
	
	Summing with respect to $R$ we obtain
	$$
	\sum_{2x_d< R\in 2^\Z} \left( x_d^{-\frac{\alpha s}{2}+p+\beta}  R^{\frac{\alpha s}{2}-p-\beta}
	+ x_d^{\alpha+\beta-\gamma+1} R^{-\alpha-\beta+\gamma-1} \one_{p>\frac\alpha2(1+\frac s2)} \one_{\alpha<2} \right) \sim 1 \,.
	$$
	Here the assumptions $\beta>\frac{\alpha s}{2}-p$ and $\beta-\gamma>-1-\alpha$ guarantee the convergence of the $R$-sum.
	
	For the second Schur test we bound similarly
	\begin{align*}
          & \int_{4(x_d\vee y_d)\leq |x-y|} w(x,y)^{-1} K(x,y)\,dx \\
          & \quad = \sum_{R\in 2^\Z} \int_{4(x_d\vee y_d)\leq |x-y|} \one_{R\leq |x-y|< 2R} w(x,y)^{-1} K(x,y)\,dx \\
          & \quad \lesssim \sum_{2y_d< R\in 2^\Z} \int_{\R^d_+} \one_{|x'-y'|< 2R} \one_{2x_d< R} w_R(x,y)^{-1} K_R(x,y)\,dx \,.
	\end{align*}
	For fixed $R\in 2^\Z$ with $R>2y_d$, we carry out the $x'$-integration and, if $p =\frac\alpha2(1+\frac s2)$, we bound $x_d\vee y_d\geq x_d$. In this way, we obtain
	\begin{align*}
          & \int_{\R^d_+} \one_{|x'-y'|< 2R} \one_{2x_d< R} w_R(x,y)^{-1} K_R(x,y)\,dx
            \lesssim y_d^{p+\gamma} R^{\frac{\alpha s}{2}-2p+\beta-\gamma-1} \\
          & \times \int_0^{R/2} dx_d\, x_d^{-\frac{\alpha s}{2}+p-\beta} \cdot \Biggl[ \one_{\alpha=2} \\
          & \qquad\qquad + \left(\one_{p\leq\frac{\alpha}{2}(1+\frac s2)} + \left( \ln \frac{R}{x_d}\right)\one_{p=\frac{\alpha}{2}(1+\frac s2)} + \left( \frac{R}{x_d\vee y_d}\right)^{2p-\alpha(1+\frac s2)} \one_{p>\frac{\alpha}{2}(1+\frac s2)} \right) \one_{\alpha<2} \Biggr] \\
          & \lesssim y_d^{p+\gamma} R^{-p-\gamma}  + y_d^{\alpha-\beta+\gamma+1} R^{-\alpha+\beta-\gamma-1} \one_{p>\frac\alpha2(1+\frac s2)}\one_{\alpha<2} \,.
	\end{align*}
	Here the assumption $\beta<1+p-\frac{\alpha s}{2}$ guarantees the convergence of the $x_d$-integral near zero. The additional term in case $\alpha<2$ and $p>\frac{\alpha}{2} (1+\frac s2)$ comes from the integral between $0$ and $y_d$.
	
	Summing with respect to $R$ we obtain
	$$
	\sum_{2y_d<R\in 2^\Z} \left( y_d^{p+\gamma} R^{-p-\gamma}  + y_d^{\alpha-\beta+\gamma+1} R^{-\alpha+\beta-\gamma-1} \one_{p>\frac\alpha2(1+\frac s2)}\one_{\alpha<2} \right) \sim 1 \,.
	$$
	Here the assumptions $\gamma>-p$ and $\beta-\gamma<1+\alpha$ guarantee the convergence of the $R$-sum. This concludes the Schur test and we deduce the $L^2(\R^d_+)$ boundedness of the piece of $K$ in this last region.
\end{proof}

To deduce Theorem \ref{genhardy} from Theorem \ref{genhardybdd} we need the following lemma.

\begin{lemma}
  \label{domain}
  Let $\alpha,s\in(0,2]$ and $\lambda\geq\lambda_*$. Then $C_c^\infty(\R_+^d) \subset \dom L_\lambda^{s/2}$.
\end{lemma}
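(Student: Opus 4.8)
The plan is to reduce the claim to a quantitative bound on $\|L_\lambda^{s/2}\phi\|_{L^2}$ for $\phi\in C_c^\infty(\R^d_+)$, and since $s\in(0,2]$ it suffices, by the spectral theorem and operator monotonicity of $t\mapsto t^{s/2}$ on $[0,\infty)$ combined with the Heinz--Kato inequality (or simply interpolation between $s=0$ and $s=2$), to treat the endpoint case $s=2$, i.e.\ to show $C_c^\infty(\R^d_+)\subset\dom L_\lambda$. Indeed, for $s\in(0,2)$ one has $L_\lambda^{s/2}\le 1+L_\lambda$ in the form sense on the common form domain, so $\|L_\lambda^{s/2}\phi\|^2 = \langle\phi,L_\lambda^s\phi\rangle \le \langle\phi,(1+L_\lambda)^2\phi\rangle < \infty$ once $\phi\in\dom L_\lambda$; alternatively, $\dom L_\lambda \subset \dom L_\lambda^{s/2}$ holds trivially from the spectral theorem. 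So everything comes down to $s=2$.

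For $s=2$ and $\alpha=2$: if $\phi\in C_c^\infty(\R^d_+)$ then $-\Delta\phi + \lambda x_d^{-\alpha}\phi\in L^2(\R^d_+)$ because $\phi$ is smooth with compact support in the \emph{open} half-space, so $x_d$ is bounded below on $\supp\phi$ and the potential is a bounded smooth multiplier there; it remains to check this $L^2$ function is the image of $\phi$ under the self-adjoint operator $L_\lambda$, which follows since $L_\lambda$ is the Friedrichs extension of the form on $C^1_c(\R^d_+)$ and for $\phi\in C_c^\infty$ we have $\langle L_\lambda\phi_{\rm cl},\psi\rangle = Q_\lambda[\phi,\psi]$ for all $\psi\in C^1_c$, where $\phi_{\rm cl}:=-\Delta\phi+\lambda x_d^{-\alpha}\phi$, by integration by parts; density of $C^1_c$ in the form domain then identifies $\phi\in\dom L_\lambda$ with $L_\lambda\phi = \phi_{\rm cl}$. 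For $\alpha<2$: here $(-\Delta)^{\alpha/2}_{\R^d_+}\phi$ for $\phi\in C_c^\infty(\R^d_+)$ is again in $L^2(\R^d_+)$ — on $\supp\phi$ it is given by a convergent singular integral producing a smooth, compactly-on-$\supp\phi$-concentrated function with the standard $|x|^{-d-\alpha}$ tail, which is square-integrable since $2(d+\alpha)>d$; and $\lambda x_d^{-\alpha}\phi$ is bounded with compact support. The same integration-by-parts / form-core argument identifies $\phi$ as lying in $\dom L_\lambda$ with $L_\lambda\phi = (-\Delta)^{\alpha/2}_{\R^d_+}\phi + \lambda x_d^{-\alpha}\phi$.

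The one point requiring care — and the main obstacle — is justifying the integration by parts $Q_\lambda[\phi,\psi] = \langle L_\lambda^{\rm diff}\phi,\psi\rangle$ for all $\psi\in C^1_c(\R^d_+)$ in the fractional case, i.e.\ the symmetry identity $\frac12\mathcal A(d,-\alpha)\iint \frac{(\phi(x)-\phi(y))(\psi(x)-\psi(y))}{|x-y|^{d+\alpha}} = \langle (-\Delta)^{\alpha/2}_{\R^d_+}\phi,\psi\rangle$; this is the standard fact that the regional fractional Laplacian generates the Dirichlet form, and it holds by Fubini once one checks absolute integrability, which is fine since both $\phi,\psi$ are Lipschitz with compact support in the open half-space (so the integrand near the diagonal is $O(|x-y|^{2-d-\alpha})$, integrable, and away from the diagonal it decays like $|x-y|^{-d-\alpha}$). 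Once this identity and the density of $C^1_c(\R^d_+)$ in the form domain of $L_\lambda$ are in hand, the representation theorem for the Friedrichs form yields $\phi\in\dom L_\lambda$, completing the proof. I would also remark that the explicit action $L_\lambda\phi$ computed here is exactly what feeds into the Schur-test arguments of the later sections.
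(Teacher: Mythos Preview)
Your proof is correct and follows essentially the same route as the paper: reduce to $s=2$ via the nesting $\dom L_\lambda\subset\dom L_\lambda^{s/2}$, then use the Friedrichs characterization to exhibit $g\in L^2(\R^d_+)$ with $Q_\lambda[\phi,\psi]=\langle g,\psi\rangle$ for all $\psi\in C^1_c(\R^d_+)$. The only difference is in how $g\in L^2$ is verified in the fractional case: you argue directly that the regional operator $(-\Delta)^{\alpha/2}_{\R^d_+}\phi$ has an $|x|^{-d-\alpha}$ tail, whereas the paper passes through the identity in Remark~\ref{otherdef} to write $g=((-\Delta)^{\alpha/2}f)|_{\R^d_+}+(\lambda-\lambda_0)x_d^{-\alpha}f$ with the \emph{full-space} fractional Laplacian, for which $L^2$-membership is immediate from $|\xi|^\alpha\widehat f\in L^2(\R^d)$.
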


\begin{proof}
	Since the domains are nested as $s$ decreases, it suffices to consider the case $s=2$. The case $\alpha=2$ is classical, so we may assume $\alpha<2$. Let $f\in C^\infty_c(\R^d_+)$. By definition of the Friedrichs extension, we need to find a $g\in L^2(\R^d_+)$ such that
	\begin{align*}
		& \tfrac12\, \mathcal A(d,-\alpha) \iint_{\R_+^d\times\R_+^d} \frac{(\overline{u(x)}-\overline{u(y)})(f(x)-f(y))}{|x-y|^{d+\alpha}}\,dx\,dy + \lambda \int_{\R^d_+} \frac{\overline{u(x)}f(x)}{x_d^\alpha}\,dx \\
		& = \int_{\R^d_+} \overline{u(x)} g(x)\,dx
	\end{align*}
	for all $u\in C^1_c(\R^d_+)$. By polarizing the computation in Remark \ref{otherdef}, identifying both $f$ and $u$ with their extension by zero to $\R^d$, we see that this is equivalent to
	$$
	\int_{\R^d} \overline{(-\Delta)^{\alpha/4} u(x)} (-\Delta)^{\alpha/4} f(x)\,dx + (\lambda-\lambda_0) \int_{\R^d_+} \frac{\overline{u(x)}f(x)}{x_d^\alpha}\,dx \\
	= \int_{\R^d_+} \overline{u(x)} g(x)\,dx \,.
	$$
	This holds with $g :=((-\Delta)^{\alpha/2}f)|_{\R^d_+} + (\lambda-\lambda_0) x_d^{-\alpha}f$. Indeed, the first term belongs to $L^2(\R^d_+)$ since $|\xi|^\alpha \widehat f \in L^2(\R^d)$ and the second one since $x_d^{-\alpha}$ is bounded on the support of $f$. This completes the proof.
\end{proof}

\begin{proof}[Proof of Theorem \ref{genhardy}]
  For given $f\in C^\infty_c(\R^d_+)$, $g:=L_\lambda^{s/2} f\in L^2(\R^d_+)$ by Lemma \ref{domain}, so Theorem \ref{genhardy} follows from Theorem \ref{genhardybdd}.
\end{proof}

\begin{remark}
	The same proof, without invoking Lemma \ref{domain}, shows that the generalized Hardy inequality \eqref{eq:genhardy} holds for all $f\in\dom L_\lambda^{s/2}$ under the assumptions of Theorem \ref{genhardy}.
\end{remark}


\section{Difference of heat kernels}
\label{s:differenceheatkernels}

A key tool for the proof of the reversed Hardy inequality (Theorem~\ref{reversehardy}) are bounds for the difference between the heat kernels of $L_0$ and $L_\lambda$, i.e.,
\begin{align*}
  K_t^\alpha(x,y) := \me{-tL_0}(x,y) - \me{-tL_\lambda}(x,y) \,.
\end{align*}
Given $\alpha\in(0,2]$ and $\lambda\geq\lambda_*$, let $p$ be defined by \eqref{eq:defp} and set
$$
q:= \min\{ p, (\alpha-1)_+\} \,.
$$
We formulate our bounds in terms of the functions
\begin{align*}
  \begin{split}
    J_t^{\alpha} (x,y) & := \left( \one_{x_d \vee y_d\leq t^{1/\alpha}} + \one_{x_d \vee y_d\geq t^{1/\alpha}} \one_{|x-y|\geq (x_d\wedge y_d)/2} \right) 
    \left(1\wedge \frac{x_d}{t^{1/\alpha}}\right)^q \,\left(1\wedge \frac{y_d}{t^{1/\alpha}}\right)^q \\
    & \qquad \times t^{-\frac{d}{\alpha}} \left[\left( 1 \wedge \frac{t^{1+\frac{d}{\alpha}}}{|x-y|^{d+\alpha}} \right)\one_{\alpha<2} + \exp\left(-c\frac{|x-y|^2}{t}\right)\one_{\alpha=2}\right]
  \end{split}
\end{align*}
and, with some appropriate constant $c>0$,
\begin{align*}
  \begin{split}
    M_t^\alpha(x,y)
    & := \one_{x_d \vee y_d \geq t^{1/\alpha}} \one_{|x-y|\leq(x_d\wedge y_d)/2} \\
    & \quad \times \frac{t^{1-\frac{d}{\alpha}}}{(x_d\vee y_d)^\alpha} \left[\left( 1 \wedge \frac{t^{1+\frac{d}{\alpha}}}{|x-y|^{d+\alpha}} \right)\one_{\alpha<2} + \exp\left(-c\frac{|x-y|^2}{t}\right)\one_{\alpha=2}\right].
  \end{split}
\end{align*}

\begin{theorem}
  \label{differenceheatkernel3}
  Let
  $\alpha\in(0,2]$ and let
  $\lambda\geq0$ when $\alpha\in(0,2)$ and $\lambda\geq-1/4$ when $\alpha=2$.
  Then, for all $x,y\in\R_+^d$ and $t>0$, one has
  \begin{align}
    \label{eq:differenceheatkernel3}
    |K_t^\alpha(x,y)| \lesssim J_t^\alpha(x,y) + M_t^\alpha(x,y) \,.
  \end{align}
\end{theorem}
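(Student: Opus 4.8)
The plan is to prove the bound \eqref{eq:differenceheatkernel3} by writing the difference $K_t^\alpha$ via the Duhamel (perturbation) formula and then inserting the two-sided heat kernel bounds \eqref{eq:heatkernel} and \eqref{eq:localheatkernelhardy}. Concretely, one has
\begin{align*}
	K_t^\alpha(x,y) = \lambda \int_0^t ds \int_{\R^d_+} \me{-(t-s)L_0}(x,z)\, z_d^{-\alpha}\, \me{-sL_\lambda}(z,y)\,dz \,,
\end{align*}
which follows from the identity $\frac{d}{ds}(\me{-(t-s)L_0}\me{-sL_\lambda}) = \me{-(t-s)L_0}(L_0-L_\lambda)\me{-sL_\lambda}$, integrated from $0$ to $t$, together with $L_\lambda = L_0 + \lambda x_d^{-\alpha}$ (one has to be a little careful here with domains and the singularity of $x_d^{-\alpha}$, but since the heat semigroups are smoothing and the integral is absolutely convergent for $x,y\in\R^d_+$ and $t>0$ this is justified; alternatively one iterates a truncated potential $\lambda x_d^{-\alpha}\wedge N$ and passes to the limit). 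The key point is that while each of the two heat kernels individually has a prefactor $(1\wedge z_d/(t-s)^{1/\alpha})^p \cdot (\ldots)$, in the \emph{difference} we have replaced one power of $L_0$'s kernel by $L_\lambda$'s, so the $z$-integral sees the weight $z_d^{-\alpha}$ against the product of two such prefactors; after integrating out $z$ (and $s$) the net effect degrades the exponent $p$ down to $q=\min\{p,(\alpha-1)_+\}$ in the region where $x,y$ are comparable to each other and far from the boundary relative to $t^{1/\alpha}$ (this is exactly where the cancellation matters and where $M_t^\alpha$ captures the residual term $t(x_d\vee y_d)^{-\alpha}$ instead of $(x_d y_d/t^{2/\alpha})^p$), and otherwise just reproduces $J_t^\alpha$ with exponent $q$ rather than $p$.

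The technical heart of the argument is the estimate of the $z$-integral
\begin{align*}
	I(x,y,s,t) := \int_{\R^d_+} \left(1\wedge\tfrac{x_d}{(t-s)^{1/\alpha}}\right)^p \left(1\wedge\tfrac{z_d}{(t-s)^{1/\alpha}}\right)^p (t-s)^{-d/\alpha} \left(1\wedge\tfrac{(t-s)^{1/\alpha}}{|x-z|}\right)^{d+\alpha} z_d^{-\alpha} \left(1\wedge\tfrac{z_d}{s^{1/\alpha}}\right)^p \left(1\wedge\tfrac{y_d}{s^{1/\alpha}}\right)^p s^{-d/\alpha} \left(1\wedge\tfrac{s^{1/\alpha}}{|z-y|}\right)^{d+\alpha} dz
\end{align*}
(with the obvious Gaussian modification when $\alpha=2$), followed by the $s$-integral over $(0,t)$. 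The strategy, as the authors signal in the introduction, is to exploit translation invariance parallel to the boundary: integrate out $z'$ first (the $z'$-integral is a convolution of two whole-space heat kernels, giving a heat kernel at time $\sim\max\{t-s,s,\text{geometric scale}\}$), reducing everything to a one-dimensional integral in $z_d$ against the weight $z_d^{-\alpha}(1\wedge z_d/\cdot)^{2p}$. The crucial one-dimensional fact is that $\int_0^\infty z_d^{-\alpha}(1\wedge z_d/r)^{2p}\,dz_d \sim r^{1-\alpha}$ precisely when $2p > \alpha-1$ (i.e. $p>(\alpha-1)/2$, which holds because $p\geq(\alpha-1)/2$ with equality only at $\lambda=\lambda_*<0$, excluded here); this integrability near $z_d=0$ is what forces the degradation to $q$: one can pull out at most a factor $z_d^q$ from one of the prefactors before the $z_d^{-\alpha}$ weight makes the remaining integral converge, and the leftover is $r^{-\alpha+1}$ with $r$ the relevant length scale. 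Balancing the three scales $(t-s)^{1/\alpha}$, $s^{1/\alpha}$, $|x-y|$, $x_d$, $y_d$ in the various regions then produces $J_t^\alpha$ in the "spread out" region and $M_t^\alpha$ in the region $\{x_d\vee y_d\geq t^{1/\alpha},\ |x-y|\leq (x_d\wedge y_d)/2\}$.

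The main obstacle I anticipate is the careful case analysis needed to get the off-diagonal factor right when $\alpha<2$: unlike the Gaussian case, the polynomial decay $(1\wedge t^{1/\alpha}/|x-y|)^{d+\alpha}$ does not localize the $z$-convolution, so one must split the $z$-integration according to whether $z$ is near $x$, near $y$, or in the "far" region, and in each subregion track both the boundary prefactors and the off-diagonal factors. In particular, verifying that in the region $|x-y|\leq(x_d\wedge y_d)/2$ with $x_d\vee y_d\geq t^{1/\alpha}$ one really gets the bound $M_t^\alpha$ (with its $t^{1-d/\alpha}(x_d\vee y_d)^{-\alpha}$ prefactor) rather than something with the product structure requires pinning down that the dominant contribution to the $s$-integral comes from $s\sim t$ (or $t-s\sim t$) where one of the semigroups is essentially the free one and the weight $z_d^{-\alpha}$ is evaluated at $z_d\sim x_d\vee y_d$. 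I would organize the proof by first doing the $z'$-integration in general, then splitting into the two regions defining $J_t^\alpha$ versus $M_t^\alpha$, and within each performing the $z_d$- and $s$-integrations with the one-dimensional integrability lemma above as the workhorse; the $\alpha=2$ case is strictly easier because the Gaussian factor localizes everything and can be treated in parallel with only notational changes.
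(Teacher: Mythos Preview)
Your overall strategy---Duhamel plus heat kernel bounds---is right for the $M_t^\alpha$ region, but there are two substantive problems and one major simplification you are missing.

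\textbf{The $J_t^\alpha$ region needs no Duhamel at all.} On the support of $J_t^\alpha$ (i.e.\ $x_d\vee y_d\leq t^{1/\alpha}$, or $x_d\vee y_d\geq t^{1/\alpha}$ with $|x-y|\geq(x_d\wedge y_d)/2$) the paper simply uses the triangle inequality $|K_t^\alpha|\leq \me{-tL_0}(x,y)+\me{-tL_\lambda}(x,y)$ and the upper bounds in Theorems \ref{heatkernel} and \ref{localheatkernelhardy}. Since $\me{-tL_0}$ has boundary exponent $(\alpha-1)_+$ and $\me{-tL_\lambda}$ has exponent $p$, the sum is bounded by the same expression with exponent $q=\min\{p,(\alpha-1)_+\}$. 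That is the entire origin of $q$; it is \emph{not} an integrability constraint coming from the $z_d$-integral as you suggest. Your displayed integral $I(x,y,s,t)$ also has the exponents wrong: the $\me{-(t-s)L_0}$ factor carries $(\alpha-1)_+$, not $p$, on both its $x_d$ and $z_d$ prefactors.

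\textbf{Your workhorse lemma is false as stated.} The integral $\int_0^\infty z_d^{-\alpha}(1\wedge z_d/r)^{2p}\,dz_d$ diverges at infinity whenever $\alpha\leq 1$ (and in fact for all $\alpha\leq 1$ regardless of $p$). In the actual proof there are always additional cutoffs from the off-diagonal factors that localize the $z_d$-integration; without tracking those your reduction to a clean one-dimensional integral does not go through.

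\textbf{What the paper does in the $M_t^\alpha$ region.} After scaling to $t=1$ and assuming $x_d\leq y_d$, $y_d\geq 1$, $|x-y|\leq x_d/2$ (so $x_d\sim y_d\geq 1$), the paper applies Duhamel and splits the $z$-integral at $z_d=x_d/2$. For $z_d>x_d/2$ the key trick is \emph{not} to integrate out $z'$: one bounds $z_d^{-\alpha}\lesssim x_d^{-\alpha}$, lowers both boundary exponents to $q$, reinserts the (trivial) $x_d$ and $y_d$ prefactors, and then recognizes the $z$-integral as the semigroup identity $\int \me{-(1-s)L_{-\lambda_-}}(x,z)\me{-sL_{-\lambda_-}}(z,y)\,dz=\me{-L_{-\lambda_-}}(x,y)$, which immediately gives $M^\alpha(x,y)$. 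Only for $z_d<x_d/2$ does one integrate out $z'$ (via Lemma \ref{integral}), and there the remaining $z_d$-integral is over $(0,x_d/2)$ with integrand $z_d^{-\alpha}(1\wedge z_d)^{(\alpha-1)_++p}$, which converges because $(\alpha-1)_++p-\alpha>-1$. Your plan to do the $z'$-integration uniformly would miss the semigroup shortcut and make the $z_d>x_d/2$ piece unnecessarily hard.
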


\begin{remark}
  \label{differenceheatkernel3rem}
  Let $\alpha\in(0,2)$, $\lambda\in[\lambda_*,0)$ and assume that $\me{-tL_\lambda}(x,y)$ satisfies the upper bound in \eqref{eq:heatkernel} with $p$ defined by \eqref{eq:defp}. Then \eqref{eq:differenceheatkernel3} remains valid. This follows by the same arguments as in the proof below.
\end{remark}

\begin{proof}
  We assume $\lambda\neq0$ without loss generality as the claim is trivial when $\lambda=0$. By scaling, it suffices to consider $t=1$ and, by symmetry, it suffices to consider $x_d \leq y_d$. We now drop the subscript $t$ in $K_t^\alpha$, $J_t^\alpha$, and $M_t^\alpha$.


  By the triangle inequality and the bounds \eqref{eq:heatkernel} and \eqref{eq:localheatkernelhardy}, we obtain
  \begin{align*}
    \begin{split}
      |K^\alpha(x,y)|
      & \lesssim 
      \left[ \left(1\wedge x_d \right)^p\,\left(1\wedge y_d \right)^p + \left(1\wedge x_d \right)^{(\alpha-1)_+} \,\left(1\wedge y_d \right)^{(\alpha-1)_+} \right]\\
      & \quad \times \left[\left(1\wedge |x-y|^{-d-\alpha}\right)\one_{\alpha<2} + \me{-c|x-y|^2}\one_{\alpha=2}\right].
    \end{split}
  \end{align*}
	For an upper bound we can replace both exponents $p$ and $(\alpha-1)_+$ by $q$ and arrive at the claimed bound in the regions where $y_d\leq 1$ and where $y_d\geq 1$ and $|x-y|\geq x_d/2$.
	
	\medskip
	
	In the following we concentrate on the region where $y_d\geq 1$ and $|x-y|\leq x_d/2$. Note that in this region we have $y_d\leq x_d + |x-y|\leq (3/2)x_d$, so $x_d\sim y_d\geq 1$.
	
	By Duhamel's formula, i.e.,
  \begin{align*}
    \me{-L_0} - \me{-L_\lambda}
    = \lambda \int_0^1 ds\, \me{-(1-s)L_0} x_d^{-\alpha}\me{-sL_{\lambda}}\,,
  \end{align*}
  and the bounds \eqref{eq:heatkernel} and \eqref{eq:localheatkernelhardy}, we conclude
  \begin{align*}
    \begin{split}
      |K^\alpha(x,y)|
      & \lesssim \int_0^1 ds \int_{\R_+^d} \frac{dz}{z_d^\alpha} s^{-\frac{d}{\alpha}}(1-s)^{-\frac{d}{\alpha}} \left(1\wedge\frac{z_d}{(1-s)^{1/\alpha}}\right)^{(\alpha-1)_+} \left(1\wedge\frac{z_d}{s^{1/\alpha}}\right)^{p} \\
      & \quad \times 
      \left[\left(1\wedge\frac{(1-s)^{1+\frac{d}{\alpha}}}{|x-z|^{d+\alpha}}\right)
        \left(1\wedge\frac{s^{1+\frac{d}{\alpha}}}{|y-z|^{d+\alpha}}\right)\one_{\alpha<2}\right.\\
      & \qquad\qquad\qquad\qquad\qquad\qquad\quad \left. + \exp\left(-c\left(\frac{|x-z|^2}{1-s}+\frac{|y-z|^2}{s}\right)\right)\one_{\alpha=2}\right]\,.
    \end{split}
  \end{align*}
	Note that here we dropped the factors
	\begin{equation}
		\label{eq:trivialfactors}
		\left(1\wedge\frac{x_d}{(1-s)^{1/\alpha}}\right)^{(\alpha-1)_+} \left(1\wedge\frac{y_d}{s^{1/\alpha}}\right)^{p} \sim 1 \,,
	\end{equation}
	since $x_d \sim y_d \geq 1$ and $s\in[0,1]$.
	
	We divide the $z$ integration at $z_d=x_d/2$, leading to the bound
	$$
	|K^\alpha(x,y)|\lesssim k^\alpha_<(x,y)+k^\alpha_>(x,y)
	$$
	with
	\begin{align*}
		\begin{split}
			k^\alpha_>(x,y) :=
			& \int_0^1 ds \int_{z_d> x_d/2} \frac{dz}{z_d^\alpha} \, s^{-\frac{d}{\alpha}}(1-s)^{-\frac{d}{\alpha}} \left(1\wedge\frac{z_d}{(1-s)^{1/\alpha}}\right)^{(\alpha-1)_+} \left(1\wedge\frac{z_d}{s^{1/\alpha}}\right)^{p} \\
			& \quad \times 
			\left[\left(1\wedge\frac{(1-s)^{1+\frac{d}{\alpha}}}{|x-z|^{d+\alpha}}\right)
			\left(1\wedge\frac{s^{1+\frac{d}{\alpha}}}{|y-z|^{d+\alpha}}\right)\one_{\alpha<2}\right.\\
			& \qquad\qquad\qquad\qquad\qquad\qquad\quad \left. + \exp\left(-c\left(\frac{|x-z|^2}{1-s}+\frac{|y-z|^2}{s}\right)\right)\one_{\alpha=2}\right]\,.
		\end{split}
	\end{align*}
	and similarly for $k^\alpha_<$.
	
	\medskip
	
	We discuss $k^\alpha_<$ and $k^\alpha_>$ separately and begin with the latter. We bound $z_d^{-\alpha} \lesssim x_d^{-\alpha}\lesssim y_d^{-\alpha}$ and we bound
	$$
	\left(1\wedge\frac{z_d}{(1-s)^{\frac1\alpha}}\right)^{(\alpha-1)_+} \left(1\wedge\frac{z_d}{s^{1/\alpha}}\right)^{p}
	\leq \left(1\wedge\frac{z_d}{(1-s)^{\frac1\alpha}}\right)^{q} \left(1\wedge\frac{z_d}{s^{1/\alpha}}\right)^{q} \,.
	$$
	Now we enlarge the $z_d$-integration to all of $(0,\infty)$ and reinsert the trivial factors \eqref{eq:trivialfactors}, but with both exponents replaced by $q$. Noting that $q$ is the exponent corresponding to the operator $L_{-\lambda_-}$ (where $\lambda_-=(-\lambda)\vee 0$), we conclude that
	\begin{align*}
		k^\alpha_>(x,y) & \lesssim
		\frac{1}{y_d^\alpha} \int_0^1 ds \int_{\R_+^d}dz\, \me{-(1-s)L_{-\lambda_-}}(x,z) \me{-sL_{-\lambda_-}}(z,y)
		= \frac{1}{y_d^\alpha} \int_0^1 ds\, \me{-L_{-\lambda_-}}(x,y) \\
		& \sim M^\alpha(x,y)\,,
	\end{align*}
	where we used the semigroup property of $\exp(-sL_{-\lambda_-})$ and the heat kernel bounds and we dropped again trivial factors as in \eqref{eq:trivialfactors} (with exponents $q$).
	
	\medskip
	
	It remains to deal with $k^\alpha_<$, where we integrate over $z_d<x_d/2$. We first discuss the case $\alpha<2$.
        We begin by carrying out the $z'$-integration. Computations are simplified if we use the fact that $|x_d-z_d| \sim x_d$ by the choice of the cut-off in the integral and similarly $|y_d-z_d|\sim y_d \sim x_d$ (since $|z_d|\leq x_d/2\leq y_d/2$). Thus, $|x-z|\sim |x'-z'|+x_d$ and $|y-z|\sim |y'-z'|+x_d$ and the integral to be computed is comparable to
	\begin{equation}
		\label{eq:intalphasmaller2}
		\int_{\R^{d-1}}dz' \left(1\wedge\frac{(1-s)^{1+\frac{d}{\alpha}}}{x_d^{d+\alpha} + |x'-z'|^{d+\alpha}}\right)
		\left(1\wedge\frac{s^{1+\frac{d}{\alpha}}}{x_d^{d+\alpha} + |y'-z'|^{d+\alpha}}\right).
	\end{equation}
	We simplify the integrand, using $s\in[0,1]$ and $x_d\sim y_d\geq 1$,
	\begin{equation*}
		1\wedge\frac{(1-s)^{1+\frac{d}{\alpha}}}{x_d^{d+\alpha} + |x'-z'|^{d+\alpha}}
		\sim \frac{(1-s)^{1+\frac{d}{\alpha}}}{x_d^{d+\alpha} + |x'-z'|^{d+\alpha}}
	\end{equation*}
	and
	$$
	1\wedge\frac{s^{1+\frac{d}{\alpha}}}{x_d^{d+\alpha} + |y'-z'|^{d+\alpha}}
	\sim \frac{s^{1+\frac{d}{\alpha}}}{x_d^{d+\alpha} + |y'-z'|^{d+\alpha}} \,.
	$$
	Using Lemma~\ref{integral} we see that the integral \eqref{eq:intalphasmaller2} is comparable to
	$$
	s^{1+\frac d\alpha}(1-s)^{1+\frac d\alpha} \ \frac{x_d^{-\alpha-1}}{x_d^{d+\alpha}+|x'-y'|^{d+\alpha}} \,.
	$$
	For an upper bound, one can remove the term $|x'-y'|$ in the denominator. Thus, we have shown that
	\begin{align*}
          k^\alpha_<(x,y) \lesssim 
          x_d^{-d-2\alpha-1}
          \int_0^1 ds \int_0^{x_d/2} \frac{dz_d}{z_d^\alpha}\, s (1-s) \left(1\wedge\frac{z_d}{(1-s)^{1/\alpha}}\right)^{(\alpha-1)_+} \left(1\wedge\frac{z_d}{s^{1/\alpha}}\right)^{p} \,.
	\end{align*}
		
	Next, we carry out the $s$-integration for fixed $z_d\in[0,x_d/2]$. The integral coming from $s\leq 1/2$ is
	$$
	\sim \int_0^{1/2}ds\, s \left(1\wedge z_d \right)^{(\alpha-1)_+} \left(1\wedge\frac{z_d}{s^{1/\alpha}}\right)^{p} \sim (1 \wedge z_d)^{(\alpha-1)_++p} \,.
	$$
	Here we used $p<\alpha$. Similarly, the integral coming from $s\geq 1/2$ is
	$$
	\sim \int_{1/2}^1 ds \, (1-s) \left(1\wedge\frac{z_d}{(1-s)^{1/\alpha}}\right)^{(\alpha-1)_+} \left(1\wedge z_d \right)^{p} \sim (1 \wedge z_d)^{(\alpha-1)_++p} \,.
	$$
	This leads to the bound
	\begin{align*}
          \begin{split}
            k^\alpha_<(x,y) & \lesssim 
            x_d^{-d-2\alpha-1}
            \int_0^{x_d/2} \frac{dz_d}{z_d^\alpha}\, (1 \wedge z_d)^{(\alpha-1)_++p} \\
            & \sim x_d^{-d-2\alpha-1} \left( \one_{\alpha\geq 1} + (\ln(1+x_d)) \one_{\alpha=1} + x_d^{1-\alpha}\one_{\alpha<1} \right).
          \end{split}
    \end{align*}
	In the last computation, we used the fact that, if $\alpha\geq1$, then $p>0$ (note that for $\alpha=1$, this inequality is ensured by the assumption $\lambda>\lambda_*=0$), and if $\alpha<1$, then $p-\alpha\geq \frac{\alpha-1}{2}-\alpha >-1$.

	Finally, we note that, since $x_d\gtrsim1$,
	$$
	x_d^{-d-2\alpha-1} \left( \one_{\alpha\leq 1} + (\ln(1+x_d)) \one_{\alpha=1} + x_d^{1-\alpha}\one_{\alpha<1} \right)
	\lesssim x_d^{-\alpha} \left( 1 \wedge x_d^{-d-\alpha} \right).
	$$
	Since $x_d\sim y_d$ and $x_d/2\geq |x-y|$, we deduce that $k^\alpha_<(x,y) \lesssim M^\alpha(x,y)$ if $\alpha<2$.
	
	\medskip
	
	It remains to treat the case $\alpha=2$. The argument is similar, but slightly simpler. The $z'$-integral can be done explicitly, yielding
	\begin{align*}
		& \int_{\R^{d-1}}dz'\, \exp\left(-c\left(\frac{|x-z|^2}{1-s}+\frac{|y-z|^2}{s}\right)\right) \\
		& = \const s^{\frac{d-1}{2}} (1-s)^{\frac{d-1}{2}} \exp\left(-c\left(|x'-y'|^2 + \frac{(x_d-z_d)^2}{1-s}+\frac{(y_d-z_d)^2}{s}\right)\right).
	\end{align*}
	For $s\in[0,1]$ and all $x_d,y_d,z_d\geq0$ we bound
	$$
	\frac{(x_d-z_d)^2}{1-s}+\frac{(y_d-z_d)^2}{s} \geq (x_d-z_d)^2 + (y_d-z_d)^2 \geq \frac12 (x_d-y_d)^2 \,.
	$$
	Also, as before, using the restriction $z_d<x_d/2$ and $y_d\geq x_d$,
	$$
	\frac{(x_d-z_d)^2}{1-s}+\frac{(y_d-z_d)^2}{s} \geq (x_d-z_d)^2 + (y_d-z_d)^2 \gtrsim x_d^2 \,.
	$$
	Combining these two bounds gives
	\begin{align*}
		& \exp\left(-c\left(|x'-y'|^2 + \frac{(x_d-z_d)^2}{1-s}+\frac{(y_d-z_d)^2}{s}\right)\right)
		\leq \exp(-\tilde c x_d^2) \exp\left( -\frac{c}4|x-y|^2 \right) \\
		& \lesssim x_d^{-2} \exp\left( -\frac{c}4|x-y|^2 \right) = M^2(x,y) \,,
	\end{align*}
	where $M^2(x,y)$ is now defined with $c$ being one quarter of the constant in the heat kernel bound. (Obviously, the bound on $k_>^2$ remains valid if $c$ is decreased.)
	
	Thus, to prove that $k_<^2(x,y)\lesssim M^2(x,y)$ it suffices to prove that
	\begin{align*}
		\int_0^1 ds \int_0^{x_d/2} \frac{dz_d}{z_d^2}\, s^{-\frac12} (1-s)^{-\frac12} \left(1\wedge\frac{z_d}{(1-s)^{1/2}}\right) \left(1\wedge\frac{z_d}{s^{1/2}}\right)^{p}
		\lesssim 1 \,.
	\end{align*}
	To prove this, we first perform the $s$-integral for fixed $z_d\in[0,x_d/2]$ and find
	\begin{align*}
		& \int_0^1 ds \, s^{-\frac12} (1-s)^{-\frac12} \left(1\wedge\frac{z_d}{(1-s)^{1/2}}\right) \left(1\wedge\frac{z_d}{s^{1/2}}\right)^{p} \\
		& \sim (1\wedge z_d)^{p+1} \left( 1 + \ln(1+\tfrac1{z_d}) \right) \one_{p\leq 1} + (1\wedge z_d)^2 \one_{p>1} \,.
	\end{align*}
	We omit the detail of this computation. Since the right side, multiplied by $z_d^{-2}$, is integrable over $(0,\infty)$ (for $p\leq 1$, we use $p>1/2>0$), we obtain the claimed bound.		
\end{proof}

\section{Proof of the reversed Hardy inequality (Theorem \ref{reversehardy})}
\label{s:schur}

\begin{proof}[Proof of Theorem \ref{reversehardy}]
  \emph{Step 1.}
  The assertion for $s=2$ follows from $L_\lambda-L_0=\lambda x_d^{-\alpha}$.
  In the following we assume $0<s<2$ and $\lambda\neq0$. By the spectral theorem, we have, for $f\in C^\infty_c(\R^d_+)$,
  \begin{align*}
    \left(L_{\lambda}^{s/2} - L_0^{s/2} \right) f 
    & = - \frac{1}{\Gamma(-s/2)} \int_0^\infty \frac{dt}{t}\ t^{-s/2} \left( e^{-tL_0} - e^{-tL_{\lambda}}\right) f \\
    & = - \frac{1}{\Gamma(-s/2)} \int_0^\infty \frac{dt}{t}\ t^{-s/2} \int_{\R_+^d} dy\, K_t^\alpha(\cdot,y) f(y) \,.
  \end{align*}
  (Here we use Lemma \ref{domain}, which guarantees that $C_c^\infty(\R^d_+)\subset \dom L_\lambda^{s/2}\cap \dom L_0^{s/2}$.) Abbreviating $g(y) := y_d^{-\alpha s/2} |f(y)|$, it suffices to show that the right side of
  \begin{align*}
    \begin{split}
      \left\| \left(L_{\lambda}^{s/2} - L_0^{s/2} \right) f \right\|_{L^2(\R_+^d)}
      \lesssim \left\|\int_{\R_+^d}dy\, \int_0^\infty \frac{dt}{t}\ t^{-\frac{s}{2}} K_t^\alpha(\cdot,y) y_d^{\alpha\frac{s}{2}} g(y)\right\|_{L^2(\R_+^d)}
    \end{split}
  \end{align*}
  is bounded by a multiple of $\|g\|_{L^2(\R_+^d)}$. By the pointwise bound of Theorem \ref{differenceheatkernel3} it suffices to show the $L^2(\R_+^d)$-boundedness of the operator associated to the kernel
  \begin{align}
    \label{eq:reversehardyaux1}
    \int_0^\infty dt\, t^{-1-s/2} \left(J_t^\alpha(x,y) + M_t^\alpha(x,y) \right)y_d^{\alpha s/2}\,, \quad x,y\in\R_+^d \,,
  \end{align}
  with $M_t^\alpha$ and $J_t^{\alpha}$ defined in the previous section. This $L^2(\R_+^d)$-boundedness will be shown in the following two steps, which therefore will conclude the proof of Theorem~\ref{reversehardy}.
  
  \medskip
  
  \emph{Step 2.} We begin with the kernel coming from the $M^\alpha_t$-part of \eqref{eq:reversehardyaux1}. As discussed in the proof of Theorem \ref{differenceheatkernel3}, on the support of $M_t^\alpha(x,y)$ we have $x_d\sim y_d$. 
  Hence,
  \begin{align*}
  	\int_0^\infty \frac{dt}{t}\, t^{-\frac{s}{2}} \, M_t^\alpha(x,y) y_d^{\frac{\alpha s}{2}}
  	\sim \int_0^\infty \frac{dt}{t}\, t^{-\frac{s}{2}} \, M_t^\alpha(x,y) (x_d y_d)^{\frac{\alpha s}{4}}.
  \end{align*}
  This replaces the kernel by a symmetric one and we only have to perform a single Schur test instead of two. We obtain
  \begin{align*}
  	& \sup_{x\in\R_+^d} \int_{\R_+^d} dy \int_0^\infty \frac{dt}{t}\, t^{-\frac{s}{2}}\, M_t^\alpha(x,y)  (x_d y_d)^{\frac{\alpha s}{4}} \\
  	& \lesssim \sup_{x\in\R_+^d} \int\limits_{y_d \sim x_d} dy \int\limits_{t\leq(x_d \vee y_d)^\alpha} \frac{dt}{t}\, t^{-\frac{s}{2}}\, (x_d y_d)^{\frac{\alpha s}{4}} \\
  	& \qquad\qquad \times \frac{t^{1-\frac d\alpha}}{(x_d \vee y_d)^\alpha} \left[\left( 1 \wedge \frac{t^{1+\frac d\alpha}}{|x-y|^{d+\alpha}}\right)\one_{\alpha<2} + \exp\left(-c\frac{|x-y|^2}{t}\right) \one_{\alpha=2}\right] \\
  	& \lesssim \sup_{x\in\R_+^d} x_d^{\frac{\alpha s}{2}-\alpha} \int\limits_{y_d \sim x_d} dy \int\limits_{t\lesssim x_d^\alpha} \frac{dt}{t}\, t^{-\frac{s}{2} + 1 - \frac{d}{\alpha}} \, \left( 1 \wedge \frac{t^{1+\frac d\alpha}}{|x-y|^{d+\alpha}} \right)\,.
  \end{align*}
  We now interchange the order of integration and do the $y$-integral first. We bound
  \begin{align*}
  	\int\limits_{y_d\sim x_d} dy\, \left(1\wedge\frac{t^{1+\frac{d}{\alpha}}}{|x-y|^{d+\alpha}}\right)
  	\leq \int_{\R^d} dy \left(1\wedge\frac{t^{1+\frac{d}{\alpha}}}{|x-y|^{d+\alpha}}\right)
  	\sim t^{\frac d\alpha} \,.
  \end{align*}
  Therefore, the supremum over $x\in\R_+^d$ above is $\lesssim \sup_{x\in\R_+^d} x_d^{\frac{\alpha s}{2}-\alpha} \int_0^{Cx_d^\alpha} dt\, t^{-\frac{s}{2}} <\infty$. Thus, the Schur test implies the $L^2(\R^d_+)$-boundedness of the corresponding operator.

	\medskip
	\emph{Step 3.} We now study the kernel coming from the $J^\alpha_t$-part of \eqref{eq:reversehardyaux1}. Two preliminary steps will simplify our computations. First, if $\alpha=2$ we bound $\exp\left(-c\frac{|x-y|^2}{t}\right)\lesssim 1 \wedge \frac{t^{1+\frac{d}{\alpha}}}{|x-y|^{d+\alpha}}$. Second, we replace each of the two factors $(1\wedge \ldots)^q$ by $(1\wedge\ldots)^{-r}$ with
	$$
	-r := q\wedge 0 \,,
	$$
    where we recall $q=\min\{p,(\alpha-1)_+\}$. Thus, $J_t^\alpha\leq \tilde J_t^\alpha$ with
	\begin{align*}
		\begin{split}
			\tilde J_t^{\alpha} (x,y) & := \left( \one_{x_d \vee y_d\leq t^{1/\alpha}} + \one_{x_d \vee y_d\geq t^{1/\alpha}} \one_{|x-y|\geq (x_d\wedge y_d)/2} \right) 
			\left(1\wedge \frac{x_d}{t^{1/\alpha}}\right)^{-r} \,\left(1\wedge \frac{y_d}{t^{1/\alpha}}\right)^{-r} \\
			& \qquad \times t^{-\frac{d}{\alpha}} \left( 1 \wedge \frac{t^{1+\frac{d}{\alpha}}}{|x-y|^{d+\alpha}} \right),
		\end{split}
	\end{align*}
	and it suffices to prove the assertion with $\tilde J^\alpha_t$ instead of $J^\alpha_t$.
	
	For that purpose we insert the cut-offs $\one_{x_d\vee y_d\leq t^{1/\alpha}}$ and $\one_{x_d\vee y_d\geq t^{1/\alpha}}$ and bound the two terms separately.	We have
	\begin{align*}
          \begin{split}
            & \int_0^\infty dt\, t^{-1- \frac s2} \tilde J_t^{\alpha}(x,y)\, \one_{x_d\vee y_d\leq t^{1/\alpha}} \, y_d^{\frac{\alpha s}2} \\
            & \sim y_d^{\frac{\alpha s}2} (x_dy_d)^{-r} \int_{(x_d\vee y_d)^\alpha}^\infty dt\, t^{-1-\frac s2+\frac{2r}{\alpha} -\frac d\alpha} \left( 1\wedge \frac{t^{1+\frac d\alpha}}{|x-y|^{d+\alpha}} \right) \\
            & \lesssim y_d^{\frac{\alpha s}2} (x_dy_d)^{-r} \left[ (|x-y| \! \vee \! x_d \! \vee \! y_d)^{-\frac{\alpha s}2+ 2r-d} + \one_{x_d \vee y_d \leq |x-y|} |x-y|^{-d-\alpha} (x_d \! \vee \! y_d)^{2r+\alpha-\frac{\alpha s}2} \right].
          \end{split}
	\end{align*}
	The first term here comes from the $t$-integral from $(|x-y|\vee x_d\vee y_d)^\alpha$ to $\infty$. This integral converges since $-\frac s2+\frac{2r}\alpha-\frac d\alpha<0$. (Note that $s>0$ and $2r\leq(1-\alpha)_+< 1$.) The second term comes from an upper bound on the integral between $(x_d\vee y_d)^\alpha$ and $|x-y|^\alpha$, in fact, from an upper bound on the integral between $0$ and $|x-y|^\alpha$. This integral converges since $-\frac s2 + \frac{2r}{\alpha}+1>0$.
	
	The above bound can be simplified since (using $r\geq 0$)
	$$
	\one_{x_d \vee y_d \leq |x-y|} |x-y|^{-d-\alpha} (x_d\vee y_d)^{2r+\alpha-\frac{\alpha s}2}
	\leq (|x-y|\vee x_d\vee y_d)^{-\frac{\alpha s}2+ 2r-d} \,.
	$$
	
	We now turn to the contribution to $\tilde J_t^{\alpha}$ from $\{x_d\vee y_d\geq t^{1/\alpha}\}$. We have
	\begin{align*}
		\begin{split}
			& \int_0^\infty dt\, t^{-1- \frac s2} \tilde J_t^{\alpha}(x,y)\, \one_{x_d \vee y_d\geq t^{1/\alpha}} \, y_d^{\frac{\alpha s}2} \\
			& \sim y_d^{\frac{\alpha s}2} \int_0^{(x_d\vee y_d)^\alpha} dt\, t^{-1-\frac s2 -\frac d\alpha} \left( 1 \wedge \frac{x_d\wedge y_d}{t^{1/\alpha}} \right)^{-r} 
			\left( 1\wedge \frac{t^{1+\frac d\alpha}}{|x-y|^{d+\alpha}} \right) \one_{|x-y|\geq (x_d\wedge y_d)/2} \\
			& \leq y_d^{\frac{\alpha s}2} |x-y|^{-d-\alpha} \int_0^{(x_d\vee y_d)^\alpha} dt\, t^{-\frac s2} \left( 1 \wedge \frac{x_d\wedge y_d}{t^{1/\alpha}} \right)^{-r} \one_{|x-y|\geq (x_d\wedge y_d)/2} \\
			& \lesssim y_d^{\frac{\alpha s}2} |x-y|^{-d-\alpha} \left[ (x_d\wedge y_d)^{\alpha-\frac{\alpha s}2} + (x_d\wedge y_d)^{-r} (x_d\vee y_d)^{r+\alpha-\frac{\alpha s}2} \right] \one_{|x-y|\geq (x_d\wedge y_d)/2} \,.
		\end{split}
	\end{align*}
	The first term here comes from the integral from $0$ to $(x_d\wedge y_d)^\alpha$. This converges since $s<2$. The second term comes from an upper bound on the integral from $(x_d\wedge y_d)^\alpha$ to $(x_d\vee y_d)^\alpha$, in fact, from an upper bound on the integral between $0$ and $(x_d\vee y_d)^\alpha$. This integral converges since $-\frac s2+\frac r\alpha>-1$.
	
	The above bound can be simplified since (using $r\geq 0$)
	$$
	(x_d\wedge y_d)^{\alpha-\frac{\alpha s}2} \leq (x_d\wedge y_d)^{-r} (x_d\vee y_d)^{r+\alpha-\frac{\alpha s}2} \,.
	$$
	
	To summarize, we have shown that
	\begin{align*}
          \begin{split}
            \int_0^\infty dt\, t^{-1-\frac s2} \tilde J_t^{\alpha}(x,y) \, y_d^{\frac{\alpha s}2}
            & \lesssim y_d^{\frac{\alpha s}2} (x_dy_d)^{-r} (|x-y|\vee x_d\vee y_d)^{-\frac{\alpha s}2+ 2r-d} \\
            & \quad + y_d^{\frac{\alpha s}2} |x-y|^{-d-\alpha} (x_d\wedge y_d)^{-r} (x_d\vee y_d)^{r+\alpha-\frac{\alpha s}2} \one_{|x-y|\geq (x_d\wedge y_d)/2}.
		\end{split}
	\end{align*}
	We claim that this is
	\begin{equation}
		\label{eq:jintbound}
		\lesssim \left( \frac{|x-y|\vee x_d\vee y_d}{\sqrt{x_d y_d}} \right)^{2r} \frac{(|x-y|\vee x_d\vee y_d)^{\alpha}}{(|x-y|\vee (x_d\wedge y_d))^{d+\alpha}} \,.
	\end{equation}
	Indeed, for the terms involving $s$ this follows from $y_d\leq x_d\vee y_d\leq |x-y|\vee x_d\vee y_d$ and for those involving $r$ it follows from $r\geq 0$ and
	$$
	\frac{|x-y|\vee x_d\vee y_d}{\sqrt{x_d y_d}} \geq \frac{x_d\vee y_d}{\sqrt{x_d y_d}} = \sqrt{ \frac{x_d\vee y_d}{x_d\wedge y_d} } \,.
	$$
	Moreover,
	$$
	(|x-y|\vee x_d\vee y_d)^{-d} \leq \frac{(|x-y|\vee x_d\vee y_d)^{\alpha}}{(|x-y|\vee (x_d\wedge y_d))^{d+\alpha}} \,,
	$$
	and
	$$
	|x-y|^{-d-\alpha} (x_d\vee y_d)^{\alpha-\frac{\alpha s}2} 
	\one_{|x-y|\geq (x_d\wedge y_d)/2}
	\lesssim \frac{(|x-y|\vee x_d\vee y_d)^{\alpha-\frac{\alpha s}{2}}}{(|x-y|\vee (x_d\wedge y_d))^{d+\alpha}} \,.
	$$
	This proves that \eqref{eq:jintbound} is an upper bound on the quantity of interest. The claimed $L^2(\R^d_+)$-boundedness now follows from Proposition \ref{schurkillip2} below, noting that $r\leq(1-\alpha)_+/2<1/2$.
\end{proof}

\begin{proposition}
  \label{schurkillip2}
  Let $\alpha>0$ and $0\leq r<\frac12$. Then the integral operator with integral kernel
  \begin{align*}
    \left( \frac{|x-y|\vee x_d\vee y_d}{\sqrt{x_d y_d}} \right)^{2r} \frac{(|x-y|\vee x_d\vee y_d)^{\alpha}}{(|x-y|\vee (x_d\wedge y_d))^{d+\alpha}}
  \end{align*}
  is bounded on $L^2(\R_+^d)$.
\end{proposition}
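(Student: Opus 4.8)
The plan is to prove the $L^2(\R^d_+)$-boundedness by a single Schur test that exploits the symmetries of the kernel. Denote the displayed kernel by $K(x,y)$. One first records that $K$ is nonnegative, symmetric under $x\leftrightarrow y$, invariant under the simultaneous tangential translations $x\mapsto x+(z',0)$, $y\mapsto y+(z',0)$ with $z'\in\R^{d-1}$, and homogeneous of degree $-d$, i.e.\ $K(\mu x,\mu y)=\mu^{-d}K(x,y)$ for all $\mu>0$: the first factor $\bigl(\tfrac{|x-y|\vee x_d\vee y_d}{\sqrt{x_dy_d}}\bigr)^{2r}$ is invariant under dilations and tangential translations, while the second factor contributes the homogeneity $\mu^{-d}$. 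Since $K$ is symmetric, it suffices to exhibit a positive weight $h$ on $\R^d_+$ with $\int_{\R^d_+}K(x,y)\,h(y)\,dy\lesssim h(x)$; the natural candidate, dictated by the two invariances, is a power of the distance to the boundary, and I would take $h(x)=x_d^{-1/2}$.

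The point of this choice is that, by tangential translation invariance and homogeneity, the function $F(x):=\int_{\R^d_+}K(x,y)\,y_d^{-1/2}\,dy$ is independent of $x'$ and satisfies $F(\mu x)=\mu^{-1/2}F(x)$, whence $F(x)=x_d^{-1/2}\,F(e_d)$ with $e_d:=(0,\dots,0,1)$; the whole problem thus reduces to verifying that the single number $I:=\int_{\R^d_+}K(e_d,y)\,y_d^{-1/2}\,dy$ is finite. I would estimate $I$ by splitting $\R^d_+$ into three regions according to $\rho:=|y-e_d|$ and $|y|$: (i) $|y-e_d|<\tfrac12$, where $\rho\le x_d\wedge y_d$ gives $K(e_d,y)\sim1$ and there is nothing to prove; (ii) $|y|\le2$ but $|y-e_d|\ge\tfrac12$, where $\rho\sim1$ forces $\rho\vee x_d\vee y_d\sim1$ and $\rho\vee(x_d\wedge y_d)\sim1$, hence $K(e_d,y)\sim y_d^{-r}$, which is integrable against $y_d^{-1/2}$ near $y_d=0$ precisely because $r+\tfrac12<1$; and (iii) $|y|>2$, where $\rho\sim|y|$ and both $\rho\vee x_d\vee y_d$ and $\rho\vee(x_d\wedge y_d)$ turn out to be comparable to $|y|$, so that the exponent $\alpha$ cancels and $K(e_d,y)\sim|y|^{2r-d}y_d^{-r}$. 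In polar coordinates $y=R\omega$, $R>2$, $\omega\in\Sph^{d-1}\cap\R^d_+$, the region-(iii) contribution to $I$ is a constant times $\int_2^\infty R^{r-3/2}\,dR\cdot\int_{\Sph^{d-1}\cap\R^d_+}\omega_d^{-r-1/2}\,d\sigma(\omega)$, both factors of which are finite exactly because $r<\tfrac12$ (for $d=1$ the angular factor is trivial). Summing the three bounds yields $I<\infty$, and Schur's test finishes the proof, with the operator norm bounded by $I$.

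The computation is elementary; the only point requiring a little care is the bookkeeping in region (iii), namely checking that $\rho=|y-e_d|$, $x_d\vee y_d$ and $x_d\wedge y_d$ collapse to the single scale $|y|$ there — using $|y|-1\le\rho\le|y|+1$ together with $y_d\le|y|$ — which is what makes the kernel reduce to $|y|^{2r-d}y_d^{-r}$ and isolates the exponent governing convergence at infinity. The hypothesis $r<\tfrac12$ then enters precisely as the condition for the radial integral $\int^\infty R^{r-3/2}\,dR$ to converge, consistent with the role of $r\le(1-\alpha)_+/2<\tfrac12$ in the application in the proof of Theorem~\ref{reversehardy}; no restriction on $\alpha$ beyond $\alpha>0$ is used.
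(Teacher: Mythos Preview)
Your proof is correct. The approach is close in spirit to the paper's but organized differently. The paper proceeds in two steps: first it integrates out the tangential variables $y'$ (respectively $x'$) and shows that the resulting marginal is bounded by the one-dimensional version of the same kernel, namely
\[
\left(\frac{x_d\vee y_d}{\sqrt{x_dy_d}}\right)^{2r}\frac{(x_d\vee y_d)^\alpha}{(|x_d-y_d|\vee(x_d\wedge y_d))^{1+\alpha}};
\]
then it performs a weighted Schur test on this one-dimensional kernel with weight $(x_d/y_d)^\beta$ for any $r<\beta<1-r$. Your argument instead runs the Schur test directly in $d$ dimensions with the weight $h(x)=x_d^{-1/2}$ (the symmetric choice $\beta=\tfrac12$ in the paper's range), and exploits tangential translation invariance together with the homogeneity $K(\mu x,\mu y)=\mu^{-d}K(x,y)$ to collapse the Schur condition to the finiteness of a single integral $I=\int K(e_d,y)\,y_d^{-1/2}\,dy$, which you then estimate via a polar decomposition. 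Both routes hinge on exactly the same constraint $r<\tfrac12$, which in your version appears as the convergence condition for the radial integral $\int^\infty R^{r-3/2}\,dR$ and for the angular factor $\int\omega_d^{-r-1/2}\,d\sigma$, and in the paper's version as the nonemptiness of the interval $(r,1-r)$. Your presentation is a bit more economical; the paper's has the minor advantage of isolating the tangential-marginal bound \eqref{eq:schurmarginal} as a separate fact and of showing that the whole range $\beta\in(r,1-r)$ works.
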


\begin{proof}
	\emph{Step 1.}
	We denote the kernel in the proposition by $k(x,y)$. As a preliminary step to the main argument, let us carry out the integration over the $\R^{d-1}$-variables. We claim that
	\begin{equation}
		\label{eq:schurmarginal}
		\int_{\R^{d-1}} dy'\, k(x,y) \lesssim \left( \frac{x_d\vee y_d}{\sqrt{x_d y_d}} \right)^{2r} \frac{(x_d\vee y_d)^{\alpha}}{(|x_d-y_d|\vee (x_d\wedge y_d))^{1+\alpha}} \,.
	\end{equation}
	Note that the kernel on the right side is the kernel corresponding to the case $d=1$ of the proposition. (Indeed, one has $|x_d-y_d|\leq x_d\vee y_d$, so $x_d\vee y_d = |x_d-y_d|\vee x_d\vee y_d$.)
	
	To prove \eqref{eq:schurmarginal}, we distinguish between the regions where $|x-y|\gtrless x_d\vee y_d$. We find
	\begin{align}
		\label{eq:schurmarginal2}
		\begin{split}
			\int_{\R^{d-1}} dy'\, k(x,y) & \lesssim \left( \frac{x_d\vee y_d}{\sqrt{x_d y_d}} \right)^{2r} \int_{|x'-y'|<x_d\vee y_d}dy'\, \frac{(x_d\vee y_d)^\alpha}{(|x'-y'|\vee|x_d-y_d|\vee(x_d\wedge y_d))^{d+\alpha}} \\
			& \quad + \int_{|x-y|>x_d\vee y_d} dy'\, \left( \frac{|x-y|}{\sqrt{x_d y_d}} \right)^{2r} \frac{1}{|x-y|^d} \,.
		\end{split}
	\end{align}
	In the first integral we scale $y'=x'+(|x_d-y_d|\vee(x_d\wedge y_d))w$ and obtain
	\begin{align*}
		& \left( \frac{x_d\vee y_d}{\sqrt{x_d y_d}} \right)^{2r} \int_{|x'-y'|<x_d\vee y_d}dy'\, \frac{(x_d\vee y_d)^\alpha}{(|x'-y'|\vee|x_d-y_d|\vee(x_d\wedge y_d))^{d+\alpha}} \\
		& = \left( \frac{x_d\vee y_d}{\sqrt{x_d y_d}} \right)^{2r} \frac{(x_d\vee y_d)^\alpha}{(|x_d-y_d|\vee(x_d\wedge y_d))^{1+\alpha}}
		\int_{|w|<(x_d\vee y_d)/(|x_d-y_d|\vee(x_d\wedge y_d))} \frac{dw}{(|w|\vee 1)^{d+\alpha}} \,.
	\end{align*}
	Bounding the latter integral by a constant, we obtain a term of the form \eqref{eq:schurmarginal}.
	
	We turn now to the second integral in \eqref{eq:schurmarginal2} and claim that
	\begin{equation}
		\label{eq:schurmarginal3}
		\int_{|x-y|>x_d\vee y_d} dy'\, \left( \frac{|x-y|}{\sqrt{x_d y_d}} \right)^{2r} \frac{1}{|x-y|^d} \lesssim \left( \frac{x_d\vee y_d}{\sqrt{x_d y_d}} \right)^{2r} \frac{1}{x_d\vee y_d} \,.
	\end{equation}
	Since $|x_d-y_d|\vee (x_d\wedge y_d)\leq x_d\vee y_d$, this will prove \eqref{eq:schurmarginal}.
	 
	To prove \eqref{eq:schurmarginal3} we first restrict the integral to $|x'-y'|>(x_d\vee y_d)/2$ and find, changing variables $y'=x'+|x_d-y_d|w$,
	\begin{align*}
		& \int_{|x'-y'|>(x_d\vee y_d)/2} dy'\, \left( \frac{|x-y|}{\sqrt{x_d y_d}} \right)^{2r} \frac{1}{|x-y|^d} \\ 
		& = \frac{|x_d-y_d|^{2r-1}}{(x_dy_d)^r} \int_{|w|>(x_d\vee y_d)/(2|x_d-y_d|)} \frac{dw}{(1+|w|^2)^{(d-2r)/2}} \\
		& \sim \left( \frac{x_d\vee y_d}{\sqrt{x_dy_d}} \right)^{2r} (x_d\vee y_d)^{-1} \,.
	\end{align*}
	Here we used $r<\frac12$. This bound is of the form \eqref{eq:schurmarginal3}.
	
	It remains to compute the integral in \eqref{eq:schurmarginal3} where the restriction $|x-y|>x_d\vee y_d$ is replaced by $|x'-y'|\leq (x_d\vee y_d)/2$. In the latter region we have
	$$
	\tfrac14(x_d\vee y_d)^2  + |x_d-y_d|^2 \geq |x-y|^2 \geq (x_d\vee y_d)^2 \,,
	$$
	and therefore $|x_d-y_d|\gtrsim x_d\vee y_d$. Clearly $|x_d-y_d|\leq x_d\vee y_d$ and therefore $|x-y|\sim x_d\vee y_d$. Thus,
	\begin{align*}
		& \int_{2|x'-y'|\leq x_d\vee y_d <|x-y|} dy'\, \left( \frac{|x-y|}{\sqrt{x_d y_d}} \right)^{2r} \frac{1}{|x-y|^d} \\
		& \sim \left( \frac{x_d\vee y_d}{\sqrt{x_d y_d}} \right)^{2r} \frac{1}{(x_d\vee y_d)^d} \int_{2|x'-y'|\leq x_d\vee y_d <|x-y|} dy' \\
		& \lesssim \left( \frac{x_d\vee y_d}{\sqrt{x_d y_d}} \right)^{2r} \frac{1}{x_d\vee y_d} \,,
	\end{align*}
	which is again of the form \eqref{eq:schurmarginal3}. This completes the proof of \eqref{eq:schurmarginal}.
	
	\medskip
	
	\emph{Step 2.}	
	We perform weighted Schur tests for the operator with kernel given by the right side of \eqref{eq:schurmarginal}. As weight we choose
	$$
	w(x,y)=\left( \frac{x_d}{y_d} \right)^\beta
	\qquad\text{with}\ r<\beta<1-r \,.
	$$
	Since $r<\frac12$, it is possible to find such a $\beta$.
	
	For the first part of the Schur test, we use \eqref{eq:schurmarginal} to bound
	\begin{align*}
		\int_{\R^d_+} dy\, w(x,y) k(x,y) & \sim \int_0^\infty dy_d\, \left( \frac{x_d}{y_d} \right)^\beta \left( \frac{x_d\vee y_d}{\sqrt{x_d y_d}} \right)^{2r} \frac{(x_d\vee y_d)^{\alpha}}{(|x_d-y_d|\vee (x_d\wedge y_d))^{1+\alpha}} \\
		& = \int_0^\infty dt\, t^{-\beta -r} \frac{(1\vee t)^{\alpha+2r}}{(|1-t|\vee (1\wedge t))^{1+\alpha}} \\
		& \sim \int_0^\infty dt\, t^{-\beta-r} (1\wedge t^{-1+2r}) <\infty \,.
	\end{align*}
	The finiteness of the last integral uses the assumptions $r<\beta<1-r$.

	For the second part of the Schur test, we note that, by symmetry, \eqref{eq:schurmarginal} remains valid with $dy'$ replaced by $dx'$. Thus,
	\begin{align*}
		\int_{\R^d_+} dx\, w(x,y)^{-1} k(x,y) & \sim \int_0^\infty dx_d\, \left( \frac{y_d}{x_d} \right)^\beta \left( \frac{x_d\vee y_d}{\sqrt{x_d y_d}} \right)^{2r} \frac{(x_d\vee y_d)^{\alpha}}{(|x_d-y_d|\vee (x_d\wedge y_d))^{1+\alpha}} \\
		& = \int_0^\infty dt\, t^{-\beta -r} \frac{(1\vee t)^{\alpha+2r}}{(|1-t|\vee (1\wedge t))^{1+\alpha}} <\infty \,,
	\end{align*}
	as before. The $L^2(\R^d_+)$-boundedness therefore follows from the Schur test.
\end{proof}

\section{Commutator bounds}\label{sec:comm}

Throughout this section we assume that $0<\alpha<2$. Our goal is to bound the commutators
$$
[(-\Delta)^{\alpha/2},\zeta] v(x) = \mathcal A(d,-\alpha) \int_{\R^d} \frac{\zeta(x)-\zeta(y)}{|x-y|^{d+\alpha}} v(y)\,dy
$$
for functions $v$ supported in $\overline{\R^d_+}$. In general the integral on the right side does not converge absolutely and should be understood as a principal value integral (whose converges we will follow from our results).

We will impose certain boundedness and decay assumptions on $v$, as well as, for $\alpha\geq 1$, mild regularity assumptions. The function $\zeta$ is a cut-off function and we are interested in tracking the dependence of the commutator on the size of the transition zone, where $\zeta$ switches from zero to one.

This section is split into three parts, corresponding to different choices of the cut-off function $\zeta$. In Subsection \ref{sec:cutoffrad} we will consider a cut off at a large distance from the origin, in Subsection \ref{sec:cutoffbdry} a cut off at a small distance from the boundary hyperplane, and in Subsection \ref{sec:cutoffcomb} the combination of both.

The assumption on $v$ will always be of the form
\begin{equation}
	\label{eq:assv1}
	|v(x)|\leq (1\wedge |x|^{-d-\alpha}) (1\wedge x_d)^p 
	\qquad\text{for all}\ x\in\R^d_+ \, 
\end{equation}
with a certain parameter $p\geq\frac{\alpha-1}{2}$. This bound is reminiscent of the heat kernel bound in Theorem \ref{heatkernel} and, in fact, in the next section we will use this theorem to verify \eqref{eq:assv1} in our application where $v\in\me{-tL_\lambda}C_c^\infty(\R_+^d)$. There, the parameter $p$ will depend on $\lambda$ as in our main result, but in this section $p$ is an arbitrary parameter.

The additional regularity assumptions will be formulated in terms of the following H\"older seminorms. For a function $u$ on a set $\Omega$ and $0<\beta\leq 2$, we write
\begin{equation}
  \label{eq:holder}
  [u]_{C^\beta(\Omega)} :=
  \begin{cases}
    \sup_{x,y\in\Omega} \frac{|u(x)-u(y)|}{|x-y|^\beta} & \text{if}\ 0<\beta\leq 1 \,, \\
    \sup_{x,y\in\Omega} \frac{|\nabla u(x)-\nabla u(y)|}{|x-y|^{\beta-1}} & \text{if}\ 1<\beta\leq 2 \,.
  \end{cases}
\end{equation}
Our assumption on $v$ will then read
\begin{equation}
	\label{eq:assv2}
	[v]_{C^\beta(B_{\ell_x}(x))} \leq (1\wedge |x|^{-d-\alpha}) \, (1\wedge x_d)^{p-\beta}
	\qquad\text{for all}\ x\in\R^d_+\ \text{with}\ \ell_x := 1\wedge \tfrac{x_d}2 \,.
\end{equation}
We will always assume that $\beta>\alpha-1$.

\subsection{Radial cut-off}\label{sec:cutoffrad}

In this subsection we bound the term
$$
I(x) := \int_{\R^d} \frac{\chi(x)-\chi(y)}{|x-y|^{d+\alpha}}v(y)\,dy \,,
$$
where $v$ is supported in $\overline{\R^d_+}$. Concerning the function $\chi$ we assume that, for a certain parameter $R\geq 1$,
\begin{equation}
	\label{eq:asschi1}
	0\leq\chi\leq 1 \,,
	\qquad
	\chi(x) = 1 \ \text{if}\ |x|\leq R \,,
	\qquad
	\chi(x) = 0 \ \text{if}\ |x|\geq 2R \,,
	\qquad
	|\nabla\chi|\lesssim R^{-1} \,,
\end{equation}
as well as, if $\alpha\geq 1$,
\begin{equation}
  \label{eq:asschi2}
  |D^2\chi|\lesssim R^{-2} \,.
\end{equation}
Here $D^2\chi$ denotes the Hessian of $\chi$.

\begin{lemma}\label{radial1}
  Let $0<\alpha<2$. Let $R\geq 1$, assume that $\chi$ satisfies \eqref{eq:asschi1} and, if $\alpha\geq 1$ also \eqref{eq:asschi2}. Let $p\geq \tfrac{\alpha-1}2$, assume that $v$ satisfies \eqref{eq:assv1} and, if $\alpha\geq 1$, also \eqref{eq:assv2} with some $\beta>\alpha-1$.
	\begin{itemize}
		\item[(a)] If $\alpha<1$, then
		$$
		|I(x)| \lesssim \one_{|x|\leq R} R^{-d-2\alpha} + \one_{|x|>R} |x|^{-d-\alpha}
		\qquad\text{for all}\ x\in\R^d_+ \,.
		$$
		\item[(b)] If $\alpha\geq 1$, then
		\begin{align*}
			|I(x)| & \lesssim \one_{|x|\leq R} R^{-d-2\alpha} + \one_{|x|>R} |x|^{-d-\alpha} \\
			& \quad + \one_{|x|\sim R}
			R^{-d-\alpha-1} \left( (1\wedge x_d)^{-(p-\alpha+1)_-} + \one_{p=\alpha-1}\ln\tfrac1{1\wedge x_d} + \one_{\alpha=1} \ln R \right) \\
			& \quad \text{for all}\ x\in\R^d_+ \,.
		\end{align*}
		\item[(c)] In either case,
		$$
		\| I \|_{L^2(\R^d_+)} \lesssim R^{-\alpha-d/2} \,.
		$$
	\end{itemize}
\end{lemma}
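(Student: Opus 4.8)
The strategy is to control $I(x)$ by separating $\R^d_+$ according to how $x$ sits relative to the transition annulus $\{|x|\sim R\}$, and on that annulus to split the $y$-integration at the three length scales $\ell_x=1\wedge\tfrac{x_d}2$, $|x|\sim R$, and the range in between. Away from the annulus no principal value is needed. If $|x|\le R/2$, then $\chi\equiv1$ on $B_{R/2}(x)$, so $\chi(x)-\chi(y)=1-\chi(y)$ vanishes for $|y|\le R$; since $|x-y|\ge\tfrac12|y|$ there, \eqref{eq:assv1} together with $p>-1$ (which holds since $p\ge\tfrac{\alpha-1}2>-1$) gives $|I(x)|\lesssim\int_{|y|>R}|y|^{-2(d+\alpha)}(1\wedge y_d)^p\,dy\sim R^{-d-2\alpha}$. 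If $|x|\ge4R$, then $\chi\equiv0$ on $B_{2R}(x)$, so $\chi(x)-\chi(y)=-\chi(y)$ is supported in $\{|y|<2R\}$ where $|x-y|\ge\tfrac12|x|$; since $\int_{\R^d_+}(1\wedge|y|^{-d-\alpha})(1\wedge y_d)^p\,dy<\infty$, this yields $|I(x)|\lesssim|x|^{-d-\alpha}$. These are the two main terms $\one_{|x|\le R}R^{-d-2\alpha}+\one_{|x|>R}|x|^{-d-\alpha}$.

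\textbf{The transition annulus $R/2<|x|<4R$ (so $|x|\sim R\ge1$).} Split the $y$-integral into $\{|x-y|<\tfrac12\ell_x\}$, $\{\tfrac12\ell_x\le|x-y|<|x|/4\}$ and $\{|x-y|\ge|x|/4\}$. On the outer range $|\chi(x)-\chi(y)|\le1$ and $|x-y|\gtrsim R$: if $|x|\le R$ we again restrict to $|y|>R$ and get $\lesssim R^{-d-\alpha}\int_{|y|>R}|v|\lesssim R^{-d-\alpha}\cdot R^{-\alpha}=R^{-d-2\alpha}$, and if $R<|x|<4R$ the crude bound $\lesssim R^{-d-\alpha}\int_{\R^d_+}|v|\lesssim|x|^{-d-\alpha}$ suffices. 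On the middle range $|y|\sim R$, so $|v(y)|\lesssim R^{-d-\alpha}(1\wedge y_d)^p$, and the Lipschitz bound $|\chi(x)-\chi(y)|\lesssim R^{-1}|x-y|$, a dyadic decomposition in $\rho=|x-y|$, and the estimate $\int_{|x-y|\sim\rho}(1\wedge y_d)^p\,dy\lesssim\rho^{d-1}(\rho^{p+1}\one_{\rho\le1}+\rho\,\one_{\rho>1})$ (valid for $\rho\gtrsim\ell_x$, using $p>-1$) reduce this range to $R^{-1-d-\alpha}\sum_{\ell_x/2\le\rho\lesssim R}(\rho^{1-\alpha+p}\one_{\rho\le1}+\rho^{1-\alpha}\one_{\rho>1})$. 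When $\alpha<1$ the exponents $1-\alpha+p\ge\tfrac{1-\alpha}2$ and $1-\alpha$ are positive, the sum is $\lesssim R^{1-\alpha}$, and the middle range is absorbed into the main terms; note that \eqref{eq:assv2} is not used when $\alpha<1$. When $\alpha\ge1$ (so $p\ge0$) the sum equals, up to constants, $(1\wedge x_d)^{-(p-\alpha+1)_-}+\one_{p=\alpha-1}\ln\tfrac1{1\wedge x_d}+\one_{\alpha=1}\ln R$, which is exactly the bracket in (b). The inner range $|x-y|<\tfrac12\ell_x$ matters only for $\alpha\ge1$, where $I(x)$ is a principal value, and where on $B_{\ell_x}(x)$ one has $|y|\sim R$, $y_d\sim x_d$. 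Writing $v(y)=v(x)+(v(y)-v(x))$, the first piece is $v(x)\,\mathrm{p.v.}\!\int\tfrac{\chi(x)-\chi(y)}{|x-y|^{d+\alpha}}\,dy$, which by the symmetry $y-x\mapsto x-y$ and a second-order Taylor expansion of $\chi$ is $\lesssim|v(x)|\,\|D^2\chi\|_\infty\ell_x^{2-\alpha}\lesssim R^{-d-\alpha-2}$; in the second piece $|\chi(x)-\chi(y)|\lesssim R^{-1}|x-y|$ and $|v(y)-v(x)|\lesssim R^{-d-\alpha}(1\wedge x_d)^{p-\beta}|x-y|^\beta$ if $\beta\le1$, while for $\beta>1$ one uses $|v(y)-v(x)|\le\|\nabla v\|_{L^\infty(B_{\ell_x}(x))}|x-y|\lesssim R^{-d-\alpha}(1\wedge x_d)^{p-1}|x-y|$, the gradient bound coming from interpolating \eqref{eq:assv1} and \eqref{eq:assv2}; since $\beta>\alpha-1$, respectively $\alpha<2$, the radial integral converges at the origin and this piece is $\lesssim R^{-1-d-\alpha}(1\wedge x_d)^{-(p-\alpha+1)_-}$, again within the bracket of (b). Collecting the three ranges on the annulus and the two main regions proves parts (a) and (b).

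\textbf{Part (c).} Square and integrate the pointwise bounds. The main terms give $\|I\|_{L^2(\R^d_+)}^2\lesssim R^{-2(d+2\alpha)}R^d+\int_{|x|>R}|x|^{-2(d+\alpha)}\,dx\lesssim R^{-d-4\alpha}+R^{-d-2\alpha}\lesssim R^{-d-2\alpha}$. For (b) the extra term contributes $\int_{|x|\sim R}R^{-2(d+\alpha+1)}[(1\wedge x_d)^{-(p-\alpha+1)_-}+\one_{p=\alpha-1}\ln\tfrac1{1\wedge x_d}+\one_{\alpha=1}\ln R]^2\,dx$; the $x'$-integration over $\{|x|\sim R\}$ costs a factor $\lesssim R^{d-1}$, the $x_d$-integral $\int_0^{\lesssim R}(1\wedge x_d)^{-2(p-\alpha+1)_-}\,dx_d\lesssim R$ converges near $x_d=0$ because $p\ge\tfrac{\alpha-1}2>\alpha-\tfrac32$ forces $2(p-\alpha+1)_-<1$, and the two logarithmic integrals are $O(1)$ and $O\!\big(R(\ln R)^2\big)$ respectively, so this piece is $\lesssim R^{-d-2\alpha-2}+\one_{\alpha=1}R^{-d-4}(\ln R)^2\lesssim R^{-d-2\alpha}$. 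Hence $\|I\|_{L^2(\R^d_+)}\lesssim R^{-\alpha-d/2}$ in both cases.

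\textbf{Main obstacle.} The delicate point is the transition annulus near the hyperplane $\{x_d=0\}$: there the Lipschitz scale $R$ of $\chi$ competes with the boundary vanishing $(1\wedge x_d)^p$ of $v$, and one must extract precisely the power $(1\wedge x_d)^{-(p-\alpha+1)_-}$ together with the borderline logarithmic terms at $p=\alpha-1$ and $\alpha=1$; arranging the principal-value cancellation in the inner range to mesh correctly with the $x_d$-dependent Hölder scale $\ell_x$ is the main bit of bookkeeping.
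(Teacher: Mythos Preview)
Your proof is correct and follows essentially the same approach as the paper: the same three-region split in $|x|$, the same Lipschitz/H\"older/$C^2$ handling of the principal value on the transition annulus for $\alpha\ge 1$, and the same $L^2$ integration for (c). The only cosmetic difference is that on the annulus you estimate the middle-range integral via a dyadic decomposition in $\rho=|x-y|$ together with the shell bound $\int_{|x-y|\sim\rho}(1\wedge y_d)^p\,dy\lesssim\rho^{d-1}(\rho^{p+1}\one_{\rho\le1}+\rho\,\one_{\rho>1})$, whereas the paper splits at $y_d=3$ and carries out the $y'$-integration first; both routes give the same bracket in (b).
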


In the formulation of (b) we recall the notation $a_-:=\max\{-a,0\}$.

\begin{proof}[Proof of Lemma~\ref{radial1}. Case $\alpha<1$.]
	\emph{Step 1.} We claim that
	\begin{align}\label{eq:radial1}
		|I(x)| & \lesssim \one_{|x|\leq 4R} \int_{|y|>R} \frac{1}{|y|^{d+\alpha}} |v(y)| \,dy + \one_{|x|>\tfrac R2} \frac{1}{|x|^{d+\alpha}} \int_{|y|\leq 2R} |v(y)| \,dy \notag \\
		& \quad + \one_{\tfrac R2<|x|\leq 4R} \frac 1R \int_{\tfrac R4<|y|\leq 8R} \frac{1}{|x-y|^{d+\alpha-1}} |v(y)| \,dy \,.
	\end{align}
	
	To prove \eqref{eq:radial1} we note that, if $|x|\leq\tfrac R2$, then
	$$
	|I(x)| = \left| \int_{|y|>R} \ldots \right| \lesssim \int_{|y|>R} \frac{1}{|y|^{d+\alpha}} |v(y)| \,dy \,.
	$$
	If $|x|> 4R$, then
	$$
	|I(x)| = \left| \int_{|y|\leq 2R} \ldots \right| \lesssim \int_{|y|\leq 2R} \frac{1}{|x|^{d+\alpha}}|v(y)|\,dy \,.
	$$
	Finally, if $\tfrac R2<|x|\leq 4R$, then
	\begin{align*}
          |I(x)|
          & \lesssim \int_{|y|>8R} \frac{1}{|y|^{d+\alpha}} |v(y)| \,dy + \int_{|y|\leq \tfrac R4} \frac{1}{|x|^{d+\alpha}} |v(y)| \,dy \\
          & \quad + \frac1R \int_{\tfrac R4<|y|\leq 8R} \frac{1}{|x-y|^{d+\alpha-1}} |v(y)| \,dy \,,
	\end{align*}
	where we used $| \chi(x) - \chi(y)|\lesssim R^{-1}|x-y|$, which follows from the gradient bound on $\chi$. Combining the above bounds, we obtain \eqref{eq:radial1}.
	
	\medskip
	
	\emph{Step 2.} We now insert the bounds on $v$ into the right side of \eqref{eq:radial1}. We clearly have
	$$
	\int_{|y|\leq 2R} (1\wedge |y|^{-d-\alpha})(1\wedge y_d)^p \,dy \lesssim 1
	$$
	and
	$$
	\int_{|y|>R} \frac1{|y|^{d+\alpha}} (1\wedge |y|^{-d-\alpha})(1\wedge y_d)^p \,dy \lesssim R^{-d-2\alpha} \,.
	$$
	The last bound is clear if $p\geq 0$ (which is the only case relevant when $\lambda\geq 0$). When $p<0$ the same bound is valid for the integral restricted to $y_d\geq 1$. For the integral with the opposite restriction is easily seen to be bounded by $R^{-d-1-2\alpha}$. (Note that in this integral one has $|y'|\sim |y|$.) Finally, if $\tfrac R2<|x|\leq 4R$, then, since $\alpha\in(0,1)$,
	\begin{align*}
          & \int_{\tfrac R4<|y|\leq 8R} \frac{1}{|x-y|^{d+\alpha-1}} (1\wedge |y|^{-d-\alpha})(1\wedge y_d)^p \,dy \\
          & \quad \lesssim R^{-d-\alpha} \int_{\tfrac R4<|y|\leq 8R} \frac{(1\wedge y_d)^p}{|x-y|^{d+\alpha-1}} \,dy
            \lesssim R^{-d-2\alpha+1} \,.
	\end{align*}
	Here, for an upper bound, we replace the integral over $\{\tfrac R4<|y|\leq 8R\}$ by the integral over $|x-y|\leq 12 R$. For $p\geq 0$ we can drop the factor $(1\wedge y_d)^p$. For $p<0$ we argue similarly as before by distinguishing the cases $y_d\leq 1$ and $y_d>1$.
	
	This proves the claimed pointwise bound in (a). The $L^2$-bound in (c) follows by a simple integration.
\end{proof}

It remains to prove Lemma \ref{radial1} for $\alpha\geq 1$. We discuss the first part of the argument in greater generality since it will also be useful in the next subsection. We are interested in bounding
$$
\int_{\R^d} \frac{\zeta(x)-\zeta(y)}{|x-y|^{d+\alpha}} v(y)\,dy \,,
$$
where $\zeta$ is $C^2$ and $v$ is H\"older continuous with some exponent $\beta$. In the setting of Lemma \ref{radial1} we have $\zeta=\chi$.

We fix a local length scale $\ell_x$, depending on $x\in\R^d_+$, and we decompose
\begin{align}
	\label{eq:commutator1lalpha}
	\begin{split}
		\int_{\R^d_+} \frac{\zeta(x)-\zeta(y)}{|x-y|^{d+\alpha}} v(y)\,dy
		& = \int_{|y-x|\leq\ell_x} \frac{\zeta(x)-\zeta(y)}{|x-y|^{d+\alpha}} (v(y)-v(x))\,dy \\
		& \quad + v(x) \int_{|y-x|\leq\ell_x} \frac{\zeta(x)-\zeta(y)+\nabla\zeta(x)\cdot(y-x)}{|x-y|^{d+\alpha}} \,dy \\
		& \quad + \int_{|y-x|>\ell_x} \frac{\zeta(x)-\zeta(y)}{|x-y|^{d+\alpha}} v(y)\,dy \,.
	\end{split}
\end{align}
Note that because of the principal value we were free to introduce the term $\nabla\zeta(x)\cdot(y-x)$, which contributes zero to the integral (because of oddness), but makes it converge absolutely. We will always bound the first term by
\begin{align}\label{eq:commutator1lalpha1}
  \begin{split}
    & \left| \int_{|y-x|\leq\ell_x} \frac{\zeta(x)-\zeta(y)}{|x-y|^{d+\alpha}} (v(y)-v(x))\,dy \right| \\
    & \quad \leq [v]_{C^\beta(B_{\ell_x}(x))} [\zeta]_{C^1(B_{\ell_x}(x))} \int_{|y-x|\leq\ell_x} \frac{dy}{|x-y|^{d+\alpha-1-\beta}} \,dy \\
    & \quad \lesssim [v]_{C^\beta(B_{\ell_x}(x))} [\zeta]_{C^1(B_{\ell_x}(x))} \ell_x^{-\alpha+1+\beta}
  \end{split}
\end{align}
for some $\beta>\alpha-1$. Similarly, we will bound the second term by
\begin{align}\label{eq:commutator1lalpha2}
  \begin{split}
    & \left| v(x) \int_{|y-x|\leq\ell_x} \frac{\zeta(x)-\zeta(y)+\nabla\zeta(x)\cdot(y-x)}{|x-y|^{d+\alpha}} \,dy \right| \\
    & \quad \leq |v(x)| [\zeta]_{C^2(B_{\ell_x}(x))} \int_{|y-x|\leq\ell_x} \frac{dy}{|x-y|^{d+\alpha-2}} \,dy
      \lesssim |v(x)| [\zeta]_{C^2(B_{\ell_x}(x))} \ell_x^{2-\alpha} \,.
  \end{split}
\end{align}

After these preliminaries we return to the proof of Lemma \ref{radial1}.

\begin{proof}[Proof of Lemma~\ref{radial1}. Case $\alpha\geq 1$.]
	We apply the preceding discussion with $\zeta=\chi$. For the first term in \eqref{eq:commutator1lalpha} we use the bound \eqref{eq:commutator1lalpha1} and note that $[\chi]_{C^1(B_{\ell_x}(x))}$ vanishes unless $|x|\sim R$, in which case it is $\lesssim R^{-1}$. This leads to a bound
	$$
	\one_{|x|\sim R} (1\wedge |x|^{-d-\alpha}) (1\wedge x_d)^{p-\alpha+1} R^{-1} \,. 
	$$
	Similarly, for the second term in \eqref{eq:commutator1lalpha} using the bound \eqref{eq:commutator1lalpha2} we obtain
	$$
	\one_{|x|\sim R} (1\wedge |x|^{-d-\alpha}) (1\wedge x_d)^{p-\alpha+2} R^{-2} \,.
	$$
	Since $1\wedge x_d \leq 1\leq R$, this bound on the second term is smaller than the bound on the first term and can be ignored.
	
	We now turn to the third term in \eqref{eq:commutator1lalpha},
	$$
	\widetilde{I}(x) := \int_{|y-x|>\ell_x} \frac{\chi(x)-\chi(y)}{|x-y|^{d+\alpha}} v(y)\,dy \,.
	$$
	We claim that
	\begin{align}\label{eq:radial2}
          \left| \widetilde{I}(x) \right| & \lesssim \one_{|x|\leq 4R} \int_{|y|>R} \frac{1}{|y|^{d+\alpha}} |v(y)| \,dy + \one_{|x|>\tfrac R2} \frac{1}{|x|^{d+\alpha}} \int_{|y|\leq 2R} |v(y)|\,dy \notag \\
                                          & \quad + \one_{\tfrac R2<|x|\leq 4R} \frac1R \int_{\tfrac R4<|y|\leq 8R} \frac{\one_{|x-y|>\ell_x}}{|x-y|^{d+\alpha-1}}|v(y)|\,dy \,.
	\end{align}
	This is proved in the exact same way as \eqref{eq:radial1}.
	
	We now insert the bounds on $v$ into the right side of \eqref{eq:radial2}. The first two terms are bounded as in the case $\alpha<1$. The bound for the third term in \eqref{eq:radial2}, however, is different now, since $|x-y|^{-d-\alpha+1}$ is not locally integrable. We claim that
	\begin{align*}
          & \int_{\tfrac R4<|y|\leq 8R} \frac{\one_{|x-y|>\ell_x}}{|x-y|^{d+\alpha-1}}|v(y)|\,dy \\
          & \quad \lesssim R^{-d-\alpha} \left( (1\wedge x_d)^{-(p-\alpha+1)_-} + \one_{p=\alpha-1}\ln\tfrac1{1\wedge x_d} + \one_{\alpha=1} \ln R \right).
	\end{align*}
	Indeed, the factor of $R^{-d-\alpha}$ comes from one factor in the bound on $v$, so it suffices to prove
	\begin{align*}
          & \int_{\tfrac R4<|y|\leq 8R} \frac{\one_{|x-y|>\ell_x}}{|x-y|^{d+\alpha-1}}(1\wedge y_d)^p \,dy \\
          & \quad \lesssim (1\wedge x_d)^{-(p-\alpha+1)_-} + \one_{p=\alpha-1}\ln\tfrac1{1\wedge x_d} + \one_{\alpha=1} \ln R \,.
	\end{align*}
	We split the $y$-integral according to whether $y_d>3$ or $y_d\leq 3$. Beginning with the former case, we note that $|x-y|\geq |x_d-y_d|> 1$ if $x_d\leq 2$. Thus, for all $x_d>0$,
	\begin{align*}
          \int_{\tfrac R4<|y|\leq 8R} \frac{\one_{|x-y|>\ell_x}\one_{y_d>3}}{|x-y|^{d+\alpha-1}}(1\wedge y_d)^p \,dy
          & \lesssim \int_{\tfrac R4<|y|\leq 8R} \frac{\one_{|x-y|>1}}{|x-y|^{d+\alpha-1}} \,dy \\
          & \leq 1 + (\ln R)\one_{\alpha=1} \,. 
	\end{align*}
	Next, we consider the integral over $y_d\leq 3$. Performing the $y'$-integration over all of $\R^{d-1}$, we obtain
	\begin{align*}
          & \int_{\tfrac R4<|y|\leq 8R} \frac{\one_{|x-y|>\ell_x} \one_{y_d \leq 3}}{|x-y|^{d+\alpha-1}}(1\wedge y_d)^p \,dy \\
          & \quad \lesssim \int_{y_d\leq 3} \frac{y_d^p}{|x'-y'|^{d+\alpha-1}+|x_d-y_d|^{d+\alpha-1}+\ell_x^{d+\alpha-1}}\,dy
            \lesssim \int_0^3 \frac{y_d^p}{|x_d-y_d|^{\alpha}+\ell_x^{\alpha}}\,dy_d
	\end{align*}
	This integral is easily seen to be
	$$
	\lesssim x_d^{-\alpha}\one_{x_d>1} + x_d^{-(p-\alpha+1)_-}\one_{x_d\leq 1} + \ln\frac{1}{1\wedge x_d}\one_{p=\alpha-1} \,.
	$$
	(To prove this for $x_d\leq 3/2$ we split the $y_d$-integral at $y_d=2x_d$.) This proves the claimed bound.
	
	Combining all these bounds, we obtain the claimed pointwise bound on $I$ in statement (b) of Lemma~\ref{radial1}. The $L^2$-bound in (c) follows by integration. On easily verifies that the `additional' term (compared to the case $\alpha<1$) is subdominant. Here we note that $2(p-\alpha+1)>-1$, which makes the relevant $x_d$-integral finite near the origin.
\end{proof}


\subsection{Boundary cut-off}\label{sec:cutoffbdry}

In this subsection we bound the term
$$
II(x) := \int_{\R^d} \frac{\theta(x)-\theta(y)}{|x-y|^{d+\alpha}}v(y)\,dy \,.
$$
As before, the function $v$ will be supported in $\overline{\R^d_+}$. Concerning the function $\theta$ we assume that, for a certain parameter $r\leq 1$,
\begin{equation}
	\label{eq:asstheta1}
	0\leq\theta\leq 1 \,,
	\qquad
	\theta(x) = 0 \ \text{if}\ x_d\leq r \,,
	\qquad
	\theta(x) = 1 \ \text{if}\ x_d\geq 2r \,,
	\qquad
	|\nabla\theta|\lesssim r^{-1} \,,
\end{equation}
as well as, if $\alpha\geq 1$ and $d=1$,
\begin{equation}
  \label{eq:asstheta2}
  |D^2\theta|\lesssim r^{-2} \,.
\end{equation}
To simplify matters, we assume that $\theta$ is only a function of the last coordinate $x_d$ of $x=(x',x_d)$.

\begin{lemma}\label{bdry1}
	Let $0<\alpha<2$. Let $r\leq 1$ and assume that $\theta$ satisfies \eqref{eq:asstheta1} and, if $\alpha\geq 1$ and $d=1$, also \eqref{eq:asstheta2}. Let $\frac{\alpha-1}{2}\leq p<\alpha$, assume that $v$ satisfies \eqref{eq:assv1} and, if $\alpha\geq 1$ and $d=1$, also \eqref{eq:assv2} with some $\beta>\alpha-1$. Then
	$$
		|II(x)| \lesssim \left(r^{p-\alpha}\wedge \frac{r^{p+1}}{x_d^{1+\alpha}}\right) (1+x_d)^{1+\alpha} (1\wedge |x|^{-d-\alpha})
		\qquad\text{for all}\ x\in\R^d_+ \,.
	$$
	In particular
	$$
		\| II \|_{L^2(\R^d_+)} \lesssim r^{p-\alpha+1/2} \,.
	$$
\end{lemma}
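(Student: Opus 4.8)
The plan is to follow exactly the scheme set up for the radial cut-off in Subsection~\ref{sec:cutoffrad}, now with $\zeta=\theta$ and a local length scale $\ell_x$ adapted to the boundary: I would take $\ell_x := r \wedge \tfrac{x_d}2$ when $\alpha\geq 1$ and $d=1$, and simply split the integral defining $II(x)$ at $|y-x| = r$ otherwise, mirroring the two cases ($\alpha<1$ versus $\alpha\geq 1$) treated for $I$. The key structural observation is that $\theta(x)-\theta(y)$ vanishes unless at least one of $x_d, y_d$ lies in the transition zone $[r,2r]$, and $|\theta(x)-\theta(y)|\lesssim r^{-1}|x_d-y_d|\wedge 1$. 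I would distinguish three regions according to $x_d$: (i) $x_d\leq 3r$ (equivalently, $x$ is near the boundary slab), where I bound $|II(x)|\lesssim \int \one_{y_d\lesssim r} |x-y|^{-d-\alpha}|v(y)|\,dy + \text{(local gradient term)}$ and use $|v(y)|\leq (1\wedge|y|^{-d-\alpha}) y_d^p$ with $y_d\lesssim r\leq 1$ to extract a factor $r^{p-\alpha}$ (the $y'$-integration over $\R^{d-1}$ costs $r^{-\alpha}$ after replacing the region $|y-x|>\ell_x$, and integrating $y_d^p$ over $[0,Cr]$ costs $r^{p+1}$, so altogether $r^{p-\alpha}$; here $p>\alpha-1$, i.e. $p-\alpha>-1$, guarantees convergence near the boundary, and $p<\alpha$ is used later); and (ii) $x_d\geq 3r$, where $\theta(x)=1$ and $II(x) = -\int_{y_d\leq 2r}\theta(y)\,|x-y|^{-d-\alpha}v(y)\,dy + \int_{r\leq y_d\leq 2r}(\ldots)$, so $|x-y|\gtrsim x_d$ on the relevant set and one gets $|II(x)|\lesssim x_d^{-d-\alpha}\int_{y_d\leq 2r}|v(y)|\,dy \lesssim x_d^{-1-\alpha} r^{p+1}(1\wedge|x|^{-d-\alpha})(1+x_d)^{d+\alpha}$ after accounting for the decay $|v(y)|\leq (1\wedge|y|^{-d-\alpha})y_d^p$ and the fact that $|y|\geq |x|/2$ when $|x|$ is large. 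In region (i) the same crude bound also gives the second alternative $r^{p+1}/x_d^{1+\alpha}$ when $x_d\sim r$, so the two alternatives are consistent at the overlap. Taking the minimum of the two bounds and inserting the global decay factor $(1\wedge|x|^{-d-\alpha})$ (with the harmless blow-up factor $(1+x_d)^{1+\alpha}$ absorbing the discrepancy between $|x|$ and $x_d$ when $x_d$ is large) yields the claimed pointwise estimate.

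For the case $\alpha\geq 1$ and $d=1$ I would additionally invoke the decomposition \eqref{eq:commutator1lalpha} with $\zeta=\theta$: the first term is controlled by \eqref{eq:commutator1lalpha1} using $[\theta]_{C^1(B_{\ell_x}(x))}\lesssim r^{-1}\one_{x_d\sim r}$ and $[v]_{C^\beta}\leq (1\wedge|x|^{-d-\alpha})(1\wedge x_d)^{p-\beta}$, giving a bound $\one_{x_d\sim r}(1\wedge|x|^{-d-\alpha}) r^{p-\beta} r^{-1} r^{\beta-\alpha+1} = \one_{x_d\sim r}(1\wedge|x|^{-d-\alpha}) r^{p-\alpha}$, which is subdominant; the second term is controlled by \eqref{eq:commutator1lalpha2} with $[\theta]_{C^2}\lesssim r^{-2}\one_{x_d\sim r}$, giving $|v(x)| r^{-2} r^{2-\alpha} \lesssim r^{p-\alpha}\one_{x_d\sim r}$, also subdominant; and the tail $|y-x|>\ell_x$ is estimated by the region analysis above. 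Finally, the $L^2$-bound follows by integrating the pointwise estimate: near the boundary slab $\{x_d\leq 3r\}$ one integrates $r^{2(p-\alpha)}$ over a slab of thickness $\sim r$ (and over $x'$ with the $(1\wedge|x|^{-d-\alpha})^2$ decay giving a finite $x'$-integral), contributing $\sim r^{2(p-\alpha)+1} = r^{2(p-\alpha+1/2)}$; for $x_d\geq 3r$ one integrates $(r^{p+1}/x_d^{1+\alpha})^2 (1\wedge|x|^{-d-\alpha})^2 (1+x_d)^{2(1+\alpha)}$, and the $x_d$-integral $\int_{3r}^\infty x_d^{-2-2\alpha}\,dx_d \sim r^{-1-2\alpha}$ produces $r^{2(p+1)} r^{-1-2\alpha} = r^{2(p-\alpha)+1}$ again (the large-$x_d$ part converges because of the $|x|^{-d-\alpha}$ decay). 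Hence $\|II\|_{L^2}\lesssim r^{p-\alpha+1/2}$.

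The main obstacle I anticipate is the bookkeeping in region (i) when $p<0$ (relevant only for $\lambda\in[\lambda_*,0)$): there the factor $(1\wedge y_d)^p$ is singular as $y_d\to 0$, so one must check that $\int_0^{Cr} y_d^p\,dy_d$ still converges, which is exactly where the hypothesis $p\geq\tfrac{\alpha-1}2 > -1$ (hence $p-\alpha>-1$ as well after the $y'$-integration) is essential; one also has to be careful that the $y'$-integration producing the $r^{-\alpha}$ factor is legitimate, i.e. that one may enlarge $\{r/4<|y|\leq 8r\}$-type regions to half-spaces without losing the estimate, exactly as in the proof of Lemma~\ref{radial1}. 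A secondary technical point is handling the overlap $x_d\sim r$ cleanly so that the two alternatives in the minimum match up; I expect this to be routine but slightly tedious. Everything else is a direct transcription of the radial-cut-off argument with the roles of ``large $|x|$'' and ``small $x_d$'' exchanged.
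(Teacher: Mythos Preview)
Your plan follows the same architecture as the paper---split by the size of $x_d$, use the gradient bound $|\theta(x)-\theta(y)|\lesssim r^{-1}|x_d-y_d|$ in the transition zone, and for $d=1,\alpha\geq1$ invoke the H\"older decomposition \eqref{eq:commutator1lalpha}---and the local pieces you compute for that last case are correct. However, two steps in your sketch do not work as written.

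First, your region (i) bound is confused about which $y_d$-range contributes. When $x_d<r$ one has $\theta(x)=0$, so $II(x)$ is an integral over $\{y_d>r\}$, not over $\{y_d\lesssim r\}$; the factor $r^{p-\alpha}$ then arises from $\int_r^1 y_d^{p-1-\alpha}\,dy_d$ (after the $y'$-integration), and this is exactly where the hypothesis $p<\alpha$ is used, not from $\int_0^{Cr}y_d^p\,dy_d$. The paper keeps the three sub-ranges of $x_d$ (below $r/2$, in the transition slab, above $4r$) separate and obtains three corresponding integrals in \eqref{eq:bdry1}; your description collapses these in a way that loses the dominant term.

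Second, and more seriously, you are missing the mechanism that produces the decay factor $(1\wedge|x|^{-d-\alpha})$. In region (ii) your intermediate bound $x_d^{-d-\alpha}\int_{y_d\leq2r}|v|\,dy\sim x_d^{-d-\alpha}r^{p+1}$ carries no decay in $|x'|$ whatsoever, and the justification ``$|y|\geq|x|/2$ when $|x|$ is large'' is simply false for those $y$ with $|y'|$ small. What is actually needed is to keep the weight $(1\wedge|y'|^{-d-\alpha})$ inside the integral and perform a convolution-type estimate in the $y'$-variable: the paper does this via Lemma~\ref{integral}, which controls $\int_{\R^{d-1}}(|x'-y'|^{d+\alpha}+x_d^{d+\alpha})^{-1}(1+|y'|^{d+\alpha})^{-1}\,dy'$ and is precisely what converts the naive $x_d^{-d-\alpha}$ into $x_d^{-1-\alpha}$ times the correct $|x|$-decay. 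The same lemma is needed for the region-(i) terms. Without this ingredient the pointwise bound does not close for $d\geq2$.
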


\begin{proof}[Proof of Lemma~\ref{bdry1}. Case $d\geq 2$ or $d\geq 1$ and $\alpha<1$]
	\emph{Step 1.} We claim that
	\begin{align}
		\label{eq:bdry1}
		|II(x)| & \lesssim \one_{x_d\leq 4r} \int_{y_d>r} \frac{1}{|x'-y'|^{d+\alpha}+y_d^{d+\alpha}} |v(y)| \,dy \\
		& \quad + \one_{x_d>\tfrac r2} \int_{y_d\leq 2r} \frac{1}{|x'-y'|^{d+\alpha}+x_d^{d+\alpha}} |v(y)| \,dy \notag \\
		& \quad + \one_{\tfrac r2<x_d\leq 4r} \frac 1r \int_{\tfrac r4<y_d \leq 8r} \frac{|x_d-y_d|}{|x'-y'|^{d+\alpha}+|x_d-y_d|^{d+\alpha}} |v(y)| \,dy \notag \,.
	\end{align}
	
	To prove \eqref{eq:bdry1} we argue in the same way as we did for \eqref{eq:radial1}. We 
	note that, if $x_d\leq\tfrac r2$, then
	$$
	|II(x)| = \left| \int_{y_d>r} \ldots \right| \lesssim \int_{y_d>r} \frac{1}{|x'-y'|^{d+\alpha}+y_d^{d+\alpha}} |v(y)| \,dy \,.
	$$
	If $x_d> 4r$, then
	$$
	|II(x)| = \left| \int_{y_d\leq 2r} \ldots \right| \lesssim \int_{y_d \leq 2r} \frac{1}{|x'-y'|^{d+\alpha}+x_d^{d+\alpha}}|v(y)| \,dy \,.
	$$
	Finally, if $\tfrac r2<x_d \leq 4r$, then
	\begin{align*}
		|II(x)| & \lesssim \int_{y_d>8r} \frac{1}{|x'-y'|^{d+\alpha}+y_d^{d+\alpha}} |v(y)| \,dy + \int_{y_d \leq \tfrac r4} \frac{1}{|x'-y'|^{d+\alpha}+x_d^{d+\alpha}} |v(y)| \,dy \\
		& \quad + \frac1r \int_{\tfrac r4<y_d \leq 8r} \frac{|x_d-y_d|}{|x'-y'|^{d+\alpha}+|x_d-y_d|^{d+\alpha}} |v(y)| \,dy \,,
	\end{align*}
	where we used $| \theta(x) - \theta(y)|\lesssim r^{-1}|x_d-y_d|$, which follows from the gradient bound on $\theta$ and the fact that it only depends on the last coordinate. Combining the above bounds, we obtain \eqref{eq:bdry1}.
	
	\medskip
	
	\emph{Step 2.} We now insert the bounds on $v$ into the right side of \eqref{eq:bdry1}. In the two integrals with an upper bound on $y_d$ we use $1\wedge |y|^{-d-\alpha}\sim 1\wedge |y'|^{-d-\alpha}$. This allows us to compute the $y_d$-integral in the second integral. In this way, we obtain
	\begin{align}\label{eq:bdry1a}
		|II(x)| & \lesssim \one_{x_d\leq 4r} \int_{y_d>r} \frac{1}{|x'-y'|^{d+\alpha}+y_d^{d+\alpha}}(1\wedge |y|^{-d-\alpha})(1\wedge y_d)^p \,dy \notag \\
		& \quad + \one_{x_d>\tfrac r2} r^{p+1} \int_{\R^{d-1}} \frac{1}{|x'-y'|^{d+\alpha}+x_d^{d+\alpha}} (1\wedge |y'|^{-d-\alpha}) \,dy' \notag \\
		& \quad + \one_{\tfrac r2<x_d \leq 4r} r^{p-1} \int_{\tfrac r4<y_d \leq 8r} \frac{|x_d-y_d|}{|x'-y'|^{d+\alpha}+|x_d-y_d|^{d+\alpha}} (1\wedge |y'|^{-d-\alpha}) \,dy \,.
	\end{align}
	A straightforward computation shows that, if $\frac r2<x_d\leq 4r$, then
	\begin{equation}
		\label{eq:integralxd}
		\int_{\tfrac r4<y_d \leq 8r} \frac{|x_d-y_d|}{|x'-y'|^{d+\alpha}+|x_d-y_d|^{d+\alpha}} \,dy_d \sim \frac{r^2}{|x'-y'|^{d+\alpha}+x_d^{d+\alpha}} \,.
	\end{equation}
	(Indeed, we substitute $y_d=x_d+|x'-y'| t$ and note that the upper and lower bounds in the $t$ integral are of order $r$.) We note that \eqref{eq:integralxd} requires the assumption $\alpha<1$ if $d=1$ (with the convention that terms involving $x'$ or $y'$ are absent).
	
	If we substitute \eqref{eq:integralxd} into \eqref{eq:bdry1a}, we see that the third term on the right side of \eqref{eq:bdry1a} is bounded from above by a constant times the second term and can therefore be dropped.
	
	We now perform the $y'$ integral in the first and second integrals in \eqref{eq:bdry1a} using Lemma \ref{integral} below. (We note that $1\wedge |y'|^{-d-\alpha} \sim (1+|y'|^{d+\alpha})^{-1}$ and $1\wedge |y|^{-d-\alpha} \sim ((1+y_d)^{d+\alpha} + |y'|^{d+\alpha})^{-1}$.) In this way, we obtain
	\begin{align*}
		|II(x)| & \lesssim \one_{x_d\leq 4r} \int_{y_d>r}
		\frac{y_d^{-1-\alpha}}{(1+y_d)^{d+\alpha}+|x'|^{d+\alpha}}(1\wedge y_d)^p \,dy_d + \one_{x_d>\tfrac r2} r^{p+1} \frac{x_d^{-1-\alpha}(1+x_d)^{1+\alpha}}{(1+x_d)^{d+\alpha} + |x'|^{d+\alpha}} \,.
	\end{align*}
	Finally, we compute
	$$
	\int_{y_d>r}
	\frac{y_d^{-1-\alpha}}{(1+y_d)^{d+\alpha}+|x'|^{d+\alpha}}(1\wedge y_d)^p \,dy_d
	\sim \frac{r^{p-\alpha}}{1+|x'|^{d+\alpha}} \,.
	$$
	The dominant contribution comes from the integral over $[r,1]$ and we used $p<\alpha$. 
	
	This yields the claimed pointwise bound. (Note that for $x_d\leq 4r$ we have $(1+|x'|^{d+\alpha})^{-1} \sim 1 \wedge |x|^{-d-\alpha}$.) The $L^2$-bound follows by simple integration. The dominant contribution comes from the $x_d$-integral over $[0,1]$.
\end{proof}

\begin{lemma}\label{integral}
	Let $N\geq 1$. Then for all $\beta>0$ and all $a,b\in\R^N$, $r,s>0$,
	$$
	\int_{\R^N} \frac{(rs)^\beta\,dx}{(r^{N+\beta}+|x-a|^{N+\beta})(s^{N+\beta}+|x-b|^{N+\beta})} \lesssim \frac{(r+s)^\beta}{(r+s)^{N+\beta} + |a-b|^{N+\beta}} \,.
	$$
\end{lemma}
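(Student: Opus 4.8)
The plan is to recognize the integral as a convolution of two radial ``heat-kernel-type'' profiles and to reduce it to a single scaling-invariant estimate. Set $g_\rho(x):=(\rho^{N+\beta}+|x|^{N+\beta})^{-1}$; since $\beta>0$ this lies in $L^1(\R^N)$ with $\|g_\rho\|_1=c_{N,\beta}\rho^{-\beta}$ by scaling, and $\|g_\rho\|_\infty=\rho^{-N-\beta}$. The left-hand side of the lemma is $(rs)^\beta(g_r*g_s)(a-b)$ and the right-hand side is $(r+s)^\beta g_{r+s}(a-b)$. Both sides are homogeneous of degree $-N$ under the simultaneous scaling $(r,s,a,b)\mapsto(tr,ts,ta,tb)$, so I would first normalize $r+s=1$; then, using the symmetry $(r,a)\leftrightarrow(s,b)$, assume $r\le s$, so that $s\in[\tfrac12,1]$, and on that range a one-line comparison gives $g_1\le g_s\le 2^{N+\beta}g_1$ pointwise, hence $(g_r*g_s)(a-b)\sim(g_r*g_1)(a-b)$. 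After these reductions the whole lemma is equivalent to the single convolution estimate $(g_r*g_1)(z)\lesssim r^{-\beta}g_1(z)$ valid for all $z\in\R^N$ and all $r\le\tfrac12$, because it then follows that the left side is $\lesssim (rs)^\beta r^{-\beta}g_1(a-b)=s^\beta g_1(a-b)\le g_1(a-b)$, which is exactly the right side when $r+s=1$.

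To establish $(g_r*g_1)(z)\lesssim r^{-\beta}g_1(z)$ I would argue by cases on $|z|$. For $|z|\lesssim 1$ the crude bound $(g_r*g_1)(z)\le\|g_r\|_1\|g_1\|_\infty=c_{N,\beta}r^{-\beta}$ already suffices, since $g_1(z)\sim 1$ there. For $|z|\gg 1$ I would split the convolution integral $\int g_r(x)g_1(z-x)\,dx$ according to whether $|z-x|\le|z|/2$ or $|z-x|>|z|/2$. On the first piece $|x|\ge|z|/2$ is large (in particular $\gtrsim r$), so $g_r(x)\lesssim|z|^{-N-\beta}$, and integrating $g_1$ over all of $\R^N$ gives a contribution $\lesssim|z|^{-N-\beta}\|g_1\|_1\lesssim r^{-\beta}|z|^{-N-\beta}$, using $r\le 1$ so that $r^{-\beta}\ge 1$ absorbs the constant. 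On the second piece $g_1(z-x)\lesssim|z|^{-N-\beta}$, and integrating $g_r$ freely gives $\lesssim|z|^{-N-\beta}\|g_r\|_1=c_{N,\beta}r^{-\beta}|z|^{-N-\beta}$. Since $g_1(z)\sim|z|^{-N-\beta}$ for $|z|\gg 1$, both pieces are $\lesssim r^{-\beta}g_1(z)$, as required.

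The argument is essentially routine once set up this way, so there is no serious obstacle; the one point that must be gotten right is that the dyadic split in the second case has to be made with respect to $|z-x|$ versus $|z|$, \emph{not} $|x|$ versus $|z|$. Splitting on $|x|$ fails on the far region $\{|x|>|z|/2\}$, because bounding $g_1(z-x)$ there by its supremum loses the decay in $|z|$. Splitting on $|z-x|$ instead guarantees that in each half one of the two factors already carries the full $|z|^{-N-\beta}$ decay while the other factor is integrated over all of $\R^N$, which is precisely the mechanism producing the $\rho^{-\beta}=\|g_\rho\|_1$ gain. One should also record that the finiteness of $\|g_\rho\|_1$, hence the whole estimate, uses $\beta>0$.
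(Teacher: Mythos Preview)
Your proof is correct and follows essentially the same approach as the paper: normalize by symmetry and scaling, then in the large-separation regime split the convolution so that in each piece one factor carries the full $|z|^{-N-\beta}$ decay while the other is integrated over $\R^N$. The paper normalizes to $s=1$ rather than $r+s=1$ and uses a three-region split on $|x|$ versus $|a|$ instead of your two-region split on $|z-x|$, but these are cosmetic differences; your reduction via $g_s\sim g_1$ for $s\in[\tfrac12,1]$ is a clean way to collapse the bookkeeping.
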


\begin{proof}[Proof of Lemma \ref{integral}]
  By symmetry we may assume that $r\leq s$. By translation and dilation, we may and will assume $b=0$ and $s=1$. Thus, it suffices to show
  \begin{align}
    \label{eq:integralaux}
    \int_{\R^N} \frac{r^\beta\, dx}{(r^{N+\beta}+|x-a|^{N+\beta})(1+|x|^{N+\beta})}
    \lesssim \frac{(1+r)^\beta}{(1+r)^{N+\beta} + |a|^{N+\beta}} \,.
  \end{align}
  We distinguish between $|a|\lessgtr(1+r)$.

  \emph{Case $|a|\leq(1+r)$.}
  The right side of \eqref{eq:integralaux} is comparable to $\one_{r<1}+ r^{-N}\one_{r\geq1}$.
  When $r<1$, we bound the left side of \eqref{eq:integralaux} from above by
  \begin{align*}
    r^\beta\int_{\R^N}\frac{dx}{r^{N+\beta}+|x-a|^{N+\beta}}
    = \int_{\R^N}\frac{dx}{1+|x|^{N+\beta}} \sim 1.
  \end{align*}
  When $r\geq1$, we bound the left side of \eqref{eq:integralaux} from above by
  \begin{align*}
    r^\beta\int_{\R^N}\frac{dx}{r^{N+\beta}(1+|x|^{N+\beta})} \sim r^{-N}.
  \end{align*}

  \medskip
  \emph{Case $|a|\geq(1+r)$.}
  The right side of \eqref{eq:integralaux} is comparable to
  $(1+r)^\beta\,|a|^{-N-\beta}$.
  We bound the left side of \eqref{eq:integralaux} from above by
  \begin{align*}
    & \int_{\R^N}\frac{r^\beta}{(r^{N+\beta}+|x-a|^{N+\beta})(1+|x|^{N+\beta})}\left(\one_{|x|<\frac{|a|}{2}} + \one_{\frac{|a|}{2}\leq|x|\leq2|a|} + \one_{|x|\geq2|a|}\right)\,dx \\
    & \quad \lesssim r^\beta\int_{\R^N}\frac{dx}{|a|^{N+\beta}(1+|x|^{N+\beta})} + \int_{\R^N}\frac{r^\beta\one_{|x|\in[|a|/2,2|a|]}}{(r^{N+\beta}+|x-a|^{N+\beta})|x|^{-N-\beta}}\,dx \\
    & \qquad + r^\beta\int_{\R^N}\frac{dx}{|x|^{2N+2\beta}}\one_{|x|\geq2|a|} \\
    & \quad \lesssim r^\beta|a|^{-N-\beta} + |a|^{-N-\beta} r^\beta \int_{\R^N}\frac{dx}{r^{N+\beta}+|x|^{N+\beta}} + r^\beta|a|^{-N-2\beta}
      \lesssim \frac{1+r^\beta}{|a|^{N+\beta}}.
  \end{align*}
  This concludes the proof.
\end{proof}

\begin{proof}[Proof of Lemma~\ref{bdry1}. Case $d=1$ and $\alpha\geq 1$.]
	In fact, we will argue for general $d\geq 1$, assuming \eqref{eq:assv2} and \eqref{eq:asstheta2}. We argue as in explained in the previous section around \eqref{eq:commutator1lalpha}, choosing $\zeta=\theta$.
	
	For the first term in \eqref{eq:commutator1lalpha} we use the bound \eqref{eq:commutator1lalpha1} and note that $[\theta]_{C^1(B_{\ell_x}(x))}$ vanishes unless $x_d\sim r$, in which case it is of order $r^{-1}$. This leads to a bound
	$$
	\one_{x_d\sim r} (1\wedge |x|^{-d-\alpha}) (1\wedge x_d)^{p-\alpha+1} r^{-1} \,. 
	$$
	Similarly, for the second term in \eqref{eq:commutator1lalpha} we use the bound \eqref{eq:commutator1lalpha2} and obtain
	$$
	\one_{x_d\sim r} (1\wedge |x|^{-d-\alpha}) (1\wedge x_d)^{p-\alpha+2} r^{-2} \,.
	$$
	Since $1\wedge x_d \sim r$ for $x_d\sim r$, the two bounds are of the same order.
	
	We now turn to the third term in \eqref{eq:commutator1lalpha}, which we denote by $\widetilde{II}(x)$. We claim that
	\begin{align}\label{eq:bdry2}
		\left| \widetilde{II}(x) \right| & \lesssim \one_{x_d\leq 4r} \int_{y_d>r} \frac{1}{|x'-y'|^{d+\alpha}+y_d^{d+\alpha}} |v(y)| \,dy \notag \\
		& \quad + \one_{x_d>\tfrac r2} \int_{y_d\leq 2r} \frac{1}{|x'-y'|^{d+\alpha}+x_d^{d+\alpha}} |v(y)|\,dy \notag \\
		& \quad + \one_{\tfrac r2<x_d \leq 4r} \frac1r \int_{\tfrac r4<y_d\leq 8r} \frac{|x_d-y_d|}{|x'-y'|^{d+\alpha}+|x_d-y_d|^{d+\alpha}} |v(y)|\,dy \,.
	\end{align}
	This is proved in the exact same way as \eqref{eq:bdry1}.
	
	We now insert the bounds on $v$ into the right side of \eqref{eq:bdry2}. The first two terms are bounded as in the proof of Lemma \ref{bdry1}. The bound for the third term, however, is different now. Noting that $1\wedge|y|^{-d-\alpha} \sim (1+ |y'|^{d+ \alpha})^{-1}$ on the domain of integration, we arrive on the following upper bound on the third term
	$$
	\one_{\tfrac r2<x_d \leq 4r} r^{p-1} \int_{\tfrac r4<y_d\leq 8r} \frac{|x_d-y_d|}{|x'-y'|^{d+\alpha}+|x_d-y_d|^{d+\alpha}} \frac{1}{|y'|^{d+\alpha}+1} \,dy \,.
	$$
	Computing the $y_d$-integral using \eqref{eq:integralxd} and the the $y'$-integral using Lemma \ref{integral}, we can bound
	\begin{align*}
		& \lesssim \one_{\tfrac r2<x_d \leq 4r} r^{p+1} \int_{\R^{d-1}} \frac{1}{|x'-y'|^{d+\alpha}+x_d^{d+\alpha}} \frac{1}{|y'|^{d+\alpha}+1} \,dy' \\ 
		& \lesssim \one_{\tfrac r2<x_d \leq 4r} \frac{r^{p-\alpha}}{1+|x'|^{d+\alpha}} \,.
	\end{align*}
	Combining all these bounds we obtain the claimed pointwise bound. The $L^2$-bound follows as before.
\end{proof}


\subsection{Combined cut-off}\label{sec:cutoffcomb}

We now combine Lemmas \ref{radial1} and \ref{bdry1}.

\begin{corollary}\label{cutoffcomb}
	Let $0<\alpha<2$. Let $0<r\leq 1\leq R<\infty$, assume that $\chi$ and $\theta$ satisfy \eqref{eq:asschi1} and \eqref{eq:asstheta1} and, if $\alpha\geq 1$, also \eqref{eq:asschi2} and \eqref{eq:asstheta2}. Let $\frac{\alpha-1}{2}\leq p<\alpha$, assume that $v$ satisfies \eqref{eq:assv1} and, if $\alpha\geq 1$, also \eqref{eq:assv2} with some $\beta>\alpha-1$. Then
	$$
	\| [(-\Delta)^{\alpha/2},\chi\theta] v \|_{L^2(\R^d_+)} \lesssim r^{p-\alpha+1/2} + R^{-\alpha-d/2} \,.
	$$
\end{corollary}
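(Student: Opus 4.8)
The plan is to write the cut-off $\chi\theta$ as a sum of two pieces that separate the radial and boundary transition zones, apply Lemmas~\ref{radial1} and~\ref{bdry1} to each piece, and combine the resulting $L^2$-bounds. Concretely, I would use the algebraic identity
\begin{equation*}
	\chi(x)\theta(x) - \chi(y)\theta(y) = \theta(x)\bigl(\chi(x)-\chi(y)\bigr) + \chi(y)\bigl(\theta(x)-\theta(y)\bigr) \,,
\end{equation*}
so that, recalling the formula
\begin{equation*}
	[(-\Delta)^{\alpha/2},\zeta]v(x) = \mathcal A(d,-\alpha)\int_{\R^d} \frac{\zeta(x)-\zeta(y)}{|x-y|^{d+\alpha}} v(y)\,dy \,,
\end{equation*}
we obtain
\begin{equation*}
	[(-\Delta)^{\alpha/2},\chi\theta]v(x) = \mathcal A(d,-\alpha)\,\theta(x)\, I(x) + \mathcal A(d,-\alpha) \int_{\R^d} \frac{\theta(x)-\theta(y)}{|x-y|^{d+\alpha}}\,\chi(y)v(y)\,dy \,,
\end{equation*}
where $I$ is exactly the quantity bounded in Lemma~\ref{radial1}. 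For the first term, since $0\leq\theta\leq 1$, its $L^2(\R^d_+)$-norm is $\lesssim \|I\|_{L^2(\R^d_+)} \lesssim R^{-\alpha-d/2}$ by part~(c) of Lemma~\ref{radial1}. For the second term, I would observe that $\chi v$ still satisfies the hypotheses \eqref{eq:assv1} and (for $\alpha\geq 1$) \eqref{eq:assv2} of Lemma~\ref{bdry1}: indeed $0\leq\chi\leq 1$ gives $|\chi v|\leq |v|$ pointwise, and on each ball $B_{\ell_x}(x)$ the product rule together with $\ell_x\leq 1$ and $[\chi]_{C^1}\lesssim R^{-1}\leq 1$ gives $[\chi v]_{C^\beta(B_{\ell_x}(x))} \lesssim [v]_{C^\beta(B_{\ell_x}(x))} + \|v\|_{L^\infty(B_{\ell_x}(x))}\lesssim (1\wedge|x|^{-d-\alpha})(1\wedge x_d)^{p-\beta}$ (here one uses $\beta\leq 1$ when checking the lower-order term, or the analogous product estimate for $1<\beta\leq 2$, and $p-\beta\leq p$). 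Hence Lemma~\ref{bdry1} applied to $\chi v$ bounds the second term by $\lesssim r^{p-\alpha+1/2}$.

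Adding the two contributions gives $\|[(-\Delta)^{\alpha/2},\chi\theta]v\|_{L^2(\R^d_+)}\lesssim r^{p-\alpha+1/2} + R^{-\alpha-d/2}$, as claimed. Along the way I should note that $v$ is supported in $\overline{\R^d_+}$ and so is $\chi v$, which is the standing hypothesis of Lemma~\ref{bdry1}, and that the principal-value interpretation of the commutator integral is justified by the absolutely convergent bounds established in Lemmas~\ref{radial1} and~\ref{bdry1}; the splitting identity is valid pointwise inside the principal value because each of the two resulting integrals converges (as a principal value) on its own by those lemmas.

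The only genuinely nontrivial point is the verification that $\chi v$ inherits the Hölder bound \eqref{eq:assv2} with the \emph{same} right-hand side, i.e.\ that multiplying by $\chi$ does not spoil the local regularity scale. This is where one must be slightly careful: the extra term from differentiating $\chi$ carries a factor $[\chi]_{C^1}\lesssim R^{-1}$ (or $[\chi]_{C^2}\lesssim R^{-2}$ for $1<\beta\leq 2$), and since $R\geq 1$ this is harmless, but one also needs $(1\wedge x_d)^{p}\leq (1\wedge x_d)^{p-\beta}$, which holds because $1\wedge x_d\leq 1$ and $\beta>0$. Everything else is a direct quotation of the two preceding lemmas, so I expect no real obstacle beyond bookkeeping.
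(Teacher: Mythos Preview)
Your proof is correct and follows essentially the same approach as the paper: the same algebraic splitting of $\chi\theta$, application of Lemma~\ref{radial1} to $\theta(x)I(x)$ after dropping the bounded factor $\theta$, and application of Lemma~\ref{bdry1} to the second piece after checking that $\chi v$ inherits \eqref{eq:assv1} and \eqref{eq:assv2}. The paper spells out the product-rule estimate for $[\chi v]_{C^\beta}$ a bit more explicitly in the two cases $\beta\le 1$ and $\beta>1$, but your sketch captures the same content.
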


\begin{proof}
	We decompose
	$$
	\tfrac1{\mathcal A(d,-\alpha)} [(-\Delta)^{\alpha/2},\chi\theta] v(x) = \theta(x) \int_{\R^d}  \frac{\chi(x)-\chi(y)}{|x-y|^{d+\alpha}} v(y)\,dy + \int_{\R^d} \frac{\theta(x)-\theta(y)}{|x-y|^{d+\alpha}} \chi(y)v(y)\,dy
	$$
	and bound the $L^2$-norms of the two terms on the right side separately. For the first term we can drop the term $\theta(x)\in[0,1]$ and apply Lemma \ref{radial1}. For the second term we apply Lemma \ref{bdry1}, noting that the product $\chi v$ satisfies its assumptions. This is clear for $\alpha<1$. For $\alpha\geq1$ we use
	\begin{align*}
          & [\chi v]_{C^\beta(B_r(a))} \\
          & \quad \leq 
            \begin{cases}
              \|\chi\|_{L^\infty(B_r(a))} [v]_{C^\beta(B_r(a))} + \|v\|_{L^\infty(B_r(a))} [\chi]_{C^\beta(B_r(a))} & \text{if}\ \beta\leq 1 \,,\\
              \|\chi\|_{L^\infty(B_r(a))} [v]_{C^\beta(B_r(a))} +\|\nabla v\|_{L^\infty(B_r(a))} [\chi]_{C^{\beta-1}(B_r(a))} & \\
              + \|\nabla\chi\|_{L^\infty(B_r(a))} [v]_{C^{\beta-1}(B_r(a))}
              + \|v\|_{L^\infty(B_r(a))} [\chi]_{C^{\beta}(B_r(a))} & \text{if}\ \beta> 1 \,.
            \end{cases}
	\end{align*}
	All factors involving $\chi$ on the right side are $\lesssim 1$ by \eqref{eq:asschi1}, \eqref{eq:asschi2} and $R\geq 1$. Moreover, we note that if $v$ satisfies \eqref{eq:assv1} and \eqref{eq:assv2} for some $\beta=\beta_0>0$, then it satisfies \eqref{eq:assv2} for any $0<\beta<\beta_0$. We conclude that $\chi v$ satisfies \eqref{eq:assv2} with the same $\beta$ as $v$ does.
\end{proof}


\section{Density of $C_c^\infty(\R^d_+)$}\label{sec:density}

Our goal in this section is to prove the following theorem. It will be the main ingredient to prove the operator core property stated in Theorem \ref{equivalencesobolev}.

\begin{theorem}\label{density}
	Let $\alpha\in(0,2]$ and let $\lambda\geq 0$ when $\alpha<2$ and $\lambda\geq-1/4$ when $\alpha=2$. Let $p$ be defined by \eqref{eq:defp}, and let $s\in(0,2]$. Assume that $s<(1+2p)/\alpha$. Then for any $f\in L^2(\R^d_+)$ there is a sequence $(\phi_n)\subset C^\infty_c(\R^d_+)$ such that
	$$
	L_\lambda^{s/2}\phi_n\to f
	\ \text{in}\ L^2(\R^d_+) \,.
	$$
	If, in addition $f\in\dom L_\lambda^{-s/2}$, then the sequence can be chosen such that, in addition,
	$$
	\phi_n\to L_\lambda^{-s/2}f
	\ \text{in}\ L^2(\R^d_+) \,.
	$$
\end{theorem}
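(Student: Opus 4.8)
The strategy is a cut-off and smoothing argument. Start with an arbitrary $f\in L^2(\R^d_+)$ and first reduce to $f$ in a convenient dense class. Since $s<(1+2p)/\alpha$, the generalized Hardy inequality (Theorem \ref{genhardybdd}) shows that $L_\lambda^{-s/2}$ maps $L^2(\R^d_+)$ boundedly into $L^2(\R^d_+; x_d^{-\alpha s}\,dx)$, so it is enough to produce, for any $f$ in a dense subspace of $L^2(\R^d_+)$, a sequence $(\phi_n)\subset C_c^\infty(\R^d_+)$ with $L_\lambda^{s/2}\phi_n\to f$ in $L^2$; the second conclusion, convergence $\phi_n\to L_\lambda^{-s/2}f$ when $f\in\dom L_\lambda^{-s/2}$, will come out of the same construction because the approximation of $g:=L_\lambda^{-s/2}f$ will be in the $x_d^{-\alpha s/2}$-weighted norm, which controls the $L^2$-norm. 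A natural dense class is $\bigcup_{t>0}\ran(e^{-tL_\lambda})$, equivalently functions of the form $f=e^{-tL_\lambda}h$ with $h\in L^2\cap L^\infty$ of compact support; for such $f$ one has $g=L_\lambda^{-s/2}e^{-tL_\lambda}h = e^{-tL_\lambda}(L_\lambda^{-s/2}h)$, and by the heat kernel bounds of Theorem \ref{heatkernel} (resp.\ Theorem \ref{localheatkernelhardy}) together with the Riesz kernel bounds of Theorem \ref{riesz} the function $v:=g$ satisfies the pointwise decay and interior H\"older bounds \eqref{eq:assv1}--\eqref{eq:assv2} with this $p$, after an overall rescaling. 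This verification — that a single heat-semigroup smoothing of an $L^2_{\rm comp}$ function satisfies \eqref{eq:assv1}--\eqref{eq:assv2} — is where one must combine the heat kernel bounds with Schauder estimates on balls of radius $\ell_x=1\wedge x_d/2$, as announced in the introduction.

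Given such a $g=v$, set $\phi_n := \chi_{R_n}\,\theta_{r_n}\, v$ where $\chi_R$ and $\theta_r$ are cut-offs satisfying \eqref{eq:asschi1}--\eqref{eq:asschi2} and \eqref{eq:asstheta1}--\eqref{eq:asstheta2}, with $R_n\to\infty$ and $r_n\to 0$; then $\phi_n\in C_c^\infty(\R^d_+)$ and $\phi_n\to v$ in $L^2$. It remains to show $L_\lambda^{s/2}\phi_n\to L_\lambda^{s/2}v=f$ in $L^2$. Write, using $L_\lambda = L_0+\lambda x_d^{-\alpha}$ and $L_0=(-\Delta)^{\alpha/2}_{\R^d_+}$, for $\alpha<2$,
\begin{align*}
L_\lambda \phi_n - (L_\lambda v)\, \chi_{R_n}\theta_{r_n}
&= (-\Delta)^{\alpha/2}(\chi_{R_n}\theta_{r_n} v) - \chi_{R_n}\theta_{r_n}\,(-\Delta)^{\alpha/2}v \\
&= [(-\Delta)^{\alpha/2},\chi_{R_n}\theta_{r_n}]\,v \,,
\end{align*}
where $v$ is identified with its extension by zero to $\R^d$ (legitimate because $\theta_{r_n}v$ vanishes near $\partial\R^d_+$, so the regional and full-space fractional Laplacians agree on $\chi_{R_n}\theta_{r_n}v$). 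By Corollary \ref{cutoffcomb} the $L^2$-norm of this commutator is $\lesssim r_n^{p-\alpha+1/2}+R_n^{-\alpha-d/2}$, which tends to $0$ provided $p-\alpha+1/2>0$; here the hypothesis $s<(1+2p)/\alpha$ together with $s>0$ and $p\geq\frac{\alpha-1}{2}$ must be used to see that $p>\alpha-\tfrac12$ is either automatic or can be arranged — more precisely one should check the exponent $p-\alpha+1/2$ against the admissible range of $p$, and if it is not positive one instead inserts an intermediate power $L_\lambda^{s'/2}$, $s'<s$, and iterates, or works directly with the $s/2$ power via the subordination formula $L_\lambda^{s/2}=c_s\int_0^\infty (L_\lambda(L_\lambda+m)^{-1})\,m^{s/2}\frac{dm}{m}$ to reduce the needed commutator bound to the one already proved. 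Combining, $L_\lambda\phi_n = \chi_{R_n}\theta_{r_n}(L_\lambda v) + o(1)_{L^2} \to L_\lambda v$, and then one upgrades from the case $s=2$ to general $s\in(0,2]$ by interpolation / the spectral theorem: since $\phi_n\to v$ and $L_\lambda\phi_n\to L_\lambda v$ in $L^2$, we get $L_\lambda^{s/2}\phi_n\to L_\lambda^{s/2}v$ for every $s\in[0,2]$ by the elementary bound $\|L_\lambda^{s/2}w\|\le\|w\|^{1-s/2}\|L_\lambda w\|^{s/2}$ applied to $w=\phi_n-v$. For $\alpha=2$ the same argument works with $(-\Delta)^{\alpha/2}$ replaced by $-\Delta$ and the commutator $[-\Delta,\chi_R\theta_r]v = -2\nabla(\chi_R\theta_r)\cdot\nabla v - (\Delta(\chi_R\theta_r))v$, whose $L^2$-norm is estimated directly from \eqref{eq:assv1}--\eqref{eq:assv2} and the gradient/Hessian bounds on the cut-offs.

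Finally, for the second assertion: when $f\in\dom L_\lambda^{-s/2}$ we simply take $g=L_\lambda^{-s/2}f$ in the construction above and note that $L_\lambda^{-s/2}f$ need not itself lie in $\ran e^{-tL_\lambda}$, so one first approximates $f$ in $L^2$ by $f_t:=e^{-tL_\lambda}f$, applies the previous paragraph to each $f_t$ to get $\phi_n^{(t)}$ with $L_\lambda^{s/2}\phi_n^{(t)}\to f_t$ and $\phi_n^{(t)}\to L_\lambda^{-s/2}f_t=e^{-tL_\lambda}L_\lambda^{-s/2}f$ in $L^2$, and then runs a diagonal argument in $(t,n)$, using that $f_t\to f$ and $L_\lambda^{-s/2}f_t\to L_\lambda^{-s/2}f$ in $L^2$ as $t\to 0$ by the spectral theorem. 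The genuine obstacle is the first step: verifying that $v=e^{-tL_\lambda}(L_\lambda^{-s/2}h)$ obeys the pointwise and H\"older bounds \eqref{eq:assv1}--\eqref{eq:assv2}. For \eqref{eq:assv1} one differentiates the kernel representations and uses the boundary-factor structure of the heat/Riesz kernels in Theorems \ref{heatkernel}, \ref{localheatkernelhardy}, \ref{riesz}; for \eqref{eq:assv2} one freezes a point $x$, writes $v=e^{-(t/2)L_0}(e^{-(t/2)L_\lambda}L_\lambda^{-s/2}h)$ near $x$ (so that interior elliptic/parabolic regularity of $(-\Delta)^{\alpha/2}$ applies away from $\partial\R^d_+$ on the scale $\ell_x$), and applies interior Schauder estimates for $(-\Delta)^{\alpha/2}$ (resp.\ $-\Delta$) on $B_{\ell_x}(x)$, with the $\ell_x^{-\beta}$ scaling accounting for the $(1\wedge x_d)^{p-\beta}$ factor. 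This is exactly the combination of heat kernel bounds with Schauder theory highlighted in the introduction, and it is the technically heaviest part of the argument.
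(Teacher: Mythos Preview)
Your overall architecture is close to the paper's, but the central step is mishandled. You try to prove first that $L_\lambda\phi_n\to L_\lambda v$ in $L^2$ (which requires the commutator bound $r^{p-\alpha+1/2}\to 0$, i.e.\ $p>\alpha-\tfrac12$) and only \emph{then} interpolate down to $L_\lambda^{s/2}$. The condition $p>\alpha-\tfrac12$ is \emph{not} implied by the hypothesis $s<(1+2p)/\alpha$: for instance when $\alpha=2$, $\lambda=-1/4$ one has $p=1/2<3/2=\alpha-1/2$, and any $s\in(0,1)$ is admissible. Your proposed fixes (inserting an intermediate power $s'<s$, or a subordination formula) do not address this, since the obstruction is independent of $s$. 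The paper's resolution is simply to apply the interpolation inequality to the \emph{rates}, not to the convergences: one has $\|\phi-\psi\|\lesssim r^{p+1/2}$ and $\|L_\lambda(\phi-\psi)\|\lesssim r^{p-\alpha+1/2}$ (the latter possibly \emph{diverging} as $r\to 0$), and the inequality $\|L_\lambda^{s/2}w\|\le\|w\|^{1-s/2}\|L_\lambda w\|^{s/2}$ gives directly
\[
\|L_\lambda^{s/2}(\phi-\psi)\|\lesssim r^{(p+1/2)(1-s/2)+(p-\alpha+1/2)(s/2)}=r^{p+1/2-\alpha s/2},
\]
and \emph{this} exponent is positive precisely when $s<(1+2p)/\alpha$. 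That is exactly where the hypothesis on $s$ enters; no case distinction on $p$ is needed.

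A second, smaller issue: you take $v=L_\lambda^{-s/2}e^{-tL_\lambda}h$ as the function to cut off, but by the Riesz kernel bounds such a $v$ decays only like $|x|^{\alpha s/2-d}$ at infinity, not like $|x|^{-d-\alpha}$, so it does \emph{not} satisfy \eqref{eq:assv1} and Corollary~\ref{cutoffcomb} is not directly applicable. The paper sidesteps this by cutting off $\psi\in e^{-tL_\lambda}C_c^\infty(\R^d_+)$ itself (which \emph{does} obey \eqref{eq:assv1}--\eqref{eq:assv2} via Lemma~\ref{pointwise}) and then, in a separate density step, passing from the special class $L_\lambda^{s/2}e^{-tL_\lambda}C_c^\infty(\R^d_+)$ to general $f$ using the bounded operator $L_\lambda^{-s/2}(e^{-tL_\lambda}-e^{-TL_\lambda})$ and the fact that $(e^{-2tL_\lambda}-e^{-2TL_\lambda})f\to f$ as $t\to 0$, $T\to\infty$. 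This avoids ever having to verify pointwise bounds for $L_\lambda^{-s/2}$ applied to something.
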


\begin{remark}\label{densityrem}
	Let $\alpha\in(0,2)$, $\lambda\in[\lambda_*,0)$ and assume that $e^{-tL_\lambda}(x,y)$ satisfies the upper bound in \eqref{eq:heatkernel} with $p$ defined by \eqref{eq:defp}. Then Theorem \ref{density} remains valid for this value of $\lambda$. This follows by the same arguments as in the proof below, since Lemma \ref{pointwise} remains valid for this value of $\lambda$.
\end{remark}

Our strategy of proof of this theorem uses some ideas of \cite[Lemma 4.4]{Killipetal2016}. The basic strategy is to first prove Theorem \ref{density} for $f$ of a special form, namely, $f\in L_\lambda^{s/2}e^{-tL_\lambda} C_c^\infty(\R^d_+)$ for some $0<t<\infty$. To do this, we will use the following pointwise bounds on functions in $e^{-tL_\lambda}C_c^\infty(\R^d_+)$. For the definition of the H\"older seminorm see~\eqref{eq:holder}.

\begin{lemma}\label{pointwise}
	Let $\alpha$, $\lambda$ and $p$ be as in Theorem \ref{density}. Let $0<t<\infty$ and $\psi\in e^{-tL_\lambda}C_c^\infty(\R^d_+)$. Then, for all $x\in\R^d_+$,
	\begin{align}
		\label{eq:heatbound1}
		|\psi(x)| & \lesssim (1\wedge x_d)^p(1\wedge |x|^{-d-\alpha}) \,, \\
		\label{eq:heatbound2}
		|L_\lambda\psi(x)| & \lesssim (1\wedge x_d)^p (1\wedge |x|^{-d-\alpha}) \,, \\
		\label{eq:heatbound3}
		|(-\Delta)^{\alpha/2} \psi(x)| & \lesssim (1\wedge x_d)^{p-\alpha} (1\wedge |x|^{-d-\alpha}) \,, \\
		\label{eq:heatbound4}
		[\psi]_{C^\beta(B_{\ell_x}(x))} & \lesssim (1\wedge x_d)^{p-\beta}  (1\wedge |x|^{-d-\alpha})
		\quad \text{with}\ \ell_x:=1\wedge\tfrac{x_d}{2} \,,\ 0<\beta<\alpha \,.
	\end{align} 
\end{lemma}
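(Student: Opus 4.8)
The plan is to establish all four bounds starting from the fundamental heat kernel bounds of Theorem \ref{heatkernel} (for $\alpha<2$) and Theorem \ref{localheatkernelhardy} (for $\alpha=2$), together with the semigroup property. Write $\psi = e^{-tL_\lambda}\phi$ with $\phi\in C_c^\infty(\R^d_+)$, so that $\psi(x) = \int_{\R^d_+} e^{-tL_\lambda}(x,y)\phi(y)\,dy$. For \eqref{eq:heatbound1} I would simply insert the upper bound \eqref{eq:heatkernel} (resp.\ \eqref{eq:localheatkernelhardy}), use $|\phi|\le C\one_{\supp\phi}$, and integrate: the factor $(1\wedge x_d/t^{1/\alpha})^p$ contributes $(1\wedge x_d)^p$ up to a $t$-dependent constant (on $\supp\phi$ the variable $y_d$ is bounded below and above so its factor is $\sim 1$), while $t^{-d/\alpha}(1\wedge t^{1/\alpha}/|x-y|)^{d+\alpha}$ integrated against the compactly supported $\phi$ gives the decay $1\wedge |x|^{-d-\alpha}$ since $|x-y|\sim|x|$ for $|x|$ large. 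The exact same computation, applied to $L_\lambda\psi = L_\lambda e^{-tL_\lambda}\phi = e^{-(t/2)L_\lambda}\big(L_\lambda e^{-(t/2)L_\lambda}\phi\big)$, gives \eqref{eq:heatbound2}: one first notes $L_\lambda e^{-(t/2)L_\lambda}\phi\in L^\infty$ with compact-ish decay (again by differentiating the semigroup and using the heat kernel bound, or by spectral calculus plus \eqref{eq:heatbound1} applied at a later time), and then applies the argument for \eqref{eq:heatbound1} once more to the function $e^{-(t/2)L_\lambda}$ acting on it. The point is that $L_\lambda\psi$ is again of the form $e^{-(t/2)L_\lambda}(\text{nice function})$, so no new input is needed.

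For the H\"older bound \eqref{eq:heatbound4} I would use interior regularity. On the ball $B_{\ell_x}(x)$ with $\ell_x = 1\wedge x_d/2$ we stay at distance $\gtrsim x_d$ from the boundary hyperplane, so there $L_\lambda$ acts (for $\alpha<2$) like $(-\Delta)^{\alpha/2}$ plus the smooth bounded potential $\lambda y_d^{-\alpha}$, and $\psi$ solves a nice equation $(-\Delta)^{\alpha/2}\psi = L_\lambda\psi - \lambda y_d^{-\alpha}\psi$ whose right-hand side is controlled pointwise on a slightly larger ball by \eqref{eq:heatbound1}--\eqref{eq:heatbound2}. Applying interior Schauder estimates for $(-\Delta)^{\alpha/2}$ on $B_{\ell_x}(x)$, rescaled to the ball of radius $\ell_x$, converts the $L^\infty$ bound on $B_{2\ell_x}(x)$ plus the $L^\infty$ bound on the right-hand side into the claimed $C^\beta$-seminorm bound with the correct power $\ell_x^{-\beta}\sim(1\wedge x_d)^{-\beta}$ of the scale; the spatial decay factor $(1\wedge|x|^{-d-\alpha})$ simply rides along because all the inputs carry it and Schauder estimates are local. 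For $\alpha=2$ this is classical elliptic Schauder theory for $-\Delta + \lambda y_d^{-2}$; for $\alpha<2$ one uses the fractional Schauder estimates (e.g.\ for the equation $(-\Delta)^{\alpha/2}u = g$ on a ball), which require $\beta\notin\{\text{integer}\}$ and $\beta<\alpha$ away from those; this is exactly where the restriction $0<\beta<\alpha$ and $\beta>\alpha-1$ appears. Bound \eqref{eq:heatbound3} then follows from \eqref{eq:heatbound2} and \eqref{eq:heatbound4}: away from the boundary $(-\Delta)^{\alpha/2}\psi = L_\lambda\psi - \lambda y_d^{-\alpha}\psi$ is bounded by $(1\wedge x_d)^p(1\wedge|x|^{-d-\alpha}) + (1\wedge x_d)^{-\alpha}\cdot(1\wedge x_d)^p\cdot(1\wedge|x|^{-d-\alpha})$, and the second term dominates, giving $(1\wedge x_d)^{p-\alpha}(1\wedge|x|^{-d-\alpha})$; near the boundary, where $L_\lambda$ no longer agrees with $(-\Delta)^{\alpha/2}$, one computes $(-\Delta)^{\alpha/2}\psi(x)$ directly from its singular-integral definition, splitting into $|y-x|\le\ell_x$ (handled by \eqref{eq:heatbound4}, which contributes $\ell_x^{-\alpha+\beta}[\psi]_{C^\beta}\lesssim(1\wedge x_d)^{p-\alpha}$) and $|y-x|>\ell_x$ (handled by \eqref{eq:heatbound1} and a crude tail estimate), again with the decay factor preserved.

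The main obstacle I anticipate is the bookkeeping in \eqref{eq:heatbound3} and \eqref{eq:heatbound4} near the boundary, i.e.\ correctly tracking the powers of $x_d$ when $x_d$ is small. One has to be careful that the rescaling to the unit ball in the Schauder estimate produces precisely the factor $\ell_x^{-\beta}$ and not some other power, and that the right-hand side $L_\lambda\psi - \lambda y_d^{-\alpha}\psi$ on the ball $B_{2\ell_x}(x)$ is genuinely bounded by the stated quantity uniformly in $x$ — this uses that on $B_{2\ell_x}(x)$ one has $y_d\sim x_d$, so $y_d^{-\alpha}\sim x_d^{-\alpha}$ is essentially constant there and the potential term is a harmless bounded (after rescaling) coefficient. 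A secondary technical point is to make sure the fractional Schauder estimate we invoke is stated in a form valid for the equation on a ball with the correct scaling, and that the exclusion of the borderline exponent $\beta=\alpha-1$ (which is why the hypothesis reads $\beta>\alpha-1$, strictly) is respected. Once these scaling issues are pinned down, everything else is the routine insertion of the Gaussian/polynomial heat kernel bounds into an integral against a fixed compactly supported function.
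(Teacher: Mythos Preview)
Your approach is essentially the same as the paper's: heat kernel bounds give \eqref{eq:heatbound1} and \eqref{eq:heatbound2}, the algebraic relation between $(-\Delta)^{\alpha/2}$ and $L_\lambda$ gives \eqref{eq:heatbound3}, and rescaled Schauder estimates give \eqref{eq:heatbound4}. Two points are worth correcting. First, the identity you use is slightly off: on functions supported in $\overline{\R^d_+}$ one has $(-\Delta)^{\alpha/2}\psi = L_{\lambda_0}\psi = L_\lambda\psi - (\lambda-\lambda_0)x_d^{-\alpha}\psi$, not $L_\lambda\psi - \lambda x_d^{-\alpha}\psi$, because the regional fractional Laplacian $L_0$ differs from the restriction of $(-\Delta)^{\alpha/2}$ by the killing term $\lambda_0 x_d^{-\alpha}$ (see Remark~\ref{otherdef}); this does not affect the size of the bound, but it does mean the identity holds \emph{globally} on $\R^d_+$, so your split into ``away from'' and ``near'' the boundary for \eqref{eq:heatbound3} is unnecessary. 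Consequently the paper proves \eqref{eq:heatbound3} \emph{before} \eqref{eq:heatbound4} directly from this global identity, and then feeds \eqref{eq:heatbound3} (together with \eqref{eq:heatbound1} and a tail estimate) into the Schauder bound \eqref{eq:schauder} to get \eqref{eq:heatbound4}; your reversed order also works but is more roundabout. Second, for \eqref{eq:heatbound2} the paper simply writes $L_\lambda\psi = e^{-tL_\lambda}(L_\lambda k)$ and verifies $|L_\lambda k(x)|\lesssim 1\wedge|x|^{-d-\alpha}$ directly, which avoids the semigroup-splitting detour you sketch. Finally, the restriction $\beta>\alpha-1$ you mention is not part of this lemma; the Schauder estimate \eqref{eq:schauder} is valid for all $0<\beta<\alpha$.
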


We remark that for $\alpha=2$ the decay in these bounds can be greatly improved, but it is convenient for us to have a unified statement.

\begin{proof}[Proof of Lemma \ref{pointwise}]
	We write $\psi=e^{-tL_\lambda}k$. The bound \eqref{eq:heatbound1} follows immediately from Theorems \ref{heatkernel} and \ref{localheatkernelhardy}. For the bound \eqref{eq:heatbound2} we write $L_\lambda\psi = e^{-tL_\lambda}L_\lambda k$. For $\alpha=2$ we have $L_\lambda k\in C_c^\infty(\R^d_+)$ and so the claimed bound follows again from Theorem \ref{localheatkernelhardy}. For $0<\alpha<2$ one easily verifies that
	$$
	|L_\lambda k(x)| \lesssim 1\wedge |x|^{-d-\alpha}
	$$
	and then one uses this bound and Theorem \ref{heatkernel} to again deduce \eqref{eq:heatbound2}. We omit the details of this computation.
	
	To prove \eqref{eq:heatbound3} we recall the definition of $\lambda_0$ from Remark \ref{otherdef}. As shown there, we have $(-\Delta)^{\alpha/2} = L_{\lambda_0}$ on functions supported on $\overline{\R^d_+}$. Thus,
	$$
	(-\Delta)^{\alpha/2}\psi = L_\lambda \psi - (\lambda-\lambda_0)x_d^{-\alpha}\psi \,.
	$$
	Therefore \eqref{eq:heatbound3} follows from \eqref{eq:heatbound1} and \eqref{eq:heatbound2}.
	
	Finally, to prove \eqref{eq:heatbound4} we use Schauder estimates. These bounds state that for a function $u$ on $\R^d$, for $a\in\R^d$, $r>0$ and for $0<\beta<\alpha$, one has
	\begin{align}\label{eq:schauder}
		[u]_{C^\beta(B_{r/2}(a))} & \lesssim_{\alpha,\beta,d} r^{-\beta} \| u \|_{L^\infty(B_{2r}(a))} + r^{\alpha-\beta} \| |\cdot - a |^{-d-\alpha} u \|_{L^1(B_{2r}(a)^c)} \notag \\
		& \qquad \ \ + r^{\alpha-\beta} \| (-\Delta)^{\alpha/2} u \|_{L^\infty(B_{2r}(a))} \,.
	\end{align}
	For $\alpha=2$ this bound is classical and can be deduced, for instance, from \cite[Theorem 3.9 and its proof]{GilbargTrudinger2001}. (Indeed, in this case the term involving the norm on $B_{2r}(a)^c$ is not needed.) For $0<\alpha<2$ the bound appears, for instance, in \cite[Corollary 2.5]{RosOtonSerra2014}.
	
	We apply \eqref{eq:schauder} with $a=x$ and $r=2\tilde\ell_x$ with $\tilde\ell_x = 1\wedge \frac{x_d}{8}$. Using \eqref{eq:heatbound1} and \eqref{eq:heatbound3} we easily find that
	\begin{align*}
		\tilde\ell_x^{-\beta} \|\psi\|_{L^\infty(B_{4\tilde\ell_x}(x))} & \lesssim (1\wedge x_d)^{p-\beta} (1\wedge |x|^{-d-\alpha}) \,,\\
		\tilde\ell_x^{\alpha-\beta} \|(-\Delta)^{\alpha/2} \psi\|_{L^\infty(B_{4\tilde\ell_x}(x))} & \lesssim (1\wedge x_d)^{p-\beta} (1\wedge |x|^{-d-\alpha}) \,.
	\end{align*}
	Moreover, using \eqref{eq:heatbound1} a computation whose details we omit shows that
	$$
	\tilde\ell_x^{\alpha-\beta} \| |\cdot - x |^{-d-\alpha} \psi \|_{L^1(B_{4\tilde\ell_x}(x)^c)} \lesssim (1\wedge x_d)^{p-\beta} (1\wedge |x|^{-d-\alpha}) \,.
	$$
	
	Inserting these bounds into \eqref{eq:schauder} we obtain \eqref{eq:heatbound4} with $\tilde\ell_x$ instead of $\ell_x$. The bound with $\ell_x$ follows by a simple covering argument, using for a given $x$ the bound in $B_{\ell_x}(x)$ together with the bounds in $B_{\ell_y}(y)$ for $y\in B_{\ell_x}(x)\setminus B_{\tilde\ell_x}(x)$.
\end{proof}

\begin{proof}[Proof of Theorem \ref{density}]
	\emph{Step 1.} We first prove this theorem for $f$ of a special form, namely, where $f\in L_\lambda^{s/2}e^{-tL_\lambda} C_c^\infty(\R^d_+)$ for some $0<t<\infty$.
	
	Let $0<t<\infty$ and let $\psi\in e^{-tL_\lambda} C_c^\infty(\R^d)$. For parameters $0<r\leq 1\leq R<\infty$ to be determined, we let $\chi$ and $\theta$ be functions as in Corollary \ref{cutoffcomb} and we abbreviate
	$$
	\phi := \chi\theta \psi \,.
	$$
	Then, by \eqref{eq:heatbound1},
	$$
	\|\phi - \psi\|_{L^2(\R^d_+)} \leq \| \one_{x_d\leq 2r} \psi\|_{L^2(\R^d_+)} + \| \one_{|x|> R} \psi\|_{L^2(\R^d_+)} \lesssim r^{p+1/2} + R^{-\alpha -d/2} \,.
	$$
	Moreover,
	$$
	\|L_\lambda(\phi - \psi)\|_{L^2(\R^d_+)} = \|(1-\chi\theta)L_\lambda\psi\|_{L^2(\R^d_+)} + \| [(-\Delta)^{\alpha/2},\chi\theta] \psi \|_{L^2(\R^d_+)} 
	$$
	and, by \eqref{eq:heatbound2},
	$$
	\|(1-\chi\theta)L_\lambda\psi\|_{L^2(\R^d_+)} \lesssim r^{p+1/2} + R^{-\alpha -d/2} \,.
	$$
	For $\alpha<2$ we apply Corollary \ref{cutoffcomb} and find
	$$
	\| [(-\Delta)^{\alpha/2},\chi\theta] \psi \|_{L^2(\R^d_+)}  \lesssim r^{p-\alpha+1/2} + R^{-\alpha-d/2} \,.
	$$
	The same bound holds for $\alpha=2$ as well, as follows by writing
	$$
	[-\Delta,\chi\theta] \psi = -2\nabla(\chi\theta)\cdot\nabla\psi - \Delta(\chi\theta)\psi
	$$
	and using the pointwise bounds \eqref{eq:heatbound1} and \eqref{eq:heatbound4}. Thus, for all $\alpha\leq 2$,
	$$
	\|L_\lambda(\phi - \psi)\|_{L^2(\R^d_+)} \lesssim r^{p-\alpha+1/2} + R^{-\alpha-d/2} \,.
	$$
	Since $0< s\leq 2$ we have, by the spectral theorem,
	$$
	\|L_\lambda^{s/2}(\phi - \psi)\|_{L^2(\R^d_+)} \leq \|\phi - \psi\|_{L^2(\R^d_+)}^{1-s/2} \|L_\lambda(\phi - \psi)\|_{L^2(\R^d_+)}^{s/2} \,.
	$$
	Inserting the above bounds, we conclude that
	$$
	\|L_\lambda^{s/2}(\phi - \psi)\|_{L^2(\R^d_+)} \lesssim r^{p+1/2-\alpha s/2} + R^{-\alpha-d/2} \,.
	$$
	Since, by assumption $s<(1+2p)/\alpha$, this tends to zero as $r\to 0$ and $R\to\infty$. Note also that $\|\phi-\psi\|_{L^2(\R^d_+)}$ tends to zero, proving the second assertion of the theorem for $f=L_\lambda^{s/2}\psi$.
	
	\medskip
	
	\emph{Step 2.} We now prove Theorem \ref{density} in the general case.
	
	Let $f\in L^2(\R^d_+)$ and $\epsilon>0$. By the spectral theorem, we have $e^{-tL_\lambda}f\to f$ as $t\to 0$ and $e^{-tL_\lambda} f\to 0$ as $t\to\infty$. (The latter convergence uses the fact that $0$ is not an eigenvalue of $L_\lambda$.) Therefore, there are $t_1>0$ such that $\|e^{-t_1L_\lambda}f - f\|_{L^2(\R^d_+)}\leq\epsilon$ and $t_2<\infty$ such that $\|e^{-t_2 L_\lambda} f\|_{L^2(\R^d)}\leq\epsilon$. Then, with $t:=t_1/2$ and $T:=t_2/2$,
	$$
	\| (e^{-2tL_\lambda} - e^{-2TL_\lambda})f - f \|_{L^2(\R^d_+)} \leq \|e^{-t_1L_\lambda}f - f\|_{L^2(\R^d_+)} + \|e^{-t_2 L_\lambda} f\|_{L^2(\R^d)}\leq 2\epsilon \,.
	$$
	Since $C_c^\infty(\R^d_+)$ is dense in $L^2(\R^d_+)$ and since $L_\lambda^{-s/2}(e^{-tL_\lambda}-e^{-TL_\lambda})$ is bounded (since $s\in[0,2]$), there is a $k\in C^\infty_c(\R^d_+)$ such that
	$$
	\| k - L_\lambda^{-s/2}(e^{-tL_\lambda}-e^{-TL_\lambda}) f \|_{L^2(\R^d_+)} \leq \epsilon \,.
	$$
	We define $\psi:= (e^{-tL_\lambda}+e^{-TL_\lambda})k$ and write
	$$
	e^{-2tL_\lambda}-e^{-2TL_\lambda} = L_\lambda^{s/2}(e^{-tL_\lambda}+e^{-TL_\lambda}) L_\lambda^{-s/2}(e^{-tL_\lambda}-e^{-TL_\lambda})
	$$
	to find
	\begin{align*}
		\| L_\lambda^{s/2}\psi - f \|_{L^2(\R^d_+)} & \leq \|L_\lambda^{s/2} (e^{-tL_\lambda}+e^{-TL_\lambda})\| \| k -  L_\lambda^{-s/2}(e^{-tL_\lambda}-e^{-TL_\lambda}) f \|_{L^2(\R^d_+)} \\
		& \quad + \| (e^{-2tL_\lambda} - e^{-2TL_\lambda})f - f \|_{L^2(\R^d_+)} \\
		& \leq \left( \|L_\lambda^{s/2} (e^{-tL_\lambda}+e^{-TL_\lambda})\| + 2\right) \epsilon \,.
	\end{align*}
	According to Step 1 (applied both to $L_\lambda^{s/2} e^{-tL_\lambda}k$ and to $L_\lambda^{s/2} e^{-TL_\lambda}k$) there is a $\phi\in C_c^\infty$ such that
	$$
	\| L_\lambda^{s/2} \phi - L_\lambda^{s/2}\psi \|_{L^2(\R^d_+)} \leq \epsilon \,.
	$$
	It follows that
	$$
	\| L_\lambda^{s/2}\phi - f \|_{L^2(\R^d_+)} \leq \left( \|L_\lambda^{s/2} (e^{-tL_\lambda}+e^{-TL_\lambda})\| + 3\right) \epsilon \,.
	$$
	This proves the first assertion of the theorem.
	
	For the second assertion we assume that $f\in\dom L_\lambda^{-s/2}$. Then we choose $t_1,t_2\in(0,\infty)$ such that, in addition, we have $\| e^{-t_1L_\lambda} L_\lambda^{-s/2} f- L_\lambda^{-s/2} f\|_{L^2(\R^d_+)}\!\leq\epsilon$ and $\|e^{-t_2L_\lambda} L_\lambda^{-s/2}f\|_{L^2(\R^d_+)}\!\leq\epsilon$. Then $\|(e^{-2tL_\lambda}-e^{-2TL_\lambda})L_\lambda^{-s/2} f - L_\lambda^{-s/2} f\|_{L^2(\R^d_+)} \leq 2\epsilon$. Moreover, by Step 1, we may assume, in addition, that $\|\phi - \psi\|_{L^2(\R^d_+)}\leq\epsilon$. From this one deduces, similarly as before,
	$$
	\| \phi - L_\lambda^{-s/2} f \|_{L^2(\R^d_+)} \leq \left( \|e^{-tL_\lambda}+e^{-TL_\lambda}\| + 3\right) \epsilon \,,
	$$
	which completes the proof of the theorem.	
\end{proof}


\section{Proof of the main result}\label{sec:proofmain}

\begin{proof}[Proof of Theorem \ref{equivalencesobolev}]
	We begin with the proof of \eqref{eq:equivalencesobolev1} and \eqref{eq:equivalencesobolev2} for functions $u\in C^\infty_c(\R^d_+)$. Using just the triangle inequality, the claims are an immediate consequence of the usual Hardy inequality, as well as its reversed and generalized versions in Theorems~\ref{genhardy} and \ref{reversehardy}. The argument is as in \cite{Killipetal2018,Franketal2021} and we omit the details.
	
	We now extend \eqref{eq:equivalencesobolev1} to all $u\in\dom L_\lambda^{s/2}$. According to Theorem \ref{density} (applied to $f=L_\lambda^{s/2}u$) there is a sequence $(\phi_n)\subset C_c^\infty(\R^d_+)$ such that $\phi_n\to u$ in $L^2(\R^d_+)$ and $L_\lambda^{s/2} \phi_n\to L_\lambda^{s/2} u$ in $L^2(\R^d_+)$. It follows from inequality \eqref{eq:equivalencesobolev1}, applied to $\phi_n-\phi_m$, that $(L_0^{s/2}\phi_n)$ is Cauchy in $L^2(\R^d_+)$ and therefore convergent to some $f\in L^2(\R^d_+)$. Since the operator $L_0^{s/2}$ is closed, we conclude that $u\in\dom L_0^{s/2}$ and $L_0^{s/2}u =f$. The claimed inequality \eqref{eq:equivalencesobolev1} for $u$ now follows by passing to the limit in the inequality for $\phi_n$.
	
	The extension of \eqref{eq:equivalencesobolev2} follows similarly. We only note that the $p$ that corresponds to $\lambda=0$ is $(\alpha-1)_+$. Therefore the assumption $s<(1+2(\alpha-1)_+)/\alpha$ in Theorem~\ref{equivalencesobolev} coincides with the assumption in Theorem~\ref{density} (applied with $\lambda=0$).
\end{proof}

We now discuss optimality of the assumptions in Theorem \ref{equivalencesobolev}.

\begin{proposition}\label{opt}
	Let $\alpha\in(0,2]$ and let $\lambda\geq0$ when $\alpha\in(0,2)$ and $\lambda\geq-1/4$ when $\alpha=2$. Let $p$ be defined by \eqref{eq:defp}, and let $s\in(0,2]$.
	\begin{enumerate}
		\item[(1)] If $\lambda<0$, $p<d-1/2$ and $\dom (L_\lambda^{(\alpha)})^{s/2} \subset \dom (L_0^{(\alpha)})^{s/2}$, then $s<(1+2p)/\alpha$.
		\item[(2)] If $\lambda>0$, $(\alpha-1)_+<d-1/2$ and $\dom (L_0^{(\alpha)})^{s/2} \subset (L_0^{(\alpha)})^{s/2}$, then $s<(1+2(\alpha-1)_+)/\alpha$.
	\end{enumerate}
\end{proposition}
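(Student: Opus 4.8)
The plan is to establish both statements by contraposition, exhibiting a function $u$ that belongs to the domain of the power of one operator but not the other, thereby forcing the claimed bound on $s$. The two parts are symmetric in spirit, with the roles of $L_\lambda^{(\alpha)}$ and $L_0^{(\alpha)}$ exchanged, and the critical exponent in each case governed by the value of $p$ attached to the respective operator: $p$ itself for $L_\lambda^{(\alpha)}$ and $(\alpha-1)_+$ for $L_0^{(\alpha)}$. In both cases the strategy is to combine the generalized Hardy inequality (Theorem \ref{genhardy}), which controls $\|x_d^{-\alpha s/2}u\|_{L^2}$ by $\|(L_\bullet^{(\alpha)})^{s/2}u\|_{L^2}$ whenever $s$ is below the appropriate threshold, with a test function whose behavior near the boundary hyperplane $\{x_d=0\}$ saturates that threshold.

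First I would prove part (1). Suppose, for contradiction, that $\dom(L_\lambda^{(\alpha)})^{s/2}\subset\dom(L_0^{(\alpha)})^{s/2}$ but $s\geq(1+2p)/\alpha$; note that $\lambda<0$ forces $p<\frac{\alpha-1}2$ when $\alpha>1$, or more generally $p<(\alpha-1)_+$, so that the threshold $(1+2p)/\alpha$ is strictly below the threshold $(1+2(\alpha-1)_+)/\alpha$ governing $L_0^{(\alpha)}$. By the closed graph theorem, the inclusion of domains implies the bound $\|(L_0^{(\alpha)})^{s/2}u\|_{L^2}\lesssim\|(L_\lambda^{(\alpha)})^{s/2}u\|_{L^2}$ for all $u\in\dom(L_\lambda^{(\alpha)})^{s/2}$, in particular for all $u\in C_c^\infty(\R^d_+)$. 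Combined with the generalized Hardy inequality for $L_0^{(\alpha)}$ — valid since $s<(1+2(\alpha-1)_+)/\alpha$ and $s<2d/\alpha$, the latter following from $p<d-1/2$ — this would give $\|x_d^{-\alpha s/2}u\|_{L^2}\lesssim\|(L_\lambda^{(\alpha)})^{s/2}u\|_{L^2}$ for all $u\in C_c^\infty(\R^d_+)$, i.e. the generalized Hardy inequality for $L_\lambda^{(\alpha)}$ at an exponent $s\geq(1+2p)/\alpha$. But the ``necessity'' half of Theorem \ref{genhardybdd} (equivalently, the first part of the proof of Theorem \ref{genhardybdd}) shows precisely that this inequality forces $s<(1+2p)/\alpha$ — a contradiction. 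The hypothesis $p<d-1/2$ is what guarantees $s<2d/\alpha$ is available in the range where the argument runs: indeed if $s\geq(1+2p)/\alpha$ were to violate $s<2d/\alpha$ one would need $(1+2p)/\alpha<2d/\alpha$, i.e. $p<d-1/2$, so one applies the necessity statement in the overlapping range.

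For part (2) the argument is entirely parallel with $\lambda$ and $0$ interchanged: assuming $\dom(L_0^{(\alpha)})^{s/2}\subset\dom(L_\lambda^{(\alpha)})^{s/2}$ and $s\geq(1+2(\alpha-1)_+)/\alpha$, the closed graph theorem yields $\|(L_\lambda^{(\alpha)})^{s/2}u\|_{L^2}\lesssim\|(L_0^{(\alpha)})^{s/2}u\|_{L^2}$, and since $\lambda>0$ means $p>(\alpha-1)_+$, the generalized Hardy inequality for $L_\lambda^{(\alpha)}$ holds at exponent $s$ (because $s\geq(1+2(\alpha-1)_+)/\alpha$ need not exceed $(1+2p)/\alpha$ — wait, this requires care). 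The cleaner route, and the one I would actually carry out, is to apply Theorem \ref{genhardybdd}'s necessity statement to $L_0^{(\alpha)}$: the chain of inequalities gives $\|x_d^{-\alpha s/2}u\|_{L^2}\lesssim\|(L_0^{(\alpha)})^{s/2}u\|_{L^2}$, hence by necessity $s<(1+2(\alpha-1)_+)/\alpha$, a contradiction. Here $(\alpha-1)_+<d-1/2$ supplies the constraint $s<2d/\alpha$ in the relevant range. The main obstacle is purely bookkeeping: one must make sure that at every application of Theorem \ref{genhardybdd} (both the sufficiency needed to run the composition and the necessity needed to derive the contradiction) the exponent $s$ lies in the stated admissible window $(0,\tfrac{2d}\alpha\wedge\tfrac{2(d+2p)}\alpha)$, and the two dimensional hypotheses $p<d-1/2$ and $(\alpha-1)_+<d-1/2$ are exactly what is needed to keep $s$ inside that window at the threshold. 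No new analytic input beyond Theorems \ref{genhardy} and \ref{genhardybdd} and the closed graph theorem is required.
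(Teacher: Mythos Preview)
Your overall strategy is right --- combine the domain inclusion with the generalized Hardy inequality for the ``other'' operator, then contradict via a test function with the wrong boundary behavior --- and this matches the paper's approach. But there is a genuine gap at the test-function step, and a secondary gap in handling the full range of $s$.

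The main gap is your invocation of the necessity half of Theorem~\ref{genhardybdd}. That necessity concerns the operator $x_d^{-\alpha s/2}L_\lambda^{-s/2}$ on all of $L^2$: its proof exhibits $\phi\in C_c^\infty(\R^d_+)$ with $\|x_d^{-\alpha s/2}L_\lambda^{-s/2}\phi\|=\infty$. What you have derived (via closed graph and Hardy for $L_0$) is $\|x_d^{-\alpha s/2}u\|\lesssim\|L_\lambda^{s/2}u\|$ for $u\in\dom L_\lambda^{s/2}$. To contradict this using that test function you would need $u=L_\lambda^{-s/2}\phi\in\dom L_\lambda^{s/2}$, hence in particular $u\in L^2(\R^d_+)$. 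But the Riesz kernel bounds give $(L_\lambda^{-s/2}\phi)(x)\sim|x|^{\alpha s/2-d}$ at infinity, which is \emph{not} in $L^2$ once $s\geq d/\alpha$ --- and this already fails in cases of interest (e.g.\ $\alpha=2$, $d=1$). The paper closes this gap by taking $u\in e^{-L_\lambda}C_c^\infty(\R^d_+)$ instead: such $u$ lies in every $\dom L_\lambda^{n}$, and the \emph{lower} heat kernel bound in Theorem~\ref{heatkernel} yields $u(x)\gtrsim x_d^p$ near the boundary, exactly parallel to the Riesz-kernel computation. This is analytic input beyond what you listed.

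The secondary gap is that your chain needs Hardy for $L_0$, which requires $s<\tfrac{1+2(\alpha-1)_+}{\alpha}\wedge\tfrac{2d}{\alpha}$; thus the argument as written only runs for $s\in[\tfrac{1+2p}{\alpha},\tfrac{1+2(\alpha-1)_+}{\alpha}\wedge\tfrac{2d}{\alpha})$. Your ``overlapping range'' remark does not finish the job, since the contrapositive must be established for each fixed $s\geq\tfrac{1+2p}{\alpha}$, not merely for some. The paper handles larger $s$ via operator monotonicity of $t\mapsto t^\theta$ for $\theta\in(0,1)$: if the inclusion $\dom L_\lambda^{s/2}\subset\dom L_0^{s/2}$ held at some $s$, it would hold at every smaller exponent, so it suffices to disprove it at a single value in the above interval.
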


Note that the `additional' assumptions $p<d-1/2$ and $(\alpha-1)_+<d-1/2$ are automatically satisfied when $d\geq 2$ or when $d=1$ and $\alpha\leq 3/2$.

\begin{remark}
	Let $\alpha\in(0,2)$, $\lambda\in[\lambda_*,0)$ and assume that $e^{-tL_\lambda}(x,y)$ satisfies the lower bound in \eqref{eq:heatkernel} with $p$ defined in \eqref{eq:defp}. Then part (1) in Proposition \ref{opt} remains valid for this value of $\lambda$. This follows by the same arguments as in the proof below.
\end{remark}

\begin{proof}
	We prove part (1), the other part being proved similarly. We will prove the theorem under the additional assumption $s<\frac{1+2(\alpha-1)_+}{2}\wedge\frac{2d}{\alpha}$. Note that the assumption $\lambda<0$ (which is equivalent to $p<(\alpha-1)_+)$ and the assumption $p<d-1/2$ imply that the interval $[\frac{1+2p}\alpha,\frac{1+2(\alpha-1)_+}{2}\wedge\frac{2d}{\alpha})$ is nonempty. Thus our proof will show that in this interval the inclusion $\dom L_\lambda^{s/2}\subset\dom L_0^{s/2}$ fails. By operator monotonicity of taking roots (see, e.g., \cite[Section~10.4]{BirmanSolomjak1987}) it then follows that the inequality also fails for all $s\geq \frac{1+2(\alpha-1)_+}{2}\wedge\frac{2d}{\alpha}$.
	
	Thus, assume that $s<\frac{1+2(\alpha-1)_+}{2}\wedge\frac{2d}{\alpha}$. Let $u\in e^{-L_\lambda} C_c^\infty(\R^d_+)$. Then $u\in\dom L_\lambda^{s/2}\subset \dom L_0^{s/2}$. Applying Theorem \ref{genhardybdd} with $\lambda=0$ and $g=L_0^{s/2} u$ (here we need the upper bound on $s$) we infer that $x_d^{-\alpha s/2} u\in L^2(\R^d_+)$. Using the lower bound in Theorem \ref{heatkernel} and arguing as in the necessity part of the proof of Theorem \ref{genhardybdd} we deduce that $s<(1+2p)/\alpha$, as claimed.
\end{proof}

\appendix

\section{Definition of the exponent $p$}\label{s:defp}

Throughout this appendix we assume $\alpha\in(0,2)$. For $p\in(-1,\alpha)$, we set
\begin{align*}
	\gamma(\alpha,p) := \int_0^1 \frac{(t^p-1)(1-t^{\alpha-p-1})}{(1-t)^{1+\alpha}}\,dt \,.
\end{align*}
The function $C$ is defined in \cite[Remark 3.3]{Choetal2020} for $d=1$ by
\begin{align*}
	\begin{split}
		(-1,\alpha)\ni p \mapsto C(p)
		& := \mathcal A(1,-\alpha)\, \gamma(\alpha,p)
	\end{split}
\end{align*}
and in \cite[Equation (3.4)]{Choetal2020} for $d\geq 2$ by
\begin{align*}
	\begin{split}
		(-1,\alpha)\ni p \mapsto C(p)
		& := \mathcal A(d,-\alpha)\, \frac{|\Sph^{d-2}|}2\, B\left(\frac{\alpha+1}2,\frac{d-1}2\right) \gamma(\alpha,p)
	\end{split}
\end{align*}
with the beta function $B$. Let us show that that these definitions coincides with our definition \eqref{eq:defcp} and, in particular, that they are independent of $d$. 

First, we recalling the formula for $\mathcal A(d,-\alpha)$ from \eqref{eq:adalpha} and $|\Sph^{d-2}|= \frac{2\pi^{\frac{d-1}2}}{\Gamma(\frac{d-1}{2})}$ we find
\begin{equation*}
	\mathcal A(d,-\alpha)\, \frac{|\Sph^{d-2}|}2\, B\left(\frac{\alpha+1}2,\frac{d-1}2\right)
	= \mathcal A(1,-\alpha) \,,
\end{equation*}
which already shows the independence of $d$. Thus, from now on $d=1$. Moreover, by the reflection and duplication formulas of the gamma function, we obtain
\begin{equation}
	\label{eq:compprefac}
	\mathcal A(1,-\alpha) = \frac{\sin\tfrac{\pi\alpha}{2}}{\pi}\ \Gamma(\alpha+1) \,.
\end{equation}

Next, according to \cite[(2.2)]{BogdanDyda2011} we have for $\alpha\neq 1$ and $\alpha>p>-1$
\begin{align*}
	\begin{split}
		\gamma(\alpha,p)
		& = \frac{1}{\alpha(\alpha-1)}\left[(p+1-\alpha)(p+2-\alpha) B(p+1,2-\alpha) \right. \\
		& \qquad\qquad\quad \left. - (1-\alpha)(2-\alpha)B(1,2-\alpha) + p(p-1) B(\alpha-p,2-\alpha)\right].
	\end{split}
\end{align*}
Expressing the beta functions as gamma functions and using its functional equation, we find
\begin{align}
	\label{eq:computegamma}
	\begin{split}
          \gamma(\alpha,p)
		& = \frac{1}{\alpha(\alpha-1)}\left[\frac{\Gamma(p+1)\,\Gamma(2-\alpha)}{\Gamma(p-\alpha+1)} - (1-\alpha) + \frac{\Gamma(\alpha-p)\,\Gamma(2-\alpha)}{\Gamma(-p)} \right] \\
		& = \frac1\alpha - \frac{\Gamma(1-\alpha)} \alpha \left[ \frac{\Gamma(p+1)}{\Gamma(p-\alpha+1)} + \frac{\Gamma(\alpha-p)}{\Gamma(-p)} \right].
	\end{split}
\end{align}
Using the reflection formula for the gamma function, we obtain
$$
\frac{\Gamma(p+1)}{\Gamma(p-\alpha+1)} + \frac{\Gamma(\alpha-p)}{\Gamma(-p)}
	= - \frac1\pi \Gamma(1+p)\Gamma(\alpha-p) \left( \sin\pi(p-\alpha) +\sin\pi p \right).
$$
Inserting this into \eqref{eq:computegamma} and combining it with \eqref{eq:compprefac} we obtain
$$
C(p) = \frac{\sin\tfrac{\pi\alpha}{2}}{\pi}\, \Gamma(\alpha) +  \Gamma(\alpha)\Gamma(1-\alpha)\Gamma(1+p)\Gamma(\alpha-p) \frac{\sin\tfrac{\pi\alpha}{2}}{\pi^2} \left( \sin\pi(p-\alpha) +\sin\pi p \right).
$$
The claimed formula \eqref{eq:defcp} now follows from  $\Gamma(\alpha)\Gamma(1-\alpha) = \pi/\sin(\pi\alpha)$ (by the reflection formula) and
$$
\frac{\sin\tfrac{\pi\alpha}{2}}{\sin \pi\alpha} \left( \sin\pi(p-\alpha) +\sin\pi p \right) = \sin\frac{\pi(2p-\alpha)}{2} \,.
$$

\medskip

Having established the equality between our definition of $C$ and that in \cite{Choetal2020}, we can use its properties established in \cite[Subsection 3.1]{Choetal2020}, namely, its strict monotonicity on $[\frac{\alpha-1}{2},\alpha)$, its divergence at $p=\alpha$ and its vanishing at $p=\alpha-1,0$. Its symmetry with respect to $p=\frac{\alpha-1}2$ is immediate from \eqref{eq:defcp}.


\section{Proof of Theorem \ref{localheatkernelhardy}}\label{s:localheatkernel}

Throughout this section, we assume $\alpha=2$. 

\emph{Step 1.} By separation of variables, we have
$$
e^{-t L_\lambda}(x,y) = e^{t\Delta_{\R^{d-1}}}(x',y') \cdot e^{-t(-\Delta_{\R_+} + \lambda x_d^{-2})}(x_d,y_d) \,.
$$
Since the first factor is equal to $(4\pi t)^{-\frac{d-1}2} e^{-|x'-y'|^2/4t}$, we see that the theorem in dimensions $d\geq 2$ follows from its special case for $d=1$. Thus, in what follows we consider the latter case.

\medskip

\emph{Step 2.} It is convenient to work with a unitarily equivalent version of $L_\lambda$. Namely for $\mu\geq 0$ we consider the nonnegative quadratic form
$$
\int_0^\infty |u'|^2 r^{2\mu+1}\,dr
$$
defined for $u\in C^1_c(\R_+)$. By a theorem of Friedrichs this form gives rise to a selfadjoint, nonnegative operator $\mathcal L_\mu$ in the Hilbert space $L^2(\R_+,r^{2\mu+1}dr)$. We note that functions $u$ in the operator domain of $\mathcal L_\mu$ are twice weakly differentiable and $\mathcal L_\mu u = - u'' - (2\mu+1)r^{-1} u'$.

It is well-known that the operator $\mathcal L_\mu$ can be diagonalized by a Hankel transform, which, in particular, gives an integral formula for its heat kernel. The resulting integral over Bessel functions can be carried out using standard formulas and one arrives at the explicit expression
\begin{equation}
  \label{eq:heathalflineexplicit}
  \me{-t\cL_{\mu}}(r,s) = (2t)^{-1}\left(\frac{1}{rs}\right)^\mu \cdot \exp\left(-\frac{r^2+s^2}{4t}\right) I_{\mu}\left(\frac{rs}{2t}\right) \\
\end{equation}
This formula appears, for instance, in \cite[p.~75]{BorodinSalminen2002}. We emphasize that this is the heat kernel with respect to the underlying measure $r^{2\mu+1}\,dr$, that is
$$
(e^{-t\mathcal L_\mu}f)(r) = \int_0^\infty \me{-t\cL_{\mu}}(r,s) f(s)\,s^{2\mu+1}\,ds \,.
$$
Using the facts that
$$
\lim_{z\to 0} z^{-\mu} I_\mu(z) = \frac{2^{-\mu}}{\Gamma(1+\mu)}
\qquad\text{and}\qquad
\lim_{z\to\infty} z^{\frac12} e^{-z} I_\mu(z) = \frac{1}{\sqrt{2\pi}} \,,
$$
we immediately obtain from \eqref{eq:heathalflineexplicit} that
\begin{align}
	\label{eq:bogusmalecki}
	\begin{split}
		\me{-t\cL_\mu}(r,s)
		\sim \left(1\wedge \frac{r\cdot s}{t}\right)^{\mu+\frac12}\left(\frac{1}{rs}\right)^{\mu+\frac12}\cdot t^{-\frac12}\cdot\exp\left(-\frac{(r-s)^2}{4t}\right).
	\end{split}
\end{align}

Next, we show that there is a $0<c< 1$ such that for all $r,s,t>0$ one has
\begin{align}
	\label{eq:bogusmalecki2}
	\begin{split}
		& \left(1\wedge \frac{r}{\sqrt t}\right)^{\mu+\frac12}\left(1\wedge \frac{s}{\sqrt t}\right)^{\mu+\frac12} \left(\frac{1}{rs}\right)^{\mu+\frac12} \cdot t^{-\frac12}\cdot\exp\left(-\frac{(r-s)^2}{4t}\right)\\
		& \quad \lesssim \me{-t\cL_\mu}(r,s)\\
		& \quad \lesssim \left(1\wedge \frac{r}{\sqrt t}\right)^{\mu+\frac12}\left(1\wedge \frac{s}{\sqrt t}\right)^{\mu+\frac12} \left(\frac{1}{rs}\right)^{\mu+\frac12} \cdot t^{-\frac12}\cdot\exp\left(-c \frac{(r-s)^2}{4t}\right)\,.
	\end{split}
\end{align}
In fact, we show that this holds for any $0<c<1$, but the constant that our proof gives for the second ``$\lesssim$'' diverges as $c$ approaches $1$.

Note that $(1\wedge \frac{r}{\sqrt t})(1\wedge \frac{s}{\sqrt t})$ and $1\wedge \frac{rs}{t}$ coincide when either $r,s\leq\sqrt t$ or $r,s\geq \sqrt t$ and that the former is never larger than the latter for any $r,s$. In view of \eqref{eq:bogusmalecki}, this proves the first ``$\lesssim$'' in \eqref{eq:bogusmalecki2} and shows that we only need to prove the second ``$\lesssim$'' in the regions $r\leq\sqrt t\leq s$ and $s\leq\sqrt t\leq r$. By symmetry, it suffices to consider the former region. Moreover, by scaling, we can suppose $t=1/4$. We abbreviate $p:=\mu+1/2$ and show that there is a $0<c<1$ such that, for all $r\leq 1/2\leq s$,
\begin{align}
	\label{eq:bogusmaleckiaux1}
	\begin{split}
		(1\wedge rs)^p \exp\left(-(r-s)^2\right)
		\lesssim r^p \cdot \exp\left(-c(r-s)^2\right).
	\end{split}
\end{align}
This will clearly imply the second ``$\lesssim$'' in \eqref{eq:bogusmalecki2}.

For the proof of \eqref{eq:bogusmaleckiaux1} we distinguish between $rs\leq1$ and $rs\geq1$ and start with the former case. Here we need to show
\begin{align*}
	s^p\exp(-(r-s)^2) \lesssim \exp(-c(r-s)^2)\,.
\end{align*}
This can be inferred by taking the $p$-th root and the inequalities $s=(s-r)+r\leq(s-r)+1/2$. The term corresponding to $(s-r)$ can be controlled by taking $c<1$ arbitrary. To prove \eqref{eq:bogusmaleckiaux1} when $rs\geq1$ we need to show
\begin{align*}
	\exp\left(-(r-s)^2\right)
	\lesssim r^p \cdot \exp\left(-c(r-s)^2\right)\,.
\end{align*}
This can be inferred by multiplying by $r^p$, taking the $p$-th root and the inequalities $r^{-1}\leq s = (s-r)+r\leq (s-r)+1/2$. As before, the term corresponding to $(s-r)$ can be controlled by taking $c<1$ arbitrary. This completes the proof of \eqref{eq:bogusmaleckiaux1}.

\medskip

\emph{Step 3.} It remains to translate the result from the operator $\mathcal L_\mu$ to the operator $L_\lambda$. The operator $U$, defined by $(Uf)(x) = x^{\mu+\frac12} f(x)$, is unitary from $L^2(\R_+,r^{2\mu+1}\,dr)$ to $L^2(\R_+,dx)$. It maps $C^1_c(\R_+)$ into itself and, for a function $u$ from this space, we find by an integration by parts
$$
\int_0^\infty \left( |(Uu)'(x)|^2 + (\mu^2-\tfrac14) x^{-2} |Uu(x)|^2\right)dx = \int_0^\infty |u'(r)|^2 r^{2\mu+1}\,dr \,.
$$
This implies that
$$
U^* L_{\mu^2-1/4} U = \mathcal L_\mu
$$
and, consequently, for all $t,x,y>0$,
$$
e^{-t L_{\mu^2-1/4}}(x,y) = (xy)^{\mu+\frac12} e^{-t\mathcal L_\mu}(x,y) \,.
$$
In view of \eqref{eq:bogusmalecki2} we obtain the assertion in Theorem \ref{localheatkernelhardy}.\qed


\begin{thebibliography}{KMV{\etalchar{+}}18}

\bibitem[BBC03]{Bogdanetal2003}
Krzysztof Bogdan, Krzysztof Burdzy, and Zhen-Qing Chen.
\newblock Censored stable processes.
\newblock {\em Probab. Theory Related Fields}, 127(1):89--152, 2003.

\bibitem[BD11]{BogdanDyda2011}
Krzysztof Bogdan and Bart{\l}omiej Dyda.
\newblock The best constant in a fractional {H}ardy inequality.
\newblock {\em Math. Nachr.}, 284(5-6):629--638, 2011.

\bibitem[BD23]{BuiDAncona2023}
The~Anh Bui and Piero D'Ancona.
\newblock Generalized {H}ardy operators.
\newblock {\em Nonlinearity}, 36(1):171--198, 2023.

\bibitem[BN22]{BuiNader2022}
The~Anh Bui and Georges Nader.
\newblock Hardy spaces associated to generalized {H}ardy operators and
  applications.
\newblock {\em NoDEA Nonlinear Differential Equations Appl.}, 29(4):Paper No.
  40, 40, 2022.

\bibitem[BS87]{BirmanSolomjak1987}
M.~Sh. Birman and M.~Z. Solomjak.
\newblock {\em Spectral Theory of Selfadjoint Operators in {H}ilbert Space}.
\newblock Mathematics and its Applications (Soviet Series). D. Reidel
  Publishing Co., Dordrecht, 1987.
\newblock Translated from the 1980 Russian original by S. Khrushch\"ev and V.
  Peller.

\bibitem[BS02]{BorodinSalminen2002}
Andrei~N. Borodin and Paavo Salminen.
\newblock {\em Handbook of {B}rownian Motion --- Facts and Formulae}.
\newblock Probability and its Applications. Birkh\"{a}user Verlag, Basel,
  second edition, 2002.

\bibitem[CK03]{ChenKumagai2003}
Zhen-Qing Chen and Takashi Kumagai.
\newblock Heat kernel estimates for stable-like processes on {$d$}-sets.
\newblock {\em Stochastic Process. Appl.}, 108(1):27--62, 2003.

\bibitem[CKS10]{Chenetal2010}
Zhen-Qing Chen, Panki Kim, and Renming Song.
\newblock Two-sided heat kernel estimates for censored stable-like processes.
\newblock {\em Probab. Theory Related Fields}, 146(3-4):361--399, 2010.

\bibitem[CKSV20]{Choetal2020}
Soobin Cho, Panki Kim, Renming Song, and Zoran Vondra\v{c}ek.
\newblock Factorization and estimates of {D}irichlet heat kernels for non-local
  operators with critical killings.
\newblock {\em J. Math. Pures Appl. (9)}, 143:208--256, 2020.

\bibitem[FG16]{FrankGeisinger2016}
Rupert~L. Frank and Leander Geisinger.
\newblock Refined semiclassical asymptotics for fractional powers of the
  {L}aplace operator.
\newblock {\em J. Reine Angew. Math.}, 712:1--37, 2016.

\bibitem[FLS07]{Franketal2007S}
Rupert~L. Frank, Elliott~H. Lieb, and Robert Seiringer.
\newblock Stability of relativistic matter with magnetic fields for nuclear
  charges up to the critical value.
\newblock {\em Comm. Math. Phys.}, 275(2):479--489, 2007.

\bibitem[FLS08]{Franketal2008H}
Rupert~L. Frank, Elliott~H. Lieb, and Robert Seiringer.
\newblock Hardy-{L}ieb-{T}hirring inequalities for fractional {S}chr\"odinger
  operators.
\newblock {\em J. Amer. Math. Soc.}, 21(4):925--950, 2008.

\bibitem[FMS21]{Franketal2021}
Rupert~L. Frank, Konstantin Merz, and Heinz Siedentop.
\newblock Equivalence of {S}obolev norms involving generalized {H}ardy
  operators.
\newblock {\em International Mathematics Research Notices}, 2021(3):2284--2303,
  February 2021.

\bibitem[FMS23a]{Franketal2023}
Rupert~L. {Frank}, Konstantin {Merz}, and Heinz {Siedentop}.
\newblock Relativistic strong {S}cott conjecture: A short proof.
\newblock In B.-G. Englert, H.~Siedentop, and M.-I. Trappe, editors, {\em
  Density Functionals for Many-Particle Systems: Mathematical Theory and
  Physical Applications}, volume~41 of {\em Lecture Notes Series, Institute of
  Mathematical Sciences, National University of Singapore}, pages 69--79. World
  Scientific, April 2023.

\bibitem[FMS23b]{Franketal2023T}
Rupert~L. Frank, Konstantin Merz, and Heinz Siedentop.
\newblock The {S}cott conjecture for large {C}oulomb systems: a review.
\newblock {\em Lett. Math. Phys.}, 113(1):Paper No. 11, 2023.

\bibitem[FMSS20]{Franketal2020P}
Rupert~L. {Frank}, Konstantin {Merz}, Heinz {Siedentop}, and Barry {Simon}.
\newblock Proof of the strong {S}cott conjecture for {C}handrasekhar atoms.
\newblock {\em Pure Appl. Funct. Anal.}, 5(6):1319--1356, December 2020.

\bibitem[FS08]{FrankSeiringer2008}
Rupert~L. Frank and Robert Seiringer.
\newblock Non-linear ground state representations and sharp {H}ardy
  inequalities.
\newblock {\em J. Funct. Anal.}, 255(12):3407--3430, 2008.

\bibitem[FS10]{FrankSeiringer2010}
Rupert~L. Frank and Robert Seiringer.
\newblock Sharp fractional {H}ardy inequalities in half-spaces.
\newblock In {\em Around the research of {V}ladimir {M}az'ya. {I}}, volume~11
  of {\em Int. Math. Ser. (N. Y.)}, pages 161--167. Springer, New York, 2010.

\bibitem[GT01]{GilbargTrudinger2001}
David Gilbarg and Neil~S. Trudinger.
\newblock {\em Elliptic Partial Differential Equations of Second Order}.
\newblock Classics in Mathematics. Springer-Verlag, Berlin, 2001.
\newblock Reprint of the 1998 edition.

\bibitem[Har19]{Hardy1919}
G.~H. Hardy.
\newblock Notes on some points in the integral calculus {LI}: On {H}ilbert's
  double-series theorem, and some connected theorems concerning the convergence
  of infinite series and integrals.
\newblock {\em Messenger of Mathematics}, 48:107--112, 1919.

\bibitem[Har20]{Hardy1920}
G.~H. Hardy.
\newblock Note on a theorem of {H}ilbert.
\newblock {\em Mathematische Zeitschrift}, 6(3--4):314--317, 1920.

\bibitem[Her77]{Herbst1977}
Ira~W. Herbst.
\newblock Spectral theory of the operator {$(p^2+m^2)^{1/2} - Ze^2/r$}.
\newblock {\em Comm.\ Math.\ Phys.}, 53:285--294, 1977.

\bibitem[JKS22a]{Jakubowskietal2022}
Tomasz {Jakubowski}, Kamil {Kaleta}, and Karol {Szczypkowski}.
\newblock {Bound states and heat kernels for fractional-type {S}chr{\"o}dinger
  operators with singular potentials}.
\newblock {\em arXiv e-prints}, page arXiv:2208.00683, August 2022.

\bibitem[JKS22b]{Jakubowskietal2022R}
Tomasz {Jakubowski}, Kamil {Kaleta}, and Karol {Szczypkowski}.
\newblock {Relativistic stable operators with critical potentials}.
\newblock {\em arXiv e-prints}, page arXiv:2208.00687, August 2022.

\bibitem[KMP06]{Kufneretal2006}
Alois Kufner, Lech Maligranda, and Lars-Erik Persson.
\newblock The prehistory of the {H}ardy inequality.
\newblock {\em Amer. Math. Monthly}, 113(8):715--732, 2006.

\bibitem[KMV{\etalchar{+}}17]{Killipetal2017}
Rowan Killip, Changxing Miao, Monica Visan, Junyong Zhang, and Jiqiang Zheng.
\newblock The energy-critical {NLS} with inverse-square potential.
\newblock {\em Discrete Contin. Dyn. Syst.}, 37(7):3831--3866, 2017.

\bibitem[KMV{\etalchar{+}}18]{Killipetal2018}
R.~Killip, C.~Miao, M.~Visan, J.~Zhang, and J.~Zheng.
\newblock Sobolev spaces adapted to the {S}chr\"odinger operator with
  inverse-square potential.
\newblock {\em Math. Z.}, 288(3-4):1273--1298, 2018.

\bibitem[KMVZ17]{Killipetal2017T}
Rowan Killip, Jason Murphy, Monica Visan, and Jiqiang Zheng.
\newblock The focusing cubic {NLS} with inverse-square potential in three space
  dimensions.
\newblock {\em Differential Integral Equations}, 30(3-4):161--206, 2017.

\bibitem[KPS81]{Kovalenkoetal1981}
V.~F. Kovalenko, M.~A. Perelmuter, and Ya.~A. Semenov.
\newblock Schr\"odinger operators with ${L}_w^{l/2}(\mathbb{R}^l)$-potentials.
\newblock {\em J. Math. Phys.}, 22:1033--1044, 1981.

\bibitem[KVZ16]{Killipetal2016}
Rowan Killip, Monica Visan, and Xiaoyi Zhang.
\newblock Riesz transforms outside a convex obstacle.
\newblock {\em Int. Math. Res. Not. IMRN}, (19):5875--5921, 2016.

\bibitem[Kwa11]{Kwasnicki2011}
Mateusz Kwa\'{s}nicki.
\newblock Spectral analysis of subordinate {B}rownian motions on the half-line.
\newblock {\em Studia Math.}, 206(3):211--271, 2011.

\bibitem[Kwa19]{Kwasnicki2019}
Mateusz Kwa\'{s}nicki.
\newblock Fractional {L}aplace operator and its properties.
\newblock In {\em Handbook of fractional calculus with applications. {V}ol. 1},
  pages 159--193. De Gruyter, Berlin, 2019.

\bibitem[Mer21]{Merz2021}
Konstantin Merz.
\newblock On scales of {S}obolev spaces associated to generalized {H}ardy
  operators.
\newblock {\em Math. Z.}, 299(1):101--121, 2021.

\bibitem[Mer22]{Merz2022}
Konstantin Merz.
\newblock On complex-time heat kernels of fractional {S}chr{\"o}dinger
  operators via {P}hragm{\'e}n-{L}indel{\"o}f principle.
\newblock {\em J. Evol. Equ.}, 22(3):Paper No. 62, 30, 2022.

\bibitem[OK90]{OpicKufner1990}
B.~Opic and A.~Kufner.
\newblock {\em Hardy-type inequalities}, volume 219 of {\em Pitman Research
  Notes in Mathematics Series}.
\newblock Longman Scientific \& Technical, Harlow, 1990.

\bibitem[ROS14]{RosOtonSerra2014}
Xavier Ros-Oton and Joaquim Serra.
\newblock The {D}irichlet problem for the fractional {L}aplacian: regularity up
  to the boundary.
\newblock {\em J. Math. Pures Appl. (9)}, 101(3):275--302, 2014.

\bibitem[SWW22]{Songetal2022}
Renming {Song}, Peixue {Wu}, and Shukun {Wu}.
\newblock {Heat kernel estimates for non-local operator with multisingular
  critical killing}.
\newblock {\em arXiv e-prints}, page arXiv:2203.03891, March 2022.

\bibitem[Yaf99]{Yafaev1999}
D.~Yafaev.
\newblock Sharp constants in the {H}ardy-{R}ellich inequalities.
\newblock {\em Journ. Functional Analysis}, 168(1):212--144, October 1999.

\end{thebibliography}

\newcommand{\etalchar}[1]{$^{#1}$}
\def\cprime{$'$}

\end{document}